\theoremstyle{plain}
\newtheorem{thm0}{Theorem}
\newtheorem*{thm1}{Theorem}
\newtheorem*{prop0}{Proposition~\ref{prop-suma}}
\newtheorem{thm}{Theorem}[section]
\newtheorem{lem}[thm]{Lemma}
\newtheorem{prop}[thm]{Proposition} 
\newtheorem{cor}[thm]{Corollary}
\theoremstyle{remark}
\newtheorem{rem}[thm]{Remark}
\theoremstyle{definition}
\newtheorem{dfn}[thm]{Definition} 
\newtheorem{example}[thm]{Example}
\newtheorem{step}{Step}
\newtheorem*{aspt}{Assumption}
\newtheorem{para}[thm]{}
\newcommand{\cD}{\mathcal{D}}
\newcommand{\cO}{\mathcal{O}}
\newcommand{\cF}{\mathcal{F}}
\newcommand{\cL}{\mathcal{L}}
\newcommand{\cV}{\Sigma}
\newcommand{\bc}{\mathbb{C}}
\newcommand{\CC}{\mathbb{C}}
\newcommand{\GG}{\mathbb{G}}
\newcommand{\bk}{\mathbb{K}}
\newcommand{\bn}{\mathbb{N}}
\newcommand{\bff}{\mathbb{F}}
\newcommand{\bq}{\mathbb{Q}}
\newcommand{\bz}{\mathbb{Z}}
\newcommand{\ZZ}{\mathbb{Z}}
\newcommand{\br}{\mathbb{R}}
\newcommand{\RR}{\mathbb{R}}
\newcommand{\bp}{\mathbb{P}}
\newcommand{\bt}{\mathbb{T}}
\newcommand{\one}{\mathbf{1}}
\DeclareMathOperator{\Hom}{Hom}
\DeclareMathOperator{\gl}{GL}
\DeclareMathOperator{\tors}{Tors}
\DeclareMathOperator{\sing}{Sing}
\DeclareMathOperator{\rk}{Rank}
\DeclareMathOperator{\lcm}{lcm}
\DeclareMathOperator{\Sh}{\operatorname{Shd}}
\DeclareMathOperator{\Char}{\operatorname{Char}}
\numberwithin{equation}{section}
\newcommand\enet[1]{\renewcommand\theenumi{#1}
\renewcommand\labelenumi{\theenumi}}
\title{Characteristic varieties of quasi-projective manifolds and orbifolds}
\author[E.~Artal]{Enrique Artal Bartolo}
\author[J.I.~Cogolludo]{Jos\'e Ignacio Cogolludo-Agust{\'\i}n}
\address{Departamento de Matem\'aticas, IUMA, Facultad de Ciencias\\
Universidad de Zaragoza\\
c/ Pedro Cerbuna 12\\
E-50009 Zaragoza SPAIN}
\email{artal@unizar.es,jicogo@unizar.es}
\author[D.~Matei]{Daniel Matei}
\address{Institute of Mathematics of the Romanian Academy\\ 
P.O. Box 1-764\\ RO-014700 Buch\-arest\\ Romania} 
\email{Daniel.Matei@imar.ro}
\thanks{Partially supported by the Spanish Ministry of Education
MTM2010-21740-C02-02. The third author is also partially supported
by grant CNCSIS PNII-IDEI 1188/2008 and FMI 53/10 (Gobierno de Arag{\'o}n).}
\subjclass[2000]{58K65, 32S20, 32S50 (Primary), 14B05, 14H30, 14H50 (Secondary)}
\begin{document}

\begin{abstract}
The present paper considers the structure of the space of characters of quasi-projective manifolds.
Such a space is stratified by the cohomology support loci of rank one local systems called characteristic 
varieties. The classical structure theorem of characteristic varieties is due to Arapura and it
exhibits the positive dimensional irreducible components as pull-backs obtained from morphisms onto
complex curves. 

In this paper a different approach is provided, using morphisms onto orbicurves, 
which accounts also for zero-dimensional components and gives more precise information on the positive
dimensional characteristic varieties. In the course of proving this orbifold version of 
Arapura's structure theorem, a gap in his proof is completed. As an illustration 
of the benefits of the orbifold approach, new obstructions for a group to be the fundamental group of 
a quasi-projective manifold are obtained.

%
\end{abstract}

\maketitle

\section*{Introduction}

The framework of this paper is the study of properties of fundamental groups of complements
of hypersurfaces in a projective space, or more generally, of smooth quasi-projective 
varieties. The approach we take is a classical one, namely to relate cohomological 
invariants of the variety (or its fundamental group) to its fibrations over a smooth curve, 
sometimes referred as pencils. This strong relationship has a long history, going back to 
Castelnuovo and de Franchis, see~\cite{cat:91}. The cohomological invariants we consider are 
the jumping loci of twisted cohomology of rank one local systems on the variety. The most 
general structure theorem for these loci was discovered by Arapura, who described them in 
terms of fibrations over curves. 

We propose here a different approach to obtain another structure theorem, 
where the base curve of the fibration is viewed as an orbifold. 
The language of orbifolds allows us to improve Arapura's description, and also to extract
finer quasi-projectivity obstructions. Our main goal is to prove the following result: 

\begin{thm0}\label{thmprin}
Let $X$ be a smooth quasi-projective variety and let $\cV_k(X)$ be the $k$-th characteristic 
variety of $X$. Let $V$ be an irreducible component of $\cV_k(X)$. 
Then one of the two following statements holds:
\begin{enumerate}
\enet{\rm(\arabic{enumi})}
\item\label{thmprin-orb} 
There exists an orbifold $C_\varphi$ supported by a smooth algebraic curve $C$, 
a surjective orbifold morphism $f:X\to C_\varphi$ and 
an irreducible component $W$ of $\cV_k(\pi_1^{\text{\rm orb}}(C_\varphi))$ 
such that $V=f^*(W)$.
\item\label{thmprin-tors} $V$ is an isolated torsion point not of type~\ref{thmprin-orb}.
\end{enumerate}
\end{thm0}

The characteristic varieties of a space depend only on its fundamental group
and can be seen as a generalization of the Alexander polynomial. They can be defined in terms of jumping
loci of the cohomology of local systems. These invariants have been extensively studied from different
perspectives. They are closely related with the Green-Lazarsfeld invariants~\cite{grlz:87} and with 
the Bieri-Neumann-Strebel invariants~\cite{bns} of groups and spaces.

In this context, the study of the geometry of smooth quasi-projective varieties in terms
of fibrations onto Riemann surfaces has proved to be very fruitful as its widespread use 
shows, cf. the following contributions by Siu, Serrano, Beauville, Catanese, Simpson, Bauer,  
and Arapura~\cite{siu, srn:90, be, cat:91, Si2, ara:97}. 

This paper originated from our attempt to understand Arapura's work. 
In~\cite{ara:97} the following result is stated in Theorem V.1.6.

\begin{thm1}[Arapura~\cite{ara:97}]
\label{thm-arapura}
Let $V$ be an irreducible component of $\cV_1(X)$. We then have
\begin{enumerate}
\enet{\rm(\arabic{enumi})}
\item\label{thm-arapura-1}
If $\dim V>0$, then there exists a surjective morphism $f:X\to C$, onto a smooth 
algebraic curve $C$, and a torsion character $\tau$ such that $V=\tau f^*(H^1(C;\bc^*))$.
\item If $\dim V=0$, then $V$ is unitary.
\end{enumerate}
\end{thm1}

This theorem is a consequence of Proposition V.1.4 from~\cite{ara:97}. 
However, the proof of this proposition given by Arapura is not complete.
The key technical tool used there is Timmerscheidt's spectral sequence degeneracy 
result~\cite[Theorem~5.1]{timm:87} for unitary local systems $\xi$ on $X$. 
Note that the relevant $E_1$-terms are not associated with the divisor $\cD$ 
compactifying $X=\bar{X}\setminus\cD$, but rather with the subdivisor $\cD^{\xi}$ of 
$\cD$ consisting of those components along which $\xi$ has non-trivial monodromy.
The starting assumption of Arapura's proof, that one can just deal with a local
system having nontrivial local monodromy about all components of the divisor 
$\cD$ after replacing $X$ by $X^{\xi}=\bar{X}\setminus \cD^{\xi}$, cannot in fact be made. 
Indeed, as it can be seen in Example~\ref{ex-notram} below, the resulting local 
system may no longer be in $\cV_1(X^{\xi})$. 
Characters that do not ramify along all components of $\cD$ have been considered
in the context of complements to projective hypersurfaces (cf.~\cite{acc:pcps})
and they seem to be essentially different from those ramifying everywhere as
can be seen in the Hodge-theoretical characterization provided 
in~\cite[Theorem~5.1]{acl-depth} where a character is not ramified everywhere
iff it is of weight two. Different techniques than those coming from cohomological
properties of compact manifolds are hence required in order to deal with this phenomenon.


Our first goal is to fill the gap in Arapura's proof. This is done in Section~\ref{sec-char}
in the language of orbifold morphisms. Proposition~\ref{prop-iso} ensures the validity of
Arapura's statement in~\cite[Proposition V.1.4]{ara:97}, and Theorem~\ref{thm-pullback} is 
an extension of that statement to non-torsion characters. 
We also point out that Theorem~\ref{thmprin} works for any characteristic variety and not only for~$\cV_1$.

The second goal is to describe, in terms of orbifold morphisms, the 
\emph{translated components} appearing in Arapura's work, see also~\cite{Di1} for another approach.
Several papers in this direction can be found in the literature. In particular, Theorem~\ref{thmprin} 
can be considered as the quasi-projective version of Delzant's Theorem~\cite{delzant}. 

Orbifolds have also been used for the study of fundamental groups of smooth algebraic varieties in 
recent works by Corlette-Simpson~\cite{cs:08} and Campana~\cite{camp:07,camp:10}. 
In ~\cite{delzant} Delzant proved the exact compact K\"ahler manifold analogue of Theorem~\ref{thmprin}, which was extended to
compact K\"ahler orbifolds in~\cite{camp:09}.

The third goal is to rule out the existence of non-torsion isolated unitary points in the characteristic 
varieties of quasi-projective manifolds. In the compact projective (resp. K\"ahler) case this was known 
from Simpson~\cite{Si1} (resp. Campana~\cite{camp:01}). In the non-compact case it was proved by 
Libgober~\cite{li:09} for quasi-projective manifolds $X$ with $b_1(\bar{X})=0$. 
Dimca in~\cite{Di1} pointed out that this fact may be deduced from Budur's work~\cite{Bu} for general quasi-projective 
varieties. This can be done with a big amount of work, however, we present here a more direct proof of this fact.

The fourth and final goal is to derive properties of the fundamental groups of quasi-projective 
varieties from Theorem~\ref{thmprin}, which can be used as effective quasiprojectivity
obstructions. As a matter of example, 
we present here one of the more striking consequences of Theorem~\ref{thmprin}.
\begin{prop0}
Let $G$ be a quasi-projective group, and let $V_1$ and $V_2$ be two distinct irreducible components 
of $\cV_k(G)$, resp. $\cV_\ell(G)$. If $\xi\in V_1\cap V_2$ is a torsion point, then~$\xi\in\cV_{k+\ell}(G)$.
\end{prop0}
This result was used in~\cite{ACM-artin} to rule 
out certain families of groups as fundamental groups of quasi-projective manifolds. 
We collect other properties in Propositions~\ref{prop-obs1} and~\ref{prop-obs2} 
In this way, we recover and extend some of the properties found by Dimca, Papadima, and 
Suciu~\cite{dps:08,DPS4,Di1,Di3,Di4,DM} using Arapura's Theorem.

This paper was originally planned with another goal in mind: to prove that only components of 
type~\ref{thmprin-orb} could exist. This was in part justified by the heuristic fact that all the computed 
examples were of type~\ref{thmprin-orb} (included the isolated points), but one can also give results in 
this direction in the rational surface case, where characters of orders 2, 3, 4, and 6 are of 
type~\ref{thmprin-orb} for divisors with rational singularities (see~\cite{acl-depth,CL-prep}).

However, the first two authors have recently found an example (see~\cite{AC-prep}) of a quasi-projective 
surface whose isolated points of $\cV_1$ are of type~\ref{thmprin-tors}. 

The paper is organized as follows. In~\S\ref{sec-prelim} notations for quasi-projective varieties are set
and some of their properties are discussed. The concept of characteristic variety is introduced in terms 
of Betti numbers of the $1$-cohomology with values in local systems of coefficients. Some ways to compute 
this cohomology are sketched and we applied them in~\S\ref{sec-orb} 
to compute characteristic varieties of orbifolds; these computations are probably known for specialists
but they do not appear explicitly in the literature. For quasi-projective varieties, the main tool of computation comes from 
Deligne's work~\cite{del:70}, which is recalled in~\S\ref{sec-deligne}. Also in this section, some technical
results are proved. They are mainly contained, one way or another, in the work of Arapura~\cite{ara:97}, 
Timmerscheidt~\cite{timm:87}, and Beauville~\cite{be}. The purpose of this is two-fold: on one hand to 
strengthen the use of the \emph{unitary-holomorphic} decompositions of a character~\cite{ara:97,be}; on the 
other hand to prepare the ground for a more precise analysis of the Deligne decomposition of the $1$-cohomology 
for local systems of coefficients, which will be considered in~\S\ref{sec-char}.  
In~\S\ref{sec-inverse}
the Deligne's decomposition into holomorphic
and anti-holomorphic parts is analyzed for Riemann surfaces as a main ingredient for
the general decomposition. In~\S\ref{sec-char} this analysis
is carried over to higher-dimensional varieties. The main results are stated and proved in section~\S\ref{sec-char}. 
The key ingredient is Theorem~\ref{thm-pullback} which allows us to prove Theorem~\ref{thmprin} for 
$\cV_k$ for any~$k$, since it states that any element of the twisted cohomology for a quasi-projective 
group (as long as the character is not of torsion type) is obtained as the pull-back of an element of the 
twisted cohomology of the orbifold. The strategy to prove Theorem~\ref{thm-pullback} is to reduce
it to the holomorphically pure cohomology classes. To that end, we first investigate in 
Proposition~\ref{prop-iso} the relation between the anti-holomorphic parts associated with a character 
and to its conjugate character. Proposition~\ref{prop-ara-beau} is a generalization of~\cite[Proposition~V.1.3]{ara:97} 
to the orbifold case. The results of~\S\ref{sec-deligne} are then used to apply Arapura's method to characters which are 
non-torsion, not just non-unitary. These improvements 
allow us to deal with any non-torsion unitary characters using Delzant's approach. The proof of Proposition~\ref{prop-caso2} 
uses Levitt's interpretation~\cite{lev:94} of \emph{exceptional classes} and Simpson's main result in~\cite{Si2}
can be applied. A more direct approach using Delzant's way could be done if the result in~\cite{Si2} were 
generalized to the quasi-projective case. In~\S\ref{sec-torsion}, some improvements of the main theorem are
discussed for torsion characters. They are not included in the main theorem because the hypotheses are 
rather technical. This section also includes a number of applications which follow from Theorem~\ref{thmprin}
and~\S\ref{sec-char}. Finally, some examples illustrating the properties are shown in~\S\ref{sec-exam}. 
More examples, applying these techniques can also be found in~\cite{ACM-artin}.

\section{Preliminaries}\label{sec-prelim}

Let $X$ be a smooth quasi-projective variety. Using standard Lefschetz-Zariski theory, see~\cite{hamm:83},
since the invariants we are interested in only depend on 
$G:=\pi_1(X)$, $X$ will be assumed to be either a complex curve (a Riemann surface) or a complex surface. 
In any case there exists a smooth compact complex surface (or complex curve) $\bar{X}$ such that 
$X=\bar{X}\setminus\cD$, where $\cD$ is a normal crossing divisor. If necessary, additional blow-ups might be 
performed in order to obtain a more suitable~$\bar{X}$, which will be clear from the context. 

Characteristic varieties are invariants of finitely presented groups $G$, and they can be computed using
any connected topological space $X$ (having the homotopy type of a finite CW-complex) such that 
$G=\pi_1(X,x_0)$, $x_0\in X$ as follows. Let us denote $H:=H_1(X;\bz)=G/G'$.
Note that the space of characters on $G$ is a complex torus
\begin{equation}
\label{eq-torus}
\bt_G:=\Hom(G,\bc^*)=\Hom(H,\bc^*)=H^1(X;\bc^*).
\end{equation}
Given $\xi\in\bt_G$, the following local system $\bc_\xi$ of coefficients over $X$ can be constructed.
Let $\pi_{\text{\rm ab}}:\tilde{X}_{\text{\rm ab}}\to X$ be the universal abelian covering of $X$. The group $H$ acts freely 
(on the right) on $\tilde{X}_{\text{\rm ab}}$ by the 
deck transformations of the covering. The local system of coefficients $\bc_\xi$ is defined as 
the locally constant sheaf associated with:
\[
\pi_\xi:\tilde{X}_{\text{\rm ab}}\times_{H}\bc\to X
\text{ where }
\tilde{X}_{\text{\rm ab}}\times_{H}\bc:=
\left(
\tilde{X}_{\text{\rm ab}}\times\bc
\right)\Big/
(x,t)\sim(x^h,\xi(h^{-1}) t).
\]

\begin{dfn}
\label{def-char-var}
The $k$-th~\emph{characteristic variety}~of $G$ is the subvariety 
of $\bt_G$, defined by:
\[
\cV_{k}(G):=\{ \xi \in \bt_G\: |\:\dim H^1(X,\bc_{\xi}) \ge k \}, 
\]
where $H^1(X,\bc_{\xi})$ is classically called the 
\emph{twisted cohomology of $X$ with coefficients in the local system $\xi\in \bt_G$}.

It is also customary to use $\cV_{k}(X)$ for $\cV_{k}(G)$ whenever $\pi_1(X)=G$.
\end{dfn}

\begin{para}\textbf{Topological construction of $H^1(X;\bc_\xi)$.}\label{rem-cell}
By duality, we will concentrate our attention in describing the simpler object $H_1(X;\bc_\xi)$.
Let us suppose that $X$ is a finite CW-complex. 
Then, $\tilde{X}_{\text{\rm ab}}$ also inherits a CW-complex structure. 
Since $H$ is the group of automorphisms of~$\pi_{\text{\rm ab}}$, $H$ acts freely on 
the set of cells of $\tilde{X}$, and thus the chain complex $C_*(\tilde{X}_{\text{\rm ab}};\bc)$ 
becomes a free $\Lambda$-module of finite rank, 
where $\Lambda:=\bc[H]$ is the group algebra of $H$. Given a character~$\xi\in \bt_G$, $\bc$ acquires a structure 
of $\Lambda$-module $\bc_\xi$ (obtained by the evaluation of $\xi$ on the elements of $H$). 
The \emph{twisted chain complex} $C_*(X;\bc)^\xi:=C_*(\tilde{X}_{\text{\rm ab}};\bc)\otimes_\Lambda\bc_\xi$ 
is, as a vector space, isomorphic to the finite dimensional complex space $C_*(X;\bc)$ with twisted differential. This construction implies 
that $\cV_k(G)$ are algebraic subvarieties of $\bt_G$ defined over~$\bq$.

Moreover, 
\begin{equation*}
\cV_k(G)\setminus \one=\Char_{k}(H^1(\tilde X_{\text{\rm ab}}))\setminus \one
=\Char_{k+1}(\tilde H^1(\tilde X_{\text{\rm ab}}))\setminus \one\
\text{(cf.\cite[\S1.2]{ji:mams} or \cite{eh:97})},
\end{equation*}
where, if $M$ is a finitely generated $\Lambda$-module then $\Char_k(M)$ is the algebraic variety associated
with the annihilator of the module $\bigwedge^k M$.
Finally, a presentation matrix for the 
module $\tilde H^1(C_*(X;\bc)^\xi)$ is given by evaluation of the Fox matrix of $G$
via the character~$\xi$. This matrix is obtained using Fox calculus~\cite{fox:60}
and it will be extensively used in~\S\ref{sec-orb}.
\end{para}

\begin{example}
\label{ex-cw}
Let $G:=\langle x_1,\dots,x_n\mid R_1,\dots,R_s\rangle$ be a presentation of $G$ and let
$K$ be the CW-complex associated with the presentation. This CW-complex has one
$0$-cell, say $P$, $n$~$1$-cells denoted by $x_1,\dots,x_n$ such that the $1$-skeleton
is a wedge of $n$~circles, and $s$~$2$-cells~$R_1,\dots,R_s$ such that their attachments to the $1$-skeleton
are determined by the corresponding words. Let $\xi:G\to\bc^*$ be a character and
let us denote $t_i:=\xi(x_i)$. Then the twisted differential of complex $C_*(K;\bc)^\xi$ are defined as follows:
\begin{itemize}
\item $\partial_1(x_i)=(t_i-1) P$;
\item The map $\partial_2$ is determined after a map $\varphi$ defined on the free group in $x_1,\dots,x_n$ which is 
defined inductively:
\begin{itemize}
\item The image of the empty word by $\varphi$ is $0$.
\item For a word $w$, we have:
\begin{itemize}
\item $\varphi(x_i w):=x_i+ t_i \varphi(w)$.
\item $\varphi(x_i^{-1} w):=-t_i^{-1} x_i+ t_i^{-1} \varphi(w)$.
\end{itemize}
\end{itemize}
\end{itemize}
Note that $\varphi(x_i^k w)=\frac{t_i^k-1}{t_i-1} x_i+t_i^k \varphi(w)$.
\end{example}

\begin{para}\textbf{Algebraic construction of $H^1(X;\bc_\xi)$.}\label{alg-const}
There is another way to compute the cohomology $H^1(X;\bc_\xi)=H^1(G;\bc_\xi)$, which can be seen as 
the quotient of \emph{cocycles} by \emph{coboundaries}. A cocycle is a map $\alpha:G\to\bc$ such that
$\alpha(g h)=\alpha(g)+\xi(g)\alpha(h)$. Coboundaries are generated by the mapping $g\mapsto\xi(g)-1$.
A cocycle defines a representation $G\to\gl(2;\bc)$, 
$g\mapsto \left(\begin{smallmatrix}
\xi(g)&\alpha(g)\\
0&1
\end{smallmatrix}\right)$.
Note that the coboundary representation is reducible.
\end{para}

\begin{rem}
\label{rem-ses-torus}
Let $r:=\rk H$ and let $\tors_G$ be the torsion subgroup of $H=G/G'$. 
Then $\bt_G$ is an abelian complex Lie group with $|\tors_G|$ connected components (each one isomorphic 
to $(\bc^*)^r$) satisfying the following exact sequence:
\begin{equation}
\label{eq-ses-torus}
1\to \bt_G^\one\to\bt_G\to \Hom(\tors_G,\CC^*)\to 1,
\end{equation}
where 
$\bt_G^\one:=\Hom(H/\tors_G,\CC^*)$
is the connected component containing the trivial character $\one$ which is isomorphic to $(\bc^*)^r$
via the choice of a basis of the lattice $H/\tors_G$.
Given $\rho\in \Hom(\tors_G,\CC^*)$, we will refer to the component of $\bt_G$ whose image is
$\rho$ as $\bt_G^\rho$. Since the exact sequence~(\ref{eq-ses-torus}) splits, the elements $\rho$ can be 
considered in $\bt_G$, and $\bt_G^\rho$ can also be thought of as the only connected component of $\bt_G$ 
passing through $\rho$.
\end{rem}

When $X$ is a quasi-projective (or K\"ahler) manifold, the work of Deligne~\cite{del:70}
gives a way to compute the twisted cohomology in terms of geometric properties.
Let us give some details about this computation. Fix a projective manifold $\bar{X}$ 
such that $\cD:=\bar{X}\setminus X$ is a normal crossing divisor.

\begin{dfn}
Let $D$ be an irreducible component of $\cD$ and let $\xi\in H^1(X;\bc^*)$. We say that
$\xi$ \emph{does not ramify}, or \emph{has trivial monodromy}, along $D$ if $\xi(\mu_D)=1$,
where $\mu_D$ is a meridian of $D$. Otherwise, we say that $\xi$ \emph{ramifies} along $D$.
\end{dfn}

\begin{rem}
By a \emph{meridian} of $D$, we simply mean the boundary of a sufficiently small disk intersecting
$D$ transversally and only at one point. This means nothing but the 
boundary of a fiber of the tubular neighborhood (seen as sub-bundle of the normal fibered bundle) on the 
smooth part of $D$. Since the tubular neighborhood of the smooth part of $D$ in $\cD$ is connected,
all meridians of $D$ are 
homologically equivalent.
\end{rem}

\begin{rem}\label{rem-notram}
Let $\cD^\xi$ be the subdivisor of $\cD$ formed by the irreducible components $D$ of $\cD$ such
that $\xi$ ramifies along $D$. Let $X^\xi:=\bar{X}\setminus\cD^\xi$ and $G^\xi:=\pi_1(X^\xi)$. Despite
the notation, $X^\xi$ and $G^\xi$ depend on $\bar{X}$. Note that 
$\xi$ naturally determines an element $\xi_0\in\bt_{G^\xi}=H^1(X^\xi;\bc^*)$.
It is clear that $\xi_0\in\cV_1(X^\xi)$ implies that $\xi\in\cV_1(X)$, but note that the converse is not 
true in general as the following example shows. This stresses a common as well as subtle misconception 
when trying to study characters on $X$. Therefore, one cannot assume, changing $X$ by $X^\xi$, that a 
character $\xi$ ramifies along all irreducible components of $\cD$.
\end{rem}

\begin{example}\label{ex-notram}
Let $X:=\bp^1\setminus\{0,1,\infty\}$. The group $G:=\pi_1(X)$ is free of rank~$2$ (generated, e.g., by 
meridians of $0$ and $\infty$). The torus $\bt_G$ is identified with $(\bc^*)^2$ via the images of those 
meridians. It is not hard to prove that $\cV_1(X)=\bt_G$, see Proposition~\ref{charvar-libre}. Let us 
consider a character $\xi$ defined by $(z,z^{-1})$, $z\in\bc\setminus\{0,1\}$. In this case, the divisor 
$\cD$ is the set $\{0,1,\infty\}$ but the divisor $\cD^\xi$ is  $\{0,\infty\}$ since $\xi$ does not ramify 
at~$1$. Note that $X^\xi=\bc^*$, whose fundamental group is Abelian. Since $\xi_0$ is a non-trivial character,
$\xi_0\notin\cV_1(X^\xi)$.
\end{example}

\section{Orbifold groups and characteristic varieties}\label{sec-orb}

\begin{dfn}
An \emph{orbifold} $X_\varphi$ is a quasi-projective Riemann surface~$X$
with a function $\varphi:X\to\bn$ taking value~$1$ outside a finite
number of points.
\end{dfn}

We may think of a neighborhood of a point $P\in X_\varphi$ with $\varphi(P)$ as the quotient of a 
disk (centered at~$P$) by a rotation of angle $\frac{2\pi}{n}$. A loop around $P$ is considered to be 
trivial in $X_\varphi$ if its lifting bounds a disk. Following this idea, orbifold fundamental groups
can be defined as follows.

\begin{dfn}\label{dfn-group-orb}
For an orbifold $X_\varphi$, let $p_1,\dots,p_n\in X$ be the points such that
$m_j:=\varphi(p_j)>1$. Then, the \emph{orbifold fundamental group} of $X_\varphi$ is defined as
$$
\pi_1^{\text{\rm orb}}(X_\varphi):=\pi_1(X\setminus\{p_1,\dots,p_n\})/\langle\mu_j^{m_j}=1\rangle,
$$
where $\mu_j$ is a meridian of $p_j$. For simplicity, $X_\varphi$ might also be denotes by 
$X_{m_1,\dots,m_n}$ or~$X_{\bar m}$.
\end{dfn}

\begin{example}\label{ex-pres}
If $X$ is a compact surface of genus~$g$ and $\bar m=(m_1,\dots,m_n)$, then
\[
\GG^g_{\bar m}:=\pi_1^{\text{\rm orb}}(X_{\bar m})\!=\!\left\langle\!\!
a_1,\dots,a_g,b_1,\dots,b_g,\mu_1,\dots,\mu_n\left\vert
\prod_{i=1}^g [a_i,b_i]=\prod_{j=1}^n\mu_j, \underset{j=1,\dots,n}{\mu_j^{m_j}=1}\right.\right\rangle\!.
\]
If $X$ is not compact and $\pi_1(X)$ is free of rank~$r$, then
\[
\bff^r_{\bar m}:=\pi_1^{\text{\rm orb}}(X_{\bar m})=\left\langle
a_1,\dots,a_r,\mu_1,\dots,\mu_n\left\vert
\underset{j=1,\dots,n}{\mu_j^{m_j}=1}\right.\right\rangle.
\]
\end{example}

\begin{dfn}
Let $X_\varphi$ be an orbifold and $Y$ a smooth algebraic variety.
A dominant algebraic morphism $f:Y\to X$ defines an \emph{orbifold morphism} $Y\to X_\varphi$
if for all $p\in X$, the divisor $f^*(p)$ is a $\varphi(p)$-multiple. The orbifold~$X_\varphi$
is said to be \emph{maximal} (with respect to $f$) if no divisor $f^*(p)$ is $n$-multiple
for $n>\varphi(p)$.
\end{dfn}

\begin{rem}
\label{rem-maximal}
If $Y$ is a smooth algebraic variety, $X$ is a quasi-projective Riemann surface and $f:Y\to X$
is a dominant algebraic morphism, it is possible to define an orbifold structure $\varphi:X\to\bn$,
where if $p\in X$, $\varphi(p)$ is the $\gcd$ of the multiplicities of the irreducible components
of the divisor~$f^*(p)$. This structure is maximal if and only if $f$ is surjective.
\end{rem}

The following result is well known. Proofs can be found in~\cite{cko,AC-prep}.

\begin{prop}\label{prop-orb}
Let $f:Y\to X$ define an orbifold morphism $Y\to X_\varphi$. Then $f$ induces a morphism
$f_*:\pi_1(Y)\to\pi_1^{\text{\rm orb}}(X_\varphi)$. Moreover, if the generic fiber is connected, then
$f_*$ is surjective.
\end{prop}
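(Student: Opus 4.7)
The plan is to first construct $f_*$, then verify surjectivity. Let $S:=\{p_1,\dots,p_n\}\subset X$ be the points with $\varphi>1$, write $m_i:=\varphi(p_i)$, and set $X^{\circ}:=X\setminus S$, $D:=f^{-1}(S)$, and $Y^{\circ}:=Y\setminus D=f^{-1}(X^{\circ})$. The restricted morphism $f^{\circ}:Y^{\circ}\to X^{\circ}$ induces $(f^{\circ})_*:\pi_1(Y^{\circ})\to\pi_1(X^{\circ})$; composing with the canonical surjection $q:\pi_1(X^{\circ})\twoheadrightarrow\pi_1^{\text{\rm orb}}(X_\varphi)$ from Definition~\ref{dfn-group-orb} yields a homomorphism $\Phi:\pi_1(Y^{\circ})\to\pi_1^{\text{\rm orb}}(X_\varphi)$.

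Next I would show that $\Phi$ factors through $\pi_1(Y)$. Since $D$ is a (possibly singular) divisor in the smooth variety $Y$, a standard transversality argument shows that the kernel of the surjection $\pi_1(Y^{\circ})\twoheadrightarrow\pi_1(Y)$ is normally generated by meridians $\gamma_{ij}$ around the irreducible components $D_{ij}$ of $D$, so it suffices to check that $\Phi(\gamma_{ij})=1$ for each. Fix $p_i\in S$ and a component $D_{ij}$ of $f^{-1}(p_i)$, of multiplicity $k_{ij}$ in $f^*(p_i)$; the orbifold-morphism hypothesis gives $m_i\mid k_{ij}$. At a smooth point of $D_{ij}$ lying outside the other components of $D$, a local analytic change of coordinates puts $f$ in the normal form $(z_1,\dots,z_d)\mapsto z_1^{k_{ij}}$, whence $(f^{\circ})_*(\gamma_{ij})=\mu_i^{k_{ij}}$ with $\mu_i$ a meridian of $p_i$; since $\mu_i^{m_i}=1$ in $\pi_1^{\text{\rm orb}}(X_\varphi)$, this class is trivial. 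The induced map $f_*:\pi_1(Y)\to\pi_1^{\text{\rm orb}}(X_\varphi)$ is therefore well defined.

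For surjectivity under the connected generic fiber hypothesis I would combine two standard ingredients. First, $q$ is surjective by construction. Second, the connectedness of the generic fiber implies $(f^{\circ})_*:\pi_1(Y^{\circ})\to\pi_1(X^{\circ})$ is surjective: by generic smoothness and Ehresmann's theorem there is a Zariski open $U\subset X^{\circ}$ over which $f^{\circ}$ is a topologically locally trivial fibration with connected fiber, so the homotopy exact sequence yields $\pi_1(f^{-1}(U))\twoheadrightarrow\pi_1(U)$; combined with the standard fact that inclusions of smooth Zariski open subsets induce surjections on $\pi_1$, a short diagram chase gives the surjectivity of $(f^{\circ})_*$. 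Consequently $q\circ(f^{\circ})_*$ is surjective; since it factors as $\pi_1(Y^{\circ})\twoheadrightarrow\pi_1(Y)\to\pi_1^{\text{\rm orb}}(X_\varphi)$, the second arrow $f_*$ must be surjective too.

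The hardest point will be the local meridian calculation $(f^{\circ})_*(\gamma_{ij})=\mu_i^{k_{ij}}$. It requires locating a smooth point of $D_{ij}$ lying off both the other components of $D$ and the singular locus of $f$, and invoking the local analytic normal form $z_1^{k_{ij}}$ to trace a small transverse disk through $f$. Once this is in place, the factoring through $\pi_1(Y)$ and the surjectivity both reduce to formal diagram chases.
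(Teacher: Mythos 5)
Your proof is correct and is essentially the standard argument: the paper itself gives no proof of Proposition~\ref{prop-orb}, delegating it to~\cite{cko,AC-prep}, which proceed along the same lines you do (meridians of the components of $f^{-1}(p_i)$ map to conjugates of $\mu_i^{k_{ij}}$ with $\varphi(p_i)\mid k_{ij}$, hence die in $\pi_1^{\text{\rm orb}}(X_\varphi)$, and surjectivity follows from the connected generic fiber together with the fact that Zariski-open inclusions of smooth varieties are $\pi_1$-surjective). The only repair needed is in the fibration step: $f$ need not be proper, so Ehresmann's theorem does not apply directly; invoke instead the generic local triviality theorem of Thom--Verdier (every algebraic morphism is a topologically locally trivial fibration over a Zariski-dense open subset of the target), after which your diagram chase goes through unchanged.
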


Next we compute $\bt_\Pi$ for orbifold groups $\Pi$.

\begin{prop}\label{prop-tg}
If $\Pi=\bff^r_{\bar m}$, then $\bt_\Pi$ is given by the following short exact sequence
\[
1\to \bt_\Pi^\one=(\CC^*)^r \to \bt_\Pi\to\bigoplus_{j=1}^n C_{m_j}\to 1,
\]
(see Remark{~\rm\ref{rem-ses-torus}}) where $C_m$ is the cyclic multiplicative group of $m$-roots of unity.

If $\Pi=\GG^g_{\bar m}$, then $\bt_\Pi$ is given by a similar short exact sequence where the first 
term is $\bt_\Pi^\one=(\bc^*)^{2 g}$ and the last term is the cokernel of the natural mapping 
$C_{m}\to\bigoplus_{j=1}^n C_{m_j}$ where $m:=\lcm\{m_1,\dots,m_n\}$.

\end{prop}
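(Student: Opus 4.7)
The plan is to reduce the statement to a standard computation of abelianizations followed by dualization. Since $\bt_\Pi = \Hom(\Pi,\bc^*) = \Hom(H_1(\Pi;\bz),\bc^*)$, I would first compute $H_1(\Pi;\bz)$ from the presentations recalled in Example~\ref{ex-pres}, and then apply $\Hom(-,\bc^*)$, exploiting the fact that $\bc^*$ is divisible (hence injective in the category of abelian groups) so that this functor is exact on finitely generated abelian groups.

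For $\Pi = \bff^r_{\bar m}$, abelianizing the presentation makes all generators commute and retains only the torsion relations $m_j\mu_j=0$, so
\[
H_1(\Pi;\bz)\cong\bz^r\oplus\bigoplus_{j=1}^n\bz/m_j\bz.
\]
Dualizing directly yields the split short exact sequence
\[
1\to(\bc^*)^r\to\bt_\Pi\to\bigoplus_{j=1}^n C_{m_j}\to 1,
\]
with $\bt_\Pi^\one=(\bc^*)^r$ realized as the connected component of the trivial character.

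For $\Pi=\GG^g_{\bar m}$, the essential new ingredient is that the long relation $\prod[a_i,b_i]=\prod\mu_j$ abelianizes to $\sum_j\mu_j=0$, since commutators die in the abelianization. Combined with $m_j\mu_j=0$, this gives
\[
H_1(\Pi;\bz)=\bz^{2g}\oplus M,
\]
where $M$ is the cokernel of the diagonal map $\bz\to\bigoplus_j\bz/m_j$, $1\mapsto(1,\dots,1)$. Since this image element has order exactly $m=\lcm\{m_j\}$, the map factors through an injection $C_m\hookrightarrow\bigoplus_j C_{m_j}$, producing a short exact sequence
\[
0\to C_m\to\bigoplus_{j=1}^n C_{m_j}\to M\to 0.
\]
Applying $\Hom(-,\bc^*)$ to $\bz^{2g}\oplus M$ then gives $\bt_\Pi=(\bc^*)^{2g}\times\Hom(M,\bc^*)$, with $\bt_\Pi^\one=(\bc^*)^{2g}$ and finite component group $\Hom(M,\bc^*)$; Pontryagin self-duality of the finite abelian group $M$ identifies this last factor with $M$ itself, matching the stated cokernel.

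The computation is essentially bookkeeping and no step presents a real difficulty; the only mild subtlety is identifying the component group with the cokernel $M$ rather than with its Pontryagin dual description as $\ker\bigl(\bigoplus C_{m_j}\to C_m\bigr)$, which amounts to choosing compatible isomorphisms $C_N^\vee\cong C_N$ through a pairing. If desired, one can avoid invoking Pontryagin duality abstractly by exhibiting explicit generators for $M$ via elementary divisors of the relation matrix and dualizing term by term.
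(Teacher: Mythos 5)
Your proof is correct and follows essentially the same route as the paper, whose proof simply records the abelianizations $H=\ZZ^r\oplus C_{m_1}\oplus\dots\oplus C_{m_n}$ and $H=\ZZ^{2g}\oplus\bigl(\bigoplus_j C_{m_j}\bigr)/C_m$ and leaves the dualization implicit via Remark~\ref{rem-ses-torus}. You merely spell out the routine extra steps (exactness of $\Hom(-,\CC^*)$ by divisibility and the self-duality of the finite quotient), which is fine.
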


\begin{proof}
It is immediate from the fact that $H=\ZZ^r\oplus C_{m_1}\oplus \dots \oplus C_ {m_n}$ in the
first case and 
$$
H=\ZZ^{2g}\oplus \frac{C_{m_1}\oplus \dots \oplus C_ {m_n}}
{C_m}
$$ 
in the second case.
\end{proof}

Therefore, for orbifolds coming from Riemann surfaces, the components of $\bt_\Pi$ are 
parametrized by the $n$-tuples $\lambda=(\lambda_1,...,\lambda_n)$~of roots of unity 
$\lambda_j$ of order $m_j$ (resp. whose product is 1) if $X$ is non-compact (resp. compact).
Let $\bt_\Pi^\lambda$ denote the component of $\bt_\Pi$ determined by $\lambda$.


\begin{dfn}
The number~$\ell(\lambda)$ of non-trivial coordinates of $\lambda$ is called the \emph{length} of the 
component $\bt_\Pi^\lambda$ of $\bt_\Pi$. If $\xi\in\bt_\Pi^\lambda$, then this number is also called 
the \emph{length} of $\xi$ and it is denoted by~$\ell(\xi)$.
\end{dfn}

Note that there are components of any length~$\ell$, $0\leq\ell\leq n$ for $\bff^r_{\bar m}$, whereas 
this is not the case for $\GG^g_{\bar m}$. The arithmetic of $m_1,\dots,m_n$ imposes some conditions, 
for example, $\bt_{\GG^g_{\bar m}}$ cannot have components of length $1$ and if $m_i$ are pairwise 
coprime, then $\bt_{\GG^g_{\bar m}}$ cannot have components of length $2$.

\begin{dfn}
We define the \emph{$k$-th characteristic variety $\cV_k(X_\varphi)$ of the orbifold $X_\varphi$} as 
the $k$-th characteristic variety $\cV_k(\Pi)$ of its orbifold fundamental group.

Also, if $\xi \in \bt_\Pi$ is a character on $\Pi$, then $H^1(X_\varphi;\CC_\xi)$ will denote 
$H^1(\Pi;\CC_\xi)$.
\end{dfn}

We now compute $\cV_k(X_\varphi)$ for orbifolds $X_\varphi$.
In order to do so, we will follow Example~\ref{ex-cw} by considering the $CW$-complex $K$ associated 
with the presentation of $\Pi:=\pi_1^{\text{\rm orb}}(X_{\varphi})$ given in Example~\ref{ex-pres}.
First we consider the case $\Pi:=\bff^r_{\bar m}$.

\begin{prop}\label{charvar-libre}
Let us consider the group $\Pi:=\pi_1^{\text{\rm orb}}(X_{\varphi})=\bff^r_{\bar m}$. Then,
$$
\cV_k(X_{\varphi}) = 
\begin{cases}
\bt_\Pi&\text{ if } 1\leq k\leq r-1\\
\{\one\}\cup \bigcup \{\bt_\Pi^\lambda \mid {\ell(\lambda)\geq 1}\} & \text{ if } k=r\\
\bigcup\{\bt_\Pi^\lambda \mid {\ell(\lambda)\geq k-r+1}\} & \text{ if } r+1\leq k\leq r+n-1\\
\emptyset & \text{ if } k\geq r+n.
\end{cases}
$$
is a decomposition in irreducible components of $\cV_k(X_{\varphi})$.
\end{prop}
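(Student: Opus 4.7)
The plan is to compute $H^1(\Pi;\CC_\xi)$ directly from the CW-complex associated with the presentation
$$\bff^r_{\bar m}=\left\langle a_1,\dots,a_r,\mu_1,\dots,\mu_n\,\left\vert\,\mu_j^{m_j}=1\right.\right\rangle$$
following the recipe of Example~\ref{ex-cw}. Given $\xi\in\bt_\Pi$, set $\alpha_i:=\xi(a_i)\in\CC^*$ and $\lambda_j:=\xi(\mu_j)$, where $\lambda_j^{m_j}=1$. The twisted chain complex
$$\CC^n\xrightarrow{\ \partial_2\ }\CC^{r+n}\xrightarrow{\ \partial_1\ }\CC$$
is given by $\partial_1(a_i)=(\alpha_i-1)P$, $\partial_1(\mu_j)=(\lambda_j-1)P$, and by the Fox calculus rule applied to $\mu_j^{m_j}$,
$$\partial_2(R_j)=\tfrac{\lambda_j^{m_j}-1}{\lambda_j-1}\,\mu_j=\begin{cases} m_j\,\mu_j & \text{if }\lambda_j=1,\\ 0 & \text{if }\lambda_j\neq 1.\end{cases}$$

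The first step is to separate the columns of $\partial_2$ into two blocks according to whether $\lambda_j=1$, giving $\operatorname{rk}\partial_2=n-\ell(\xi)$ and an image supported on the coordinates $\mu_j$ with $\lambda_j=1$, hence automatically contained in $\ker\partial_1$. The second step is to compute $\operatorname{rk}\partial_1$: it equals $1$ precisely when some $\alpha_i\neq 1$ or some $\lambda_j\neq 1$, i.e.\ when $\xi\neq\one$, and equals $0$ when $\xi=\one$. Combining these computations,
$$\dim H_1(\Pi;\CC_\xi)=\dim\ker\partial_1-\operatorname{rk}\partial_2=\begin{cases} r & \text{if }\xi=\one,\\ r-1 & \text{if }\xi\in\bt_\Pi^{\one}\setminus\{\one\},\\ r+\ell(\xi)-1 & \text{if }\ell(\xi)\geq 1,\end{cases}$$
and the same dimension holds for $H^1$ by the universal coefficient theorem (over a field).

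The third step is to translate these dimension formulas into the stated description of $\cV_k(X_\varphi)=\{\xi:\dim H^1(X_\varphi;\CC_\xi)\geq k\}$. For $1\leq k\leq r-1$ every character satisfies the inequality, giving all of $\bt_\Pi$. For $k=r$, only $\one$ contributes from $\bt_\Pi^{\one}$ (since the remaining characters there drop to $r-1$), while every component $\bt_\Pi^\lambda$ with $\ell(\lambda)\geq 1$ contributes in full; this is precisely the origin of the \emph{isolated} point $\{\one\}$. For $r+1\leq k\leq r+n-1$ one needs $\ell(\xi)\geq k-r+1\geq 2$, so only the components of sufficient length survive. For $k\geq r+n$ even the maximal length $n$ yields $\dim H^1=r+n-1<k$, so $\cV_k=\emptyset$.

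The final step is to verify that the decomposition is into irreducibles: each $\bt_\Pi^\lambda\cong(\CC^*)^r$ is irreducible, they are pairwise disjoint by Proposition~\ref{prop-tg}, and in the case $k=r$ the isolated point $\{\one\}$ is not contained in any $\bt_\Pi^\lambda$ with $\ell(\lambda)\geq 1$ and hence is a separate irreducible component. There is no real technical obstacle here; the only subtlety, which must be handled carefully, is the case distinction $\xi=\one$ vs.\ $\xi\in\bt_\Pi^{\one}\setminus\{\one\}$ within the identity component, since it is what produces the isolated torsion (in fact trivial) point in $\cV_r$ and illustrates the phenomenon~\ref{thmprin-tors} of Theorem~\ref{thmprin}.
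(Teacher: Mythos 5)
Your proposal is correct and follows essentially the same route as the paper: evaluate the Fox matrix of the presentation via $\xi$ on the CW-complex of Example~\ref{ex-cw}, observe that the $j$-th column vanishes exactly when $\lambda_j\neq 1$ (since $\lambda_j^{m_j}=1$), and conclude $\dim H^1=r+\ell(\lambda)-1$ for $\xi\neq\one$ and $r$ for $\xi=\one$, from which the stratification follows. Your explicit check of irreducibility and of the case split inside $\bt_\Pi^{\one}$ only makes explicit what the paper leaves implicit.
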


\begin{proof}
First let us consider the case where $\xi\neq \one$. 
Let us consider the complex $C_*(K;\bc)^\xi$; since $\xi\neq\one$, then $\dim\ker\partial_1^\xi=n+r-1$.
The matrix~$M$ for $\partial_2^\xi$ is obtained using Fox calculus and evaluation by~$\xi$.
Let $\lambda:=(\lambda_1,\dots,\lambda_n)$ the $n$-tuple of roots of unity determining the
irreducible component of $\bt_\Pi$ containing~$\xi$.
It is easily seen that
$$
M:=\left(
\array{cccc}
\dfrac{\lambda_1^{m_1}-1}{\lambda_1-1} & 0 & \dots & 0 \\
0 & \dfrac{\lambda_2^{m_1}-1}{\lambda_2-1} & \dots & 0 \\
\vdots & \vdots &   \ddots & \vdots  \\
0 & 0 & \dots & \dfrac{\lambda_n^{m_n}-1}{\lambda_n-1}\\
\\
\hline\\
0 & 0& \dots & 0\\
\dots &\dots& \dots & \dots\\
0 & 0&\dots & 0
\endarray
\right)
\in M((n+r)\times n,\bc);
$$
since $\rk M=n-\ell(\lambda)$, we obtain that
$\dim H^1(K;\bc_{\xi})=r+\ell(\lambda)-1$. 
On the other side, $\dim H^1(K;\bc_\one)=\rk H_1(\Pi)=r$.
\end{proof}

Now we consider the case $\Pi:=\GG^g_{\bar m}$, $2 g+n\geq 2$ ($\GG^g_{\bar m}$ is trivial if $2 g+n< 2$). 
The following corrects a mistake in Delzant's statement~\cite[Proposition 4]{delzant}.

\begin{prop}\label{charvar-compacto}
Let us consider the group $\Pi:=\pi_1^{\text{\rm orb}}(X_{\varphi})=\GG^g_{\bar m}$, $2 g+n\geq 2$. 
Then,
$$
\cV_k(X_{\varphi}) = 
\begin{cases}
\bt_\Pi&\text{ if } 1\leq k\leq 2g-2\\
\{\one\}\cup\bigcup \{\bt_\Pi^\lambda \mid {\ell(\lambda)\geq 2}\} & \text{ if } 2g-1\leq k\leq 2g\\
\bigcup \{\bt_\Pi^\lambda \mid {\ell(\lambda)\geq k-2g+2}\}  & \text{ if } 2 g+1\leq k\leq 2g+n-2\\
\emptyset & \text{ if } k\geq 2g+n-1
\end{cases}
$$
is a decomposition in irreducible components of $\cV_k(X_{\varphi})$.
\end{prop}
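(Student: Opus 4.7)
The plan is to adapt, step for step, the Fox-calculus argument used in the proof of Proposition~\ref{charvar-libre}. Let $K$ be the CW-complex associated to the presentation of $\Pi=\GG^g_{\bar m}$ given in Example~\ref{ex-pres}: it has one $0$-cell, $2g+n$ one-cells corresponding to the generators $a_i,b_i,\mu_j$, and $n+1$ two-cells corresponding to the torsion relations $\mu_j^{m_j}$ and the genus relation $R=\prod_i[a_i,b_i]\cdot\prod_j\mu_j^{-1}$. For $\xi\in\bt_\Pi$ set $t_i:=\xi(a_i)$, $s_i:=\xi(b_i)$, $\lambda_j:=\xi(\mu_j)$; these are constrained by $\lambda_j^{m_j}=1$ together with $\prod_j\lambda_j=1$. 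I would then recover $\dim H^1(K;\bc_\xi)=\dim\ker\partial_1^\xi-\rk\partial_2^\xi$ via the twisted cellular chain complex of~\ref{rem-cell}.

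For $\xi\neq\one$ one has $\dim\ker\partial_1^\xi=2g+n-1$, exactly as in Proposition~\ref{charvar-libre}. The matrix of $\partial_2^\xi$ contains $n$ torsion columns of the form $\tfrac{\lambda_j^{m_j}-1}{\lambda_j-1}\,e_{\mu_j}$, together with a last column coming from $R$ which, by the rule of Example~\ref{ex-cw}, evaluates to
\[
\sum_{i=1}^{g}\bigl((1-s_i)\,e_{a_i}+(t_i-1)\,e_{b_i}\bigr)-\sum_{j=1}^{n}(\lambda_1\cdots\lambda_j)^{-1}\,e_{\mu_j}.
\]
The first $n$ columns span precisely the subspace generated by $\{e_{\mu_j}:\lambda_j=1\}$, contributing rank $n-\ell(\lambda)$. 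The key claim is that the last column is always independent of these, so $\rk\partial_2^\xi=n-\ell(\lambda)+1$. I would verify this by a short case split: if some $(t_i,s_i)\neq(1,1)$, the last column has a nonzero entry in the $(a_i,b_i)$-block, which is disjoint from the span of the torsion columns; otherwise $\xi$ is trivial on every $a_i,b_i$, so $\xi\neq\one$ forces $\ell(\lambda)\geq 1$, the constraint $\prod_j\lambda_j=1$ rules out $\ell(\lambda)=1$, and the nonzero coefficients $-(\lambda_1\cdots\lambda_j)^{-1}$ at the indices $j$ with $\lambda_j\neq 1$ still provide the required independence.

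Combining these inputs yields $\dim H^1(K;\bc_\xi)=2g+\ell(\lambda)-2$ whenever $\xi\neq\one$ (reading as $2g-2$ when $\ell(\lambda)=0$), while $\dim H^1(K;\bc_\one)=2g$. Reading these values against the condition $\dim H^1\geq k$ produces the four regimes in the statement; in particular, the isolated $\{\one\}$ that appears in the middle range $2g-1\leq k\leq 2g$ arises because the remaining characters on the identity component of $\bt_\Pi$ drop to the strictly smaller dimension $2g-2$, and no components of length $1$ exist. The main obstacle, and presumably the source of the inaccuracy in Delzant's formulation, is the case in which $\xi$ is trivial on the hyperbolic generators but $\ell(\lambda)\geq 2$: here one must use the explicit Fox derivative of $R$ to see that the genus relation still bumps the rank of $\partial_2^\xi$ by one, which is precisely what separates $\{\one\}$ from the rest of its component.
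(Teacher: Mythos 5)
Your proposal is correct and follows essentially the same route as the paper: the twisted chain complex of the presentation CW-complex, Fox calculus giving the $n$ torsion columns of rank $n-\ell(\lambda)$ plus the genus-relation column, the independence of that last column for $\xi\neq\one$ yielding $\rk\partial_2^\xi=n+1-\ell(\lambda)$ and hence $\dim H^1=2g+\ell(\lambda)-2$, together with the observation that $\ell(\lambda)=1$ cannot occur. The only differences are cosmetic (a different but equivalent sign/ordering convention in the relator's Fox derivative, and an explicit case split for the independence step that the paper leaves implicit).
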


\begin{proof}
If $\xi=\one$, $\dim H^1(K;\bc_\one)=\rk H_1(\Pi)=2 g$. Let us assume $\xi\neq\one$ and we assume
the notations of Example~\ref{ex-pres} and Proposition~\ref{charvar-libre}:
$\lambda=(\lambda_1,\dots,\lambda_n)$ where $\lambda_i:=\xi(\mu_i)$; we denote also $x_i:=\xi(a_i)$ and $y_i:=\xi(b_i)$.
We have $\dim\ker\partial_1^\xi=2 g+n-1$ and we obtain the matrix~$M$ for $\partial_2^\xi$ using Fox calculus
and evaluation by~$\xi$:
$$
M:=\left(
\array{ccccc}
\dfrac{\lambda_1^{m_1}-1}{\lambda_1-1} & 0 & \dots & 0 &1  \\
0 & \dfrac{\lambda_2^{m_1}-1}{\lambda_2-1} & \dots & 0 & \lambda_1\\
\vdots & \vdots &   \vdots & 0  & \vdots\\ 
0 & 0 & \dots & \dfrac{\lambda_n^{m_n}-1}{\lambda_n-1}& \lambda_1\cdots \lambda_{n-1}\\
\\
\hline\\
0 & 0& \dots & 0& y_1-1\\
0 & 0& \dots & 0& 1-x_1\\
\dots &\dots& \dots & \dots & \dots\\
0 & 0&\dots & 0& y_g-1\\
0 & 0&\dots & 0& 1-x_g
\endarray
\right)
\in M((n+2 g)\times (n+1),\bc);
$$
since $\rk M=n+1-\ell(\lambda)$, we obtain that
$\dim H^1(K;\bc_{\xi})=2 g+\ell(\lambda)-2$. The result follows since
the case $\ell(\lambda)=1$ cannot arise.
\end{proof}

As an immediate corollary of Propositions~\ref{charvar-libre} and \ref{charvar-compacto}, 
the twisted cohomologies of a character, its inverse, and its conjugate can be related as follows.

\begin{prop}\label{prop-inverse-riemann}
Let $X_\varphi$ be an orbifold, let $\Pi:=\pi_1^{\text{\rm orb}}(X_\varphi)$,
and let $\xi\in\bt_\Pi$, then 
$$\dim H^1(\Pi;\CC_\xi)=\dim H^1(\Pi;\CC_{\xi^{-1}})=\dim H^1(\Pi;\CC_{\bar \xi}).$$
\end{prop}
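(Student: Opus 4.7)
The plan is to reduce everything to the explicit formulas obtained in Propositions~\ref{charvar-libre} and~\ref{charvar-compacto}. Those propositions show that, for a character $\xi\in\bt_\Pi$, the number $\dim H^1(\Pi;\CC_\xi)$ depends only on two pieces of data: whether $\xi=\one$ (in which case the dimension equals $\rk H_1(\Pi)$, i.e.\ $r$ or $2g$), and otherwise the length $\ell(\xi)$, through the formulas
\[
\dim H^1(\Pi;\CC_\xi)=r+\ell(\xi)-1\quad\text{or}\quad 2g+\ell(\xi)-2,
\]
in the free and compact cases respectively. So the entire statement will follow once it is checked that the operations $\xi\mapsto \xi^{-1}$ and $\xi\mapsto\bar\xi$ both fix the trivial character and preserve the length.

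The first point is immediate since $\one^{-1}=\one$ and $\bar\one=\one$. For the second point, the key observation is that the coordinates that contribute to the length are the values $\lambda_j:=\xi(\mu_j)$, which are $m_j$-th roots of unity by the defining relations $\mu_j^{m_j}=1$ in $\pi_1^{\text{\rm orb}}(X_\varphi)$. In particular, $|\lambda_j|=1$, so $\bar\lambda_j=\lambda_j^{-1}$. Since a root of unity equals $1$ if and only if its inverse does, we see that
\[
\lambda_j=1\iff \lambda_j^{-1}=1\iff \bar\lambda_j=1,
\]
which gives $\ell(\xi)=\ell(\xi^{-1})=\ell(\bar\xi)$.

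Combining these two observations with the formulas above yields the desired equalities. The only mild subtlety is that, in $\Pi=\GG^g_{\bar m}$, the length $\ell(\xi)=1$ cannot occur (by the relation $\prod\mu_j=\prod[a_i,b_i]$ translated to characters), but this constraint is manifestly symmetric under $\xi\mapsto\xi^{-1}$ and $\xi\mapsto\bar\xi$, so it causes no trouble. No serious obstacle is expected; the statement is essentially a symmetry observation about the explicit formulas already established.
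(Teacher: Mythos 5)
Your proposal is correct and follows essentially the same route as the paper: both reduce to the dimension formulas of Propositions~\ref{charvar-libre} and~\ref{charvar-compacto} (equivalently $\dim H^1=\ell(\lambda)-\chi(X)$ for $\xi\neq\one$) and observe that $\xi\mapsto\xi^{-1}$ and $\xi\mapsto\bar\xi$ fix $\one$ and preserve the length, since the coordinates $\lambda_j$ are roots of unity.
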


\begin{proof}
In the proofs of Propositions~\ref{charvar-libre} and \ref{charvar-compacto} it was 
computed that 
$$
\dim H^1(\Pi;\CC_\xi) = 
\begin{cases}
\rk H_1(\Pi)&\text{ if } \xi=\one\\
\ell(\lambda)-\chi(X) & \text{ if } \xi\in\bt_\Pi^\lambda.
\end{cases}
$$
Clearly, if $\xi\in\bt_\Pi^\lambda$ then $\xi^{-1}\in\bt_\Pi^{\lambda^{-1}}$ 
and $\bar\xi\in\bt_\Pi^{\bar\lambda}$. Moreover $\ell(\lambda)=\ell(\lambda^{-1})=\ell(\bar\lambda)$,
and the statement follows.
\end{proof}

As a consequence of Propositions~\ref{charvar-libre} and~\ref{charvar-compacto}
the twisted cohomology of an orbifold 
$X_\varphi$ can be identified with the twisted cohomology of a Riemann surface. 

\begin{prop}\label{prop-orbi-riemann}
Let $X_\varphi$ be an orbifold with singular points $p_1,\dots,p_n$ and orbifold 
fundamental group $\Pi:=\pi_1^{\text{\rm orb}}(X_\varphi)$.
For $\xi\in\bt_\Pi$, 
set $Y:=X\setminus\{p_j\in X\mid \xi(\mu_j)\neq 1\}$. Let $\xi_1$ be the
character on $Y$ determined by~$\xi$. Then
$H^1(X_\varphi;\bc_\xi)$ is naturally identified with~$H^1(Y;\bc_{\xi_1})$.
\end{prop}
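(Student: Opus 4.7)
The plan is to exhibit both $H^1(X_\varphi;\bc_\xi)$ and $H^1(Y;\bc_{\xi_1})$ as the same subquotient of the space of cocycles on $\pi_1(W)$, where $W:=X\setminus\{p_1,\dots,p_n\}$. First I would set up the two natural surjections $q_\Pi:\pi_1(W)\twoheadrightarrow\Pi$ and $q_Y:\pi_1(W)\twoheadrightarrow\pi_1(Y)$: the former kills the relations $\mu_j^{m_j}$ for every $j$, while the latter kills $\mu_j$ for each $j$ in the index set $I:=\{j\mid \xi(\mu_j)=1\}$ (these are precisely the meridians around the points \emph{filled in} when passing from $W$ to $Y$). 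Both $\xi$ and $\xi_1$ pull back along the respective surjections to a common character $\tilde\xi$ on $\pi_1(W)$.

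Next, using the algebraic description of $H^1$ recalled in~\ref{alg-const}, I would observe that for a quotient $G/N$ with $\tilde\xi|_N\equiv 1$, the $\xi$-cocycles on $G/N$ are exactly the $\tilde\xi$-cocycles on $G$ that vanish on $N$. The cocycle identity
\[
\alpha(g x g^{-1}) = (1-\tilde\xi(x))\alpha(g) + \tilde\xi(g)\alpha(x)
\]
shows that if $\tilde\xi(x)=1$ and $\alpha(x)=0$ then $\alpha$ vanishes on the entire normal closure of $x$ (induction on word length using $\alpha(xy)=\alpha(x)+\tilde\xi(x)\alpha(y)$). Consequently, the descent condition to either quotient reduces to checking the cocycle at the explicit relators themselves.

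The core computation is then
\[
\alpha(\mu_j^{m_j}) = \Bigl(\sum_{k=0}^{m_j-1}\tilde\xi(\mu_j)^{k}\Bigr)\alpha(\mu_j).
\]
For $j\notin I$ the geometric sum equals $(\tilde\xi(\mu_j)^{m_j}-1)/(\tilde\xi(\mu_j)-1)=0$, since $\tilde\xi(\mu_j)^{m_j}=1$ because $\tilde\xi$ factors through $\Pi$; hence the relation $\alpha(\mu_j^{m_j})=0$ is automatic. For $j\in I$ it equals $m_j\neq 0$, so the relation is equivalent to $\alpha(\mu_j)=0$. Thus the descent condition to $\Pi$ collapses to the requirement ``$\alpha(\mu_j)=0$ for every $j\in I$'', which is precisely the descent condition to $\pi_1(Y)$. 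Hence $Z^1(\Pi;\bc_\xi)=Z^1(\pi_1(Y);\bc_{\xi_1})$ as subspaces of $Z^1(\pi_1(W);\bc_{\tilde\xi})$, and the coboundaries, being of the common form $g\mapsto c(\tilde\xi(g)-1)$, match under this identification, yielding the natural isomorphism. I expect the only point requiring real care to be the normal-closure-to-relators reduction above; everything else is a direct calculation.
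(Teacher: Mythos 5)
Your argument is correct, but it proceeds by a different route than the paper. You work entirely at the cocycle level: both $\Pi$ and $\pi_1(Y)$ are quotients of $\pi_1(W)$, $W:=X\setminus\{p_1,\dots,p_n\}$, and you show that the two descent conditions on a $\tilde\xi$-cocycle of $\pi_1(W)$ coincide, because $\alpha(\mu_j^{m_j})=\bigl(\sum_{k=0}^{m_j-1}\tilde\xi(\mu_j)^k\bigr)\alpha(\mu_j)$ vanishes automatically when $\xi(\mu_j)\neq 1$ (as $\xi(\mu_j)^{m_j}=1$) and is equivalent to $\alpha(\mu_j)=0$ when $\xi(\mu_j)=1$; the reduction from normal closures to the relators via the conjugation identity is exactly the point that needs care, and you handle it correctly (note both families of relators lie in $\ker\tilde\xi$, so the reduction applies). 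The paper instead deduces the statement from the dimension formulas already computed in Propositions~\ref{charvar-libre} and~\ref{charvar-compacto}: it compares $\dim H^1(X_\varphi;\bc_\xi)=\ell-\chi(X)$ with $\dim H^1(Y;\bc_{\xi_1})=-\chi(Y)$ using $\chi(Y)=\chi(X)-\ell$ and $\rk H_1(Y)=\rk H_1(\Pi)+\ell$, and then asserts that the natural map is an isomorphism. Your approach is more self-contained (it does not use the Fox-calculus computations) and it produces the canonical identification directly, in effect supplying the injectivity that the paper's dimension count leaves implicit; the paper's proof is shorter given that the dimension formulas are already available. Both identify $H^1$ of the spaces with group cohomology in degree one, as the paper does in~\ref{alg-const}, so no gap arises there.
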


\begin{proof}
The particular cases of Propositions~\ref{charvar-libre} and \ref{charvar-compacto} 
where no orbifold points are present give the dimensions of the twisted cohomology 
groups of a Riemann surface $Y$: 
$$
\dim H^1(Y;\CC_\xi) = 
\begin{cases}
\rk H_1(Y)&\text{ if } \xi=\one\\
-\chi(Y)  & \text{ if } \xi\neq\one.
\end{cases}
$$

Now fix $J$ a subset of $\{1,\dots,n\}$ of size $l$ and suppose $\xi$ is such that 
$\xi(\mu_j)\neq 1$ precisely when $j\in J$. Then $\xi\in\bt_\Pi^\lambda$ and $\ell(\lambda)=\ell$.
We then have that 
$$
\dim H^1(X_\varphi;\bc_\xi) = 
\begin{cases}
\rk H_1(\Pi)&\text{ if } \xi=\one\\
\ell-\chi(X) & \text{ if } \xi\neq\one.
\end{cases} 
$$
Clearly $Y=X\setminus\{p_j \mid j\in J\}$ is a Riemann surface of Euler characteristic
$\chi(Y)=\chi(X)-\ell$ and first Betti number $\rk H_1(Y)=\rk H_1(X)+\ell=\rk H_1(\Pi)+\ell$.
It follows that $\dim H^1(Y;\CC_\xi)=\dim H^1(X_\varphi;\bc_\xi)$,
and so the restriction $H^1(Y;\CC_\xi)\to H^1(X_\varphi;\bc_\xi)$ is an isomorphism.
\end{proof}

\section{Deligne's theory and Hodge-like decompositions}\label{sec-deligne}

In what follows, we briefly summarize Deligne's results~\cite{del:70} (with some addenda by Timmerscheidt~\cite{timm:87})
on twisted cohomology of quasi-projective varieties. Let $\xi\in\bt_G$, $G:=\pi_1(X)$. Consider
the line bundle $L_\xi:=\bc_\xi\otimes\cO_X$ over~$X$. The local system of coefficients $\bc_\xi$ induces a 
flat connection $\nabla$ on $L_\xi$. Let us fix $\bar{L}_\xi$ an extension of $L_\xi$ to $\bar{X}$, whose 
associated flat connection~$\bar{\nabla}$ is meromorphic (having log poles along $\cD$) and extends~$\nabla$. Note that plenty of such extensions 
are possible (there is a choice of a logarithm determination around every component $D$ of $\cD$). More precisely,
fix an irreducible component $D$ of $\cD$ and let $p\in D\setminus\sing(\cD)$. Let $u,v$ be a local analytical 
system of coordinates centered at~$p$ such that~$v=0$ is the local equation of $D$. Let $\mu_D$ be a meridian of 
$D$ and let $\xi(\mu_D)=:t$. The extension $\bar{\nabla}$ to $D$ is determined by the choice of $\alpha\in\bc$
such that $\exp(2 \sqrt{-1} \pi\alpha)=t$, or equivalently, such that $v^\alpha$ is the equation of 
a~\emph{multivalued} flat section on a suitable chart of $\bar{L}_\xi$.

\begin{dfn}
We say that $\alpha$ is the \emph{residue} of the meromorphic extension $\bar{\nabla}$ around~$D$.
\end{dfn}

\begin{dfn}
An extension $\bar{L}_\xi$ as above is said to be~\emph{suitable} if 
the residues of $\bar{L}_\xi$ around the components of $\cD$ are not positive integers.
The \emph{Deligne's extension} of $(L_\xi,\nabla)$ is the unique holomorphic extension of $L_\xi$ whose
associated meromorphic flat connection (with log poles) 
is so that its residues around any component of $\cD$ have real parts in $[0,1)$. 
Such an extension will be denoted by~$\tilde{L}_\xi$.
\end{dfn}

The main result proved by Deligne in~\cite{del:70} states that, 
if $\bar{L}_\xi$ is suitable then the hypercohomology of the twisted complex of 
holomorphic sheaves of logarithmic forms with poles along $\cD$ 
(denoted by $\Omega_{\bar{X}}^\bullet(\log\cD)\otimes\bar{L}_\xi$) is isomorphic to the twisted cohomology 
of $X$ with coefficients in $\xi$, that is,
\begin{equation}
\label{eq-hypercohomology}
\mathbb H^i(\bar X;\Omega_{\bar{X}}^\bullet(\log\cD)\otimes\bar{L}_\xi)\cong H^i(X;\bc_\xi).
\end{equation}
This induces a decomposition $H^1(X;\bc_\xi)=H^{\cO}_\xi\oplus H^{\overline{\cO}}_\xi$, where
$H^{\cO}_\xi$ corresponds to the $(1,0)$-term and $H^{\overline{\cO}}_\xi$ corresponds to the $(0,1)$-term.
In a nutshell, Timmerscheidt~\cite{timm:87} showed that, in the case of Deligne's extensions of unitary bundles, 
the associated spectral sequence degenerates in the first step. Next, we will describe some of the main properties
derived from this result, some of which are particular to the Deligne's extension.

\begin{thm}[\cite{del:70,timm:87}]\label{prop-dlg}
The following properties hold:
\begin{enumerate}
\enet{\rm(\arabic{enumi})}
\item\label{prop-dlg-pz} If $\bar{L}_\xi$ is a suitable extension, then 
the space $H^{\cO}_\xi$ is the homology of the complex
\begin{equation}\label{spseq}
H^0(\bar{X};\bar{L}_\xi)\overset{\bar{\nabla}}{\longrightarrow} H^0(\bar{X};\Omega_{\bar{X}}^1(\log\cD)\otimes\bar{L}_\xi)
\overset{\bar{\nabla}}{\longrightarrow}H^0(\bar{X};\Omega_{\bar{X}}^2(\log\cD)\otimes\bar{L}_\xi)
\end{equation}
and $H^{\overline{\cO}}_\xi$ is the kernel of 
\begin{equation}\label{spseq1}
\bar{\nabla}:H^1(\bar{X};\bar{L}_\xi)\to H^1(\bar{X};\Omega_{\bar{X}}^1(\log\cD)\otimes\bar{L}_\xi). 
\end{equation}
\item\label{prop-dlg-unit} If $\xi$ is unitary and $\tilde{L}_\xi$ is the Deligne extension then 
$\bar{\nabla}=0$ in \eqref{spseq} and \eqref{spseq1}, i.e.,
$$
H^{\cO}_\xi=H^0(\bar{X};\Omega_{\bar{X}}^1(\log\cD)\otimes\tilde{L}_\xi), \quad
H^{\overline{\cO}}_\xi=H^1(\bar{X};\tilde{L}_\xi).
$$
\end{enumerate}
\end{thm}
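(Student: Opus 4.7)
The plan is to extract both parts from the hypercohomology identification~\eqref{eq-hypercohomology} via the standard Hodge-to-de~Rham spectral sequence. Equip the twisted logarithmic complex with its stupid filtration $F^p:=\Omega^{\geq p}_{\bar X}(\log\cD)\otimes\bar L_\xi$; the associated spectral sequence has
$$E_1^{p,q}=H^q(\bar X;\Omega^p_{\bar X}(\log\cD)\otimes\bar L_\xi),\qquad d_1=\bar\nabla,$$
and converges to $\mathbb H^{p+q}(\bar X;\Omega^\bullet_{\bar X}(\log\cD)\otimes\bar L_\xi)\cong H^{p+q}(X;\bc_\xi)$. By construction, $H^{\cO}_\xi$ and $H^{\overline{\cO}}_\xi$ are the graded pieces $E_\infty^{1,0}$ and $E_\infty^{0,1}$ of the induced filtration on $H^1$, so both statements reduce to controlling the higher differentials.

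For part~\ref{prop-dlg-pz}, the incoming and outgoing differentials at $E^{1,0}_r$ with $r\geq 2$ vanish for bidegree reasons (the sources $E_r^{1-r,r}$ and targets $E_r^{1+r,1-r}$ are zero), so $E_\infty^{1,0}=E_2^{1,0}$ is precisely the cohomology at the middle term of~\eqref{spseq}. The identification $H^{\overline{\cO}}_\xi=\ker\bigl(\bar\nabla:H^1(\bar X;\bar L_\xi)\to H^1(\bar X;\Omega^1_{\bar X}(\log\cD)\otimes\bar L_\xi)\bigr)$ requires showing that the single remaining outgoing differential $d_2:E_2^{0,1}\to E_2^{2,0}$ vanishes; this is the substance of Deligne's degeneration theorem for the logarithmic de~Rham complex with coefficients in a suitable extension, which relies precisely on the hypothesis that no residue is a positive integer, so that the trace-strictness arguments of~\cite{del:70} go through verbatim.

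For part~\ref{prop-dlg-unit}, I would follow Timmerscheidt's harmonic approach. The unitary character $\xi$ endows $L_\xi$ with a canonical flat unitary metric; one extends this metric to $\tilde L_\xi$ with controlled singularities along $\cD$ and verifies Kähler-type identities for the associated Hermitian connection on $\tilde L_\xi$. The resulting Laplacian is self-adjoint and elliptic on twisted logarithmic forms, so each cohomology class is represented by a harmonic form, simultaneously annihilated by $\partial$ and $\bar\partial$. Consequently $\bar\nabla$ acts as zero on both $H^0(\bar X;\Omega^p_{\bar X}(\log\cD)\otimes\tilde L_\xi)$ for $p=0,1,2$ and on $H^1(\bar X;\tilde L_\xi)$, which yields the two stated formulas.

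The main obstacle is the harmonic theory of part~\ref{prop-dlg-unit}: one must check that the Kähler identities and the Hodge decomposition survive in the presence of the logarithmic singularities of the metric along $\cD$, a delicate local analysis near the divisor. Unitarity of $\xi$ is exactly what keeps the metric on $\tilde L_\xi$ sufficiently tame along $\cD$ for the compact Kähler formalism to apply; without it, $E_1$-degeneration genuinely fails in general and one obtains only the weaker $E_2$-level statement of part~\ref{prop-dlg-pz}.
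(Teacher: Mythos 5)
The paper does not prove this statement at all: it is quoted from Deligne~\cite{del:70} and Timmerscheidt~\cite{timm:87}, with the surrounding text only recalling the comparison isomorphism~\eqref{eq-hypercohomology} and the fact that Timmerscheidt proves $E_1$-degeneration for Deligne extensions of \emph{unitary} local systems. Your spectral-sequence framework is the right one, and the formal part is fine: since $X$ has dimension at most two, the complex has three columns, and for bidegree reasons $E_\infty^{1,0}=E_2^{1,0}$, which is exactly the cohomology of~\eqref{spseq}. Your part~\ref{prop-dlg-unit} is likewise an acceptable sketch, but note that it amounts to re-stating the cited content of~\cite[Theorem~5.1]{timm:87}; the ``delicate local analysis near the divisor'' you defer is precisely Timmerscheidt's theorem, so nothing new is being proved there.

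The genuine gap is in your treatment of the second half of part~\ref{prop-dlg-pz}. You correctly isolate that identifying $H^{\overline{\cO}}_\xi$ with the full kernel of~\eqref{spseq1} requires the vanishing of $d_2\colon E_2^{0,1}\to E_2^{2,0}$, but your justification --- a ``degeneration theorem for the logarithmic de Rham complex with coefficients in a suitable extension'' with ``trace-strictness arguments'' from~\cite{del:70} --- does not exist in that reference. What the non-positive-integer residue hypothesis buys in~\cite{del:70} is only the quasi-isomorphism behind~\eqref{eq-hypercohomology}; Deligne proves no degeneration statement for arbitrary suitable extensions, and your own closing sentence (that $E_1$-degeneration ``genuinely fails'' without unitarity) shows you have not identified what actually forces $d_2^{0,1}=0$. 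In the setting the paper uses, this vanishing is not formal: it is obtained by taking the extension to be the Deligne extension $\tilde{L}_\psi$ of the unitary part of a unitary-holomorphic decomposition $\xi=\psi\exp(\omega)$, so that $\bar{\nabla}=\nabla_\omega$ acts on the $E_1$-terms as $\wedge\,\omega$ (Remark~\ref{rem-nabla}), and then invoking Timmerscheidt's $E_1$-degeneration for $\psi$ together with the $L^2$/harmonic arguments of Arapura~\cite[Proposition~V.1.4]{ara:97} (cf.\ the discussion after Theorem~\ref{prop-tm}). As written, the key non-formal step of part~\ref{prop-dlg-pz} rests on a misattributed result, so the proof is incomplete for non-unitary suitable extensions.
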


\begin{rem}\label{rem-mhs}
This decomposition is, in general, non-canonical. As in Theorem~\ref{prop-dlg}\ref{prop-dlg-unit}
more properties can be derived when $\xi$ is unitary. Following the ideas in~\cite[III-IV]{ara:97},
the decomposition  $H^1(X;\bc_\xi)=H^{\cO}_\xi\oplus H^{\overline{\cO}}_\xi$ is natural and carries
a mixed Hodge structure.
\end{rem}

\begin{dfn}
\label{def-pure}
An element
$0\neq\theta\in H^1(X;\bc_\xi)$ is said to be \emph{holomorphically} (resp. \emph{anti-holomorphically})
\emph{pure} if $\theta\in H^{\cO}_\xi$ (resp. $\theta\in H^{\overline{\cO}}_\xi$).
\end{dfn}

The following is yet another consequence of the Hodge theory on the cohomology of $X$, which will be very 
useful for our purposes. The statement appears in the proof of~\cite[Proposition~V.1.4]{ara:97} (where
$X$ must be replaced by~$\bar{X}$ in the last summand).

\begin{prop}\label{hd}
There is a natural real decomposition
\begin{equation*}
H^1(X;\bc)=(H^{1 0}(X)\oplus H^{1 1}_{\br}(X))\oplus
\sqrt{-1}(H^{1 1}_{\br}(X)\oplus H^1(\bar{X};\br)).
\end{equation*}
The sum of the first three terms corresponds to $H^0(\bar{X};\Omega_{\bar{X}}^1(\log\cD))$, whereas 
the sum of the first two corresponds to those forms having purely imaginary residues. The residues 
along the components of $\cD$ of the forms in the first and last terms are trivial.
\end{prop}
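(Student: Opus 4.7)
The plan is to obtain the decomposition as an unpacking of Deligne's mixed Hodge structure on $H^1(X;\bc)$ constructed via the log-de-Rham complex. First I would recall that under the isomorphism $H^1(X;\bc)\cong \mathbb{H}^1(\bar X;\Omega_{\bar X}^\bullet(\log\cD))$, the Hodge filtration satisfies $F^1 H^1(X;\bc)=H^0(\bar X;\Omega_{\bar X}^1(\log\cD))$ and the weight filtration has $W_1 H^1(X;\bc)=\mathrm{im}(H^1(\bar X;\bc)\to H^1(X;\bc))$ with $W_2=H^1(X;\bc)$. The weight graded pieces are pure: $\mathrm{Gr}^W_1\cong H^1(\bar X;\bc)$ with Hodge numbers $(g,g)$, and $\mathrm{Gr}^W_2$ is pure of type $(1,1)$, of complex dimension equal to the rank of the residue map.

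Next I would decompose each graded piece as a real vector space. The weight-$2$ piece, being purely of type $(1,1)$, has a canonical real structure $H^{1,1}_{\br}(X)$, yielding $\mathrm{Gr}^W_2=H^{1,1}_{\br}(X)\oplus \sqrt{-1}\,H^{1,1}_{\br}(X)$. For the weight-$1$ piece I would set $H^{1,0}(X):=H^{1,0}(\bar X)$ and verify that the two real subspaces $H^{1,0}(X)$ and $\sqrt{-1}\,H^1(\bar X;\br)$ of $H^1(\bar X;\bc)$ intersect trivially (if $\omega\in H^{1,0}(X)$ satisfies $\omega=\sqrt{-1}\alpha$ with $\alpha$ real, then $\bar\omega=-\omega$, so the $(0,1)$-part of the $(1,0)$-class $\omega$ equals $-\omega$, forcing $\omega=0$) and together span by dimensions ($2g+2g=4g$). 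Assembling these two real splittings and using that the MHS admits a splitting over $\br$ compatible with $F^\bullet$ on the weight graded pieces (e.g.\ the Deligne bigrading), I obtain
\begin{equation*}
H^1(X;\bc)=\bigl(H^{1,0}(X)\oplus H^{1,1}_\br(X)\bigr)\oplus \sqrt{-1}\bigl(H^{1,1}_\br(X)\oplus H^1(\bar X;\br)\bigr).
\end{equation*}

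The three refinements then follow. The sum of the first three summands equals $H^{1,0}(X)\oplus H^{1,1}(X)$, which by strictness of $F$ with respect to $W$ in a MHS equals $F^1=H^0(\bar X;\Omega_{\bar X}^1(\log\cD))$. For the residue statements I would exploit the Gysin sequence $H^1(\bar X;\bc)\to H^1(X;\bc)\xrightarrow{\mathrm{Res}} H^0(\cD^{(1)};\bc)(-1)$: the kernel of $\mathrm{Res}$ is $W_1$, which contains both $H^{1,0}(X)$ and $\sqrt{-1}\,H^1(\bar X;\br)$, accounting for the trivial residues of the first and last summands. Under the Tate twist, the natural $\br$-structure on the target is $H^0(\cD^{(1)};\br)(-1)\cong \sqrt{-1}\,H^0(\cD^{(1)};\br)\subset \bc^k$; since $H^{1,1}_\br(X)$ is the $\br$-structure of $\mathrm{Gr}^W_2$ and maps isomorphically onto this $\br$-structure, classes in the first two summands are represented by log forms with purely imaginary residues.

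The main obstacle I anticipate is the bookkeeping in the last step: tracking the $2\pi\sqrt{-1}$-factor implicit in the Tate twist $(-1)$ of $H^0(\cD^{(1)};\bc)$ through the comparison between the Deligne MHS (defined algebraically via $\Omega_{\bar X}^\bullet(\log\cD)$) and the topological residue map used to detect the monodromy of a log 1-form around an irreducible component of $\cD$. It is precisely this twist that converts real classes in $H^1(X;\br)$ into purely imaginary residue vectors; once the normalization is fixed, the remainder of the argument is a routine unpacking of standard MHS data.
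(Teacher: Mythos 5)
Your proposal is correct and takes essentially the approach the paper itself relies on: the paper offers no proof of Proposition~\ref{hd}, referring instead to the proof of Proposition~V.1.4 in~\cite{ara:97}, and your derivation is exactly the standard unpacking of Deligne's mixed Hodge structure on $H^1(X;\bc)$ (weights $1,2$, with $F^1=H^0(\bar{X};\Omega^1_{\bar{X}}(\log\cD))$ and $\mathrm{Gr}^W_2$ of type $(1,1)$) that underlies that reference. The only point to make explicit is that the Deligne bigrading is defined over $\br$ in this situation because $W_0=0$ forces $I^{1,1}=F^1\cap\overline{F^1}$, which is conjugation-stable; granting that one-line verification, your assembly of the two real splittings and the residue bookkeeping via the $2\pi\sqrt{-1}$ normalization go through as described.
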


\begin{para}\textbf{Character Decompositions.} \cite{ara:97,be}\label{char-dec}
Let $\xi\in\bt_G=H^1(X;\bc^*)$. Note that there exists a torsion element $\tau$ such that 
$\tilde{\xi}:=\tau^{-1}\xi\in\bt_G^\one$ and there exists $\eta\in H^1(X;\bc)$ such that 
$\tilde{\xi}=\exp(\eta)$. The element $\eta$ is unique up to sum by an element in $2 \sqrt{-1}\pi H^1(X;\bz)$.
According to Proposition~\ref{hd}, there exist $\omega\in H^0(\bar{X};\Omega_{\bar{X}}^1(\log\cD))$ and 
$\delta\in H^1(\bar{X};\br)$ such that $\eta=\omega+\sqrt{-1}\delta$.
Summarizing, $\xi=\psi\exp(\omega)$, where $\psi:=\tau\exp(\sqrt{-1}\delta)$ is unitary.

Note that any choice of $\omega_0\in H^{1 1}_{\br}(X)$ leads to another decomposition $\xi=\tilde{\psi}\exp(\tilde{\omega})$
where $\tilde{\omega}:=\omega-\sqrt{-1}\omega_0$ and $\tilde{\psi}:=\tau\exp(\sqrt{-1}(\delta+\omega_0))$.
\end{para}

\begin{dfn}
A decomposition $\xi=\psi\exp(\omega)$ is called a \emph{unitary-holomorphic} decomposition of $\xi$
if $\psi$ is unitary and $\omega\in H^0(\bar{X};\Omega_{\bar{X}}^1(\log\cD))$.
Such a decomposition is called:
\begin{itemize}
\item\emph{integrally unramified} if $\cD^\psi=\cD^\xi$ and $\omega$ is holomorphic outside $\cD^\xi$;
\item \emph{strict} if it is integrally unramified and $\omega\notin 2 \sqrt{-1}\pi H^1(X;\bz)$.
\end{itemize}
\end{dfn}

\begin{rem}
In the definition of \emph{integrally unramified}, the condition of being holomorphic outside $\cD^\xi$
is non-void. Let $\xi=\psi\exp(\omega)$  be an integrally unramified
decomposition and let $\eta$ be  a logarithmic one-form having integral residues around the components
of $\cD-\cD^\xi$. The decomposition $\xi=\psi\exp(\omega+\eta)$ is also unitary-holomorphic
but not integrally unramified.
\end{rem}

\begin{rem}\label{rem-nabla}
Let us fix a unitary-holomorphic decomposition $\xi=\psi\exp(\omega)$. We consider the Deligne extension $\tilde{L}_\psi$
associated with $\psi$. This is also an extension for $L_\xi$ and the meromorphic connection is
$\nabla_\omega:=\bar{\nabla}+\wedge\omega$ (see, e.g.,~\cite[Section~V]{ara:97} or~\cite{be}), where
$\bar{\nabla}$ is the connection associated with~$\psi$: it is a flat meromorphic connection
extending $\nabla$ and its monodromy equals~$\xi$ (using the unitary-holomorphic decomposition). 
\end{rem}

\begin{lem}\label{lem-unram}
Let $\xi=\psi\exp(\omega)$ be an integrally unramified unitary-holomorphic decomposition.
\begin{enumerate}
\item\label{lem-unram2} Any integral residue of~$\nabla_\omega$ vanishes
(and, in particular, Theorem{\rm~\ref{prop-dlg}\ref{prop-dlg-pz}} can be applied).
\item\label{lem-unram3} The character $\psi$ (resp. the form $\omega$) is a restriction of a unitary character
(resp. a logarithmic form) defined on $X^\xi$.
\end{enumerate}
\end{lem}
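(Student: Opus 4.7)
The plan is to work component-by-component on $\cD$, matching the residue information coming from the Deligne extension $\tilde L_\psi$ of $\psi$ (the piece $\bar\nabla$) with the residue data of $\omega$. Fix an irreducible component $D\subset\cD$. By Remark~\ref{rem-nabla} the connection reads $\nabla_\omega=\bar\nabla+\wedge\omega$, so its residue around $D$ is
\[
\beta_D:=\alpha_D+\mathrm{Res}_D(\omega),
\]
where $\alpha_D$ is the residue of the Deligne extension $\tilde L_\psi$ at $D$. Two facts are then automatic: $\exp(2\sqrt{-1}\pi\beta_D)=\xi(\mu_D)$ (since the monodromy of $\nabla_\omega$ is $\xi$), and $\mathrm{Re}(\alpha_D)\in[0,1)$ (by the normalization of the Deligne extension of the unitary character~$\psi$).

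For part~\ref{lem-unram2}, I would split into the two cases according to whether $D$ lies in $\cD^\xi$ or not. If $D\subset\cD^\xi$, then $\xi(\mu_D)\neq 1$, hence $\beta_D\notin\bz$, so no integrality issue arises. If $D\not\subset\cD^\xi$, then the hypothesis $\cD^\psi=\cD^\xi$ forces $\psi(\mu_D)=1$, i.e.\ $\alpha_D\in\bz$; combined with $\mathrm{Re}(\alpha_D)\in[0,1)$ this gives $\alpha_D=0$. On the other hand, the assumption that $\omega$ is holomorphic outside $\cD^\xi$ gives $\mathrm{Res}_D(\omega)=0$. Hence $\beta_D=0$, so any integral residue of $\nabla_\omega$ vanishes. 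In particular no residue is a positive integer, so the extension is suitable and Theorem~\ref{prop-dlg}\ref{prop-dlg-pz} is applicable.

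For part~\ref{lem-unram3}, both assertions follow from the vanishing information just obtained. On the topological side, $\psi(\mu_D)=1$ for every component $D$ of $\cD\setminus\cD^\xi$ means that $\psi$ is trivial on each such meridian; since the kernel of the natural surjection $\pi_1(X)\twoheadrightarrow\pi_1(X^\xi)$ is normally generated by exactly these meridians, $\psi$ factors through $\pi_1(X^\xi)$ and is clearly still unitary. On the holomorphic side, having $\mathrm{Res}_D(\omega)=0$ for every $D\subset\cD\setminus\cD^\xi$ places $\omega$ in $H^0(\bar X;\Omega^1_{\bar X}(\log\cD^\xi))$, i.e.\ $\omega$ is a logarithmic form on the compactification of $X^\xi$ with poles supported only on $\cD^\xi$, as required.

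The only step that requires care, rather than pure bookkeeping, is the use of the Deligne normalization: the hypothesis $\cD^\psi=\cD^\xi$ only guarantees $\alpha_D\in\bz$ for $D\not\subset\cD^\xi$, and one needs the range $\mathrm{Re}(\alpha_D)\in[0,1)$ to conclude $\alpha_D=0$ rather than some nonzero integer. This is precisely where the choice of the Deligne extension of $\psi$ (and not some other meromorphic extension) intervenes; once this is isolated, everything reduces to unwinding the definition of "integrally unramified".
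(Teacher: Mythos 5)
Your proof is correct and follows essentially the same route as the paper's (much terser) argument: unwind the definition of \emph{integrally unramified} along the components of $\cD$ where $\xi$ does not ramify, use the Deligne normalization of $\tilde L_\psi$ to kill the residue of the unitary part, and conclude both the vanishing of integral residues of $\nabla_\omega$ and the factorization of $\psi$ and $\omega$ through $X^\xi$. Your write-up simply makes explicit the bookkeeping that the paper leaves implicit.
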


\begin{proof}
Let $D$ be an irreducible component of $\cD$ where $\xi$ does
not ramify. From the definition of \emph{integrally unramified}
we deduce that $\exp(\omega)$ does not ramify along~$D$, and hence,
the same happens for~$\psi$. Therefore~\eqref{lem-unram2} and~\eqref{lem-unram3} follow.
\end{proof}

%

The proofs of the results of \S\ref{sec-char} are easier for characters
admitting a strict unitary-holomorphic decomposition, see Corollary~\ref{cor-caso1}. The following result
shows under which conditions such decompositions exist. The sufficient condition~\ref{lema-char1} 
is classical and it is well known in the projective case while the second sufficient condition
is original in this context to our knowledge. 

\begin{lem}\label{lema-char}
In the following cases the character~$\xi$ admits a strict unitary-holomorphic decomposition:
\begin{enumerate}
\enet{\rm(\arabic{enumi})} 
\item\label{lema-char1} $\xi$ is non-unitary
\item\label{lema-char2} $b_1(X^\xi)>b_1(\bar{X})$.
\end{enumerate}
\end{lem}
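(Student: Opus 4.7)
The plan is to build a strict unitary-holomorphic decomposition starting from the canonical Arapura-Beauville decomposition of~\ref{char-dec}, applied to $\xi$ on $X^\xi$ rather than on $X$, and then adjusting it using the freedom provided by Proposition~\ref{hd}. Since $\xi$ has trivial monodromy along every component of $\cD\setminus\cD^\xi$, it factors through $\pi_1(X^\xi)$, so~\ref{char-dec} with compactification $\bar X$ and boundary divisor $\cD^\xi$ produces $\xi=\psi\exp(\omega)$ with $\omega\in H^0(\bar X;\Omega^1_{\bar X}(\log\cD^\xi))$; this automatically yields ``$\omega$ holomorphic outside $\cD^\xi$'' and the inclusion $\cD^\psi\subseteq\cD^\xi$. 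To secure the reverse inclusion $\cD^\xi\subseteq\cD^\psi$, I would perturb by a log form $\nu\in\sqrt{-1}H^{11}_{\br}(X^\xi)$ having non-integer real residues precisely at those components of $\cD^\xi$ where $\psi$ fails to ramify, replacing $(\psi,\omega)$ by $(\psi\exp(-\nu),\omega+\nu)$; since $\nu$ has purely imaginary cohomology class, $\exp(-\nu)$ is unitary and the required ramification of the new $\psi$ is achieved.

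For case~\eqref{lema-char1} the strictness condition $\omega\notin 2\sqrt{-1}\pi H^1(X;\bz)$ is immediate: if $[\omega]\in 2\sqrt{-1}\pi H^1(X;\bz)$, then $\exp(\omega)$ would be the trivial character, forcing $\xi=\psi$ to be unitary and contradicting the hypothesis. Together with the construction above, this yields the required strict decomposition.

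For case~\eqref{lema-char2} the non-trivial situation is $\xi$ unitary (otherwise~\eqref{lema-char1} applies); the canonical decomposition then places $\omega$ inside $2\sqrt{-1}\pi H^1(X;\bz)$, and strictness must be produced by an additional shift. The hypothesis $b_1(X^\xi)>b_1(\bar X)$ translates, via Proposition~\ref{hd} applied to $X^\xi$, into $H^{11}_{\br}(X^\xi)\neq 0$, so one can pick a non-trivial $\omega_0\in H^{11}_{\br}(X^\xi)$ whose image in $H^1(X;\bc)$ lies outside $2\pi H^1(X;\bz)$ (the image is injective by the mixed Hodge structure on $H^1(X)$). Using the freedom from~\ref{char-dec}, replace $(\psi,\omega)$ by $(\psi\exp(\sqrt{-1}\omega_0),\omega-\sqrt{-1}\omega_0)$; the new $\omega$ is no longer in $2\sqrt{-1}\pi H^1(X;\bz)$, giving strictness, and integral unramification is then re-established by the $\nu$-perturbation of the first paragraph.

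The main obstacle will be establishing the existence of the auxiliary form $\nu$ with the prescribed residue pattern, controlled so that its residues are non-integer along exactly the components of $\cD^\xi$ requiring adjustment. This reduces to surjectivity of the residue map on $\sqrt{-1}H^{11}_{\br}(X^\xi)$ onto the real-residue coordinates at the relevant components, together with a lattice-theoretic refinement to avoid integer residues that would leave the ramification of $\psi$ unchanged. Once the existence of $\nu$ is secured, the interaction with the shift by $\sqrt{-1}\omega_0$ in case~\eqref{lema-char2} is harmless, since both modifications live inside $H^0(\bar X;\Omega^1_{\bar X}(\log\cD^\xi))$ and can be performed compatibly within the Hodge decomposition of Proposition~\ref{hd}.
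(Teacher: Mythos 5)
Your plan follows the same skeleton as the paper's proof (reduce to $X^\xi$, start from a unitary-holomorphic decomposition there, push a real-residue logarithmic class into the unitary factor to force $\cD^\psi=\cD^\xi$, then get strictness for free in case (1) and by a generic shift in $H^{11}_{\br}(X^\xi)$ in case (2)), but the step you yourself flag as the ``main obstacle'' is a genuine gap, and the statement you reduce it to is false in general. The residues along the components of $\cD^\xi$ of classes in $H^1(X^\xi;\bc)$ --- in particular of the logarithmic forms in $\sqrt{-1}\,H^{11}_{\br}(X^\xi)$ --- are not arbitrary: by the Gysin sequence their image consists only of those combinations $\sum_D r_D D$ that are homologically trivial in $\bar{X}$ (on a punctured curve, for instance, the residues must sum to zero). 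Hence one cannot in general prescribe ``non-integer residues precisely at the components where $\psi$ fails to ramify and integer residues elsewhere.'' Concretely, if the only relation among the boundary components is (a multiple of) $D_1+D_2+D_3\equiv 0$ in $H^2(\bar{X})$ and the unitary part $\psi$ produced by \ref{char-dec} is unramified only along $D_1$ (which can happen when the ramification of $\xi$ along $D_1$ is carried by the torsion factor $\tau$), then every available $\nu$ has equal residues at $D_1,D_2,D_3$, so it cannot be non-integral at $D_1$ and integral at $D_2,D_3$. So the ``surjectivity of the residue map onto the relevant coordinates'' on which your argument rests does not hold, and the bad components cannot be decoupled from the good ones at the level of residues.

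The missing idea is to drop the requirement of integer residues at the good components and use genericity of a scalar instead, which is exactly what the paper does: it chooses (following Arapura) a class $\omega_1\in H^1(X^\xi;\bz)$ with non-trivial residues along every component of $\cD^\xi$, writes $\omega_1=\alpha+\beta$ with $\alpha\in H^1(\bar{X};\br)$ and $\beta\in H^{11}_{\br}(X^\xi)$, and replaces the unitary part by $\psi\exp(2\pi\sqrt{-1}(\alpha+t\beta))$ while absorbing $2\pi\sqrt{-1}(1-t)\beta$ into the holomorphic part. For all but countably many $t\in\br$ the new unitary factor ramifies along \emph{every} component of $\cD^\xi$: at the previously unramified ones because $\beta$ has non-zero residue there, and at the others because a generic $t$ cannot turn the residue into an integer --- no control of the residue pattern at the good components is needed. (Note also that non-vanishing of residues at the bad components is not an extra hypothesis: there $\omega$ itself has non-integral, hence non-zero, residue, so its $H^{11}_{\br}\oplus\sqrt{-1}\,H^{11}_{\br}$ part already provides such a class.) The same genericity argument is what guarantees, in your case (2), that the shift by $\sqrt{-1}\,\omega_0$ can be chosen so as to preserve integral unramification while achieving $\omega\notin 2\sqrt{-1}\pi H^1(X;\bz)$; apart from this, your reduction to $X^\xi$ and your treatment of strictness in the two cases coincide with the paper's.
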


\begin{proof}
By Lemma~\ref{lem-unram}\eqref{lem-unram3} it is equivalent to find a strict unitary-holomorphic 
decomposition for the induced character~$\xi_0$ in $X^\xi$. Then, replacing $X$ by $X^\xi$,
we may assume that $\xi$ ramifies along every irreducible component of~$\cD$. 
Consider $\xi=\psi\exp(\omega)$ a unitary-holomorphic decomposition of $\xi$.

Following Arapura~\cite{ara:97} one can choose $\omega_1\in H^1(X;\ZZ)$ with non-trivial residues
along~$\cD$. Consider $\eta=\alpha+\beta$ a decomposition where
$\alpha\in H^1(\bar X;\RR)$ and $\beta\in H^{11}_\RR(X)$. Note that 
$\xi=\psi\exp(\omega)\exp(2\pi\sqrt{-1} \eta)=
\psi\exp(2\pi\sqrt{-1} (\alpha+t\beta)) \exp(\omega+2\pi\sqrt{-1} (1-t)\beta)$. Note that 
$\psi_1:=\psi\exp(2\pi\sqrt{-1} (\alpha+t\beta))$ has non-trivial residues along $\cD$.
Eventually replacing $\psi$ by $\psi_1$ and $\omega$ by $\omega+2\pi\sqrt{-1} (1-t)\beta$ one might 
assume that $\xi=\psi\exp(\omega)$ is an integrally unramified unitary-holomorphic decomposition of $\xi$.

All is left to check is that this can be done choosing $\omega\not\equiv 0\mod H^1(X;\bz)$.

If~\ref{lema-char1} holds, then $\omega \notin H^1(X;\bz)$ (by Proposition~\ref{hd}), 
otherwise $\xi=\psi$ would be a unitary character. 

If~\ref{lema-char2} holds, then Proposition~\ref{hd}
implies that $H^{1 1}_{\br}(X)\neq 0$. 
A generic choice of $\omega_0\in H^{1 1}_{\br}(X)$ leads to another decomposition where 
$\psi$ (resp. $\omega$) is replaced by $\psi\exp(\sqrt{-1}\omega_0)$ (resp. $\omega-\sqrt{-1}\omega_0$)
satisfying $\omega\notin H^1(X;\bz)$.
\end{proof}

\section{Anti-holomorphic pure factors of twisted cohomology}\label{sec-inverse}

In this section we study the relationship between the twisted cohomologies relative
to characters $\xi$ and~$\xi^{-1}$ taking into account Deligne's theory. In particular,
we study the properties of $H_{\xi^{\pm 1}}^\cO$ and $H_{\xi^{\pm 1}}^{\overline{\cO}}$ 
(see~\eqref{eq-hypercohomology} and paragraph right after for a definition).

\begin{thm}[\cite{timm:87,ara:97}]\label{prop-tm}
If the character~$\xi$ ramifies along each irreducible component of~$\cD$, then there is a natural
inclusion $H^{\overline{\cO}}_\xi\hookrightarrow \overline{H^{\cO}_{\xi^{-1}}}$.
\end{thm}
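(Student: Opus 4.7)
The plan is to combine the unitary-holomorphic decomposition of~$\xi$ with Timmerscheidt's Hodge-theoretic analysis of the Deligne extension of a unitary local system.

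First, I would fix a unitary-holomorphic decomposition $\xi=\psi\exp(\omega)$. Since $\xi$ ramifies along every irreducible component of~$\cD$, i.e.~$\cD^\xi=\cD$, Lemma~\ref{lem-unram}\eqref{lem-unram3} guarantees this decomposition is integrally unramified and that both $\psi$ and $\psi^{-1}$ are unitary characters ramifying along every component of~$\cD$. Following Remark~\ref{rem-nabla}, the Deligne extensions $\tilde L_{\psi}$ and $\tilde L_{\psi^{-1}}$ equipped with the meromorphic connections $\bar\nabla+\omega\wedge$ and $\bar\nabla-\omega\wedge$ respectively are then suitable extensions of $L_\xi$ and~$L_{\xi^{-1}}$. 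Theorem~\ref{prop-dlg}\eqref{prop-dlg-pz} combined with Timmerscheidt's degeneracy Theorem~\ref{prop-dlg}\eqref{prop-dlg-unit} applied to the unitary characters $\psi$ and~$\psi^{-1}$ (which forces the $\bar\nabla$-summand of these twisted connections to vanish on the relevant log-Dolbeault groups) yields the explicit identification
\[
H^{\overline{\cO}}_\xi=\ker\!\bigl(\omega\wedge\colon H^1(\bar X;\tilde L_{\psi})\longrightarrow H^1(\bar X;\Omega^1_{\bar X}(\log\cD)\otimes\tilde L_{\psi})\bigr),
\]
and identifies $H^{\cO}_{\xi^{-1}}$ with the middle cohomology of the global-sections log-de-Rham complex $(\Omega^\bullet_{\bar X}(\log\cD)\otimes\tilde L_{\psi^{-1}},\,-\omega\wedge)$.

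Second, I would invoke the Hodge-type conjugation attached to the Deligne extension of the unitary character~$\psi$: using the ramification hypothesis (which under Deligne's choice of residues identifies $\tilde L_{\psi^{-1}}\cong\tilde L_{\psi}^{\vee}\otimes\cO_{\bar X}(\cD)$) together with the Hermitian-Einstein structure on~$\tilde L_{\psi}$ induced by the flat unitary connection, one obtains a natural conjugate-linear isomorphism
\[
\tau\colon H^1(\bar X;\tilde L_{\psi})\xrightarrow{\;\sim\;}\overline{H^0(\bar X;\Omega^1_{\bar X}(\log\cD)\otimes\tilde L_{\psi^{-1}})}
\]
compatible with the wedge product pairings. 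In particular $\tau$ sends the kernel condition defining $H^{\overline{\cO}}_\xi$ into the conjugate of the kernel of $\omega\wedge$ on $H^0(\bar X;\Omega^1_{\bar X}(\log\cD)\otimes\tilde L_{\psi^{-1}})$; post-composing with the quotient by $\omega\wedge H^0(\bar X;\tilde L_{\psi^{-1}})$ produces a natural map $H^{\overline{\cO}}_\xi\to\overline{H^{\cO}_{\xi^{-1}}}$.

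The main obstacle is injectivity of this composite. One has to check that no nonzero class in $H^{\overline\cO}_\xi$ is carried by $\tau$ into the conjugate of $\omega\wedge H^0(\bar X;\tilde L_{\psi^{-1}})$, and this is the delicate step: the ramification hypothesis enters essentially here, combined with the naturality of Timmerscheidt's conjugation with respect to the holomorphic wedge pairings on log-Dolbeault cohomology. Carrying this out rigorously is the heart of the proof and relies on the full strength of the spectral sequence analysis of~\cite{timm:87} as refined in~\cite{ara:97}.
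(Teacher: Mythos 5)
Your outline follows essentially the same route as the paper's proof: fix a unitary-holomorphic decomposition $\xi=\psi\exp(\omega)$, describe $H^{\overline{\cO}}_\xi$ and $H^{\cO}_{\xi^{-1}}$ via the Deligne extension with the twisted connections $\nabla_{\pm\omega}$ (Theorem~\ref{prop-dlg} and Remark~\ref{rem-nabla}), use Timmerscheidt's identity $\tilde{L}_{\psi}^{-1}=\tilde{L}_{\psi^{-1}}\otimes\cO_{\bar X}(-\cD)$ --- which is exactly where the ramification hypothesis enters --- to view the conjugate $\bar\alpha$ of a class $\alpha\in H^1(\bar X;\tilde{L}_\psi)$ as an element of $H^0(\bar X;\Omega^1_{\bar X}(\log\cD)\otimes\tilde{L}_{\psi^{-1}})$, and then appeal to the analytic input of \cite{timm:87,ara:97}.

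Two points are off, however. First, Lemma~\ref{lem-unram} is quoted backwards: it assumes an integrally unramified decomposition rather than producing one, and $\cD^\xi=\cD$ does not by itself force $\psi$ to ramify along every component (the ramification of $\xi$ along a component may come entirely from $\exp(\omega)$); one must choose the decomposition so that $\cD^\psi=\cD$, as in the proof of Lemma~\ref{lema-char}, before Timmerscheidt's duality for the Deligne extension applies in the form you use. Second, and more substantively, the assertion that the conjugation $\tau$ is ``compatible with the wedge product pairings'', so that $\alpha\wedge\omega=0$ formally yields $\bar\alpha\wedge\omega=0$, does not hold for free: conjugation turns $\alpha\wedge\omega$ into $\bar\alpha\wedge\bar\omega$, and the two vanishings live in different places (a class in $H^1(\bar X;\Omega^1_{\bar X}(\log\cD)\otimes\tilde{L}_\psi)$ versus a global section of $\Omega^2_{\bar X}(\log\cD)\otimes\tilde{L}_{\psi^{-1}}$). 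In the paper this vanishing is precisely one of the two facts extracted from the $L^2$-cohomology (harmonic representative) argument in the proof of \cite[Proposition~V.1.4]{ara:97}, together with the non-exactness statement (when $\tilde{L}_\psi$ is trivial, $\bar\alpha$ is not a multiple of $\omega$) that you correctly flag as the delicate step. So the hard analytic input must cover both the closedness of $\bar\alpha$ under $\wedge\,\omega$ and the injectivity, not only the latter; with that relocation of the difficulty, your argument coincides with the paper's.
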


Theorem~\ref{prop-tm} is proved in \cite[Theorem~5.1]{timm:87} for unitary characters
(where the inclusion is in fact an isomorphism). The second part of the proof 
of~\cite[Proposition~V.1.4]{ara:97} provides the general result. We can summarize it as follows.
For $\xi=\psi\exp(\omega)$, let $\alpha\in H^{\overline{\cO}}_\xi$, that is, 
$\alpha\in H^1(\bar X,\tilde{L}_\psi)$ and $\alpha\wedge \omega=0$, 
cf. Remark~\ref{rem-nabla} and exact sequence in~\eqref{spseq1}. 
Since $\xi$ ramifies along each irreducible component of~$\cD$, one has 
$\tilde{L}_{\psi}^{-1} = \tilde{L}_{\psi^{-1}}\otimes \mathcal O_{\bar X}(-\cD)$ 
(cf.~\cite[Theorem~5.2]{timm:87}). Therefore, 
$\bar \alpha\in H^0(\bar X,\Omega_{\bar X}^1(\log \cD)\otimes \tilde{L}_{\psi^{-1}}).$
According to the $L^2$~cohomology arguments in the proof of~\cite[Proposition~V.1.4]{ara:97} 
one has that $\bar \alpha \wedge \omega=0$ and if $\tilde{L}_\psi$ is trivial, therefore $\bar \alpha$ 
is not a multiple of $\omega$. 
Thus $0\neq\bar \alpha\in H^{\cO}_{\xi^{-1}}$ and the statement follows, see~\eqref{spseq}.

Note that this theorem does not deal with arbitrary characters, but only with those that ramify 
along every irreducible component of $\cD$. In the general case one might have to resort to the
conjugated character as we will see in what follows. Let $\xi$ be a character and consider
as in~\S\ref{sec-prelim} the divisor~$\cD^\xi$ (containing the components
of $\cD$ where $\xi$ ramifies) and~$X^\xi:=\bar{X}\setminus\cD^{\xi}$.
For the sake of clarity we denote by $\xi_0$ the character induced by $\xi$ on~$X^\xi$.   

\begin{prop}\label{prop-iso}
For any character $\xi$ on $X$ there is a natural isomorphism 
$H^{\overline{\cO}}_{\hat{\xi}}\cong H^{\overline{\cO}}_{\hat{\xi}_0}$
where $\hat{\xi}_0$ is the character induced by~$\hat{\xi}$ on~$X^{\hat{\xi}}$ and 
$\hat{\xi}$ is either $\xi$ or its conjugate~$\bar \xi$.
\end{prop}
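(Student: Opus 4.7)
I would start with an integrally unramified unitary-holomorphic decomposition $\hat\xi = \psi\exp(\omega)$, which exists by Lemma~\ref{lema-char} in the non-unitary case (and trivially with $\omega = 0$ in the unitary one). By Lemma~\ref{lem-unram}\ref{lem-unram3}, $\psi$ and $\omega$ descend to a parallel decomposition $\hat\xi_0 = \psi_0\exp(\omega_0)$ on $X^{\hat\xi}$, with $\omega_0$ having log poles only along $\cD^{\hat\xi}$. Setting $\cD_0 := \cD \setminus \cD^{\hat\xi}$, the triviality of $\psi$ on meridians of each component of $\cD_0$ forces the Deligne residues there to be integers in $[0,1)$, hence zero. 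Consequently the Deligne extensions $\tilde L_\psi$ on $(\bar X,\cD)$ and $\tilde L_{\psi_0}$ on $(\bar X,\cD^{\hat\xi})$ agree as holomorphic line bundles on $\bar X$ endowed with the same flat meromorphic connection.

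Applying Theorem~\ref{prop-dlg}\ref{prop-dlg-pz} (whose hypothesis is supplied by Lemma~\ref{lem-unram}\ref{lem-unram2}), both $H^{\overline{\cO}}_{\hat\xi}$ and $H^{\overline{\cO}}_{\hat\xi_0}$ appear as kernels of $\nabla_\omega = \bar\nabla + \omega\wedge$ on the common space $H^1(\bar X;\tilde L_\psi)$, with targets
\begin{equation*}
V := H^1(\bar X;\Omega^1_{\bar X}(\log \cD) \otimes \tilde L_\psi) \quad \text{and} \quad V_0 := H^1(\bar X;\Omega^1_{\bar X}(\log \cD^{\hat\xi}) \otimes \tilde L_\psi),
\end{equation*}
respectively. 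Since $\omega$ and $\bar\nabla$ have no poles/residues along $\cD_0$, $\nabla_\omega$ factors at the sheaf level through the subcomplex with log poles only on $\cD^{\hat\xi}$; hence the inclusion $\Omega^1_{\bar X}(\log \cD^{\hat\xi}) \hookrightarrow \Omega^1_{\bar X}(\log \cD)$ induces a natural injection $H^{\overline{\cO}}_{\hat\xi_0} \hookrightarrow H^{\overline{\cO}}_{\hat\xi}$, which is the candidate isomorphism.

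The main obstacle is to prove this injection is surjective. I would exploit the Poincar\'e residue exact sequence
\begin{equation*}
0 \to \Omega^1_{\bar X}(\log \cD^{\hat\xi}) \otimes \tilde L_\psi \to \Omega^1_{\bar X}(\log \cD) \otimes \tilde L_\psi \to \bigoplus_{D \subset \cD_0} \tilde L_\psi|_D \to 0
\end{equation*}
and its associated long exact sequence. Given $\alpha \in H^{\overline{\cO}}_{\hat\xi}$, the class $\nabla_\omega(\alpha)$ a priori lives in $V_0$ and maps to zero in $V$; the surjectivity question reduces to showing that $\ker(V_0 \to V)$ does not meet the image of $\nabla_\omega$ non-trivially. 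Timmerscheidt's spectral sequence degeneration for the unitary $\psi$ (which makes $\bar\nabla$ vanish on $H^1(\bar X;\tilde L_\psi)$), combined with the absence of residues of $\omega$ along $\cD_0$, should force the image of $\nabla_\omega$ to consist of classes that are ``residue-free'' along $\cD_0$, whereas $\ker(V_0 \to V)$ is carried precisely by these residues, so the two meet trivially. The flexibility of replacing $\xi$ by $\bar\xi$ in the statement reflects that, depending on which Hodge piece of the residue carries degeneracy, one or the other of the two decompositions is adapted to this residue analysis; complex conjugation swaps $H^{\cO}$ and $H^{\overline{\cO}}$ and converts one case into the other.
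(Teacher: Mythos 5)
Your reduction to the natural inclusion $H^{\overline{\cO}}_{\hat{\xi}_0}\subseteq H^{\overline{\cO}}_{\hat{\xi}}$ inside the common space $H^1(\bar{X};\tilde{L}_\psi)$ is correct, but the surjectivity step is the entire content of the proposition, and the mechanism you propose for it is not only unproven but false as stated. Example~\ref{ex-elliptic} of the paper is a direct counterexample to your key claim: take $\bar{X}$ elliptic, $X=\bar{X}\setminus\{p\}$ and $\xi=\exp(\omega)$ with $0\neq\omega\in H^0(\bar{X};\Omega^1_{\bar{X}})$. This is a non-unitary character admitting the integrally unramified decomposition with $\psi=\one$, and here $\cD^{\xi}=\emptyset$, $\cD_0=\{p\}$, $V=H^1(\bar{X};\Omega^1_{\bar{X}}(\log p))=0$, $V_0=H^1(\bar{X};\Omega^1_{\bar{X}})\cong\bc$, while $\wedge\,\omega\colon H^1(\bar{X};\cO_{\bar{X}})\to V_0$ is essentially the Serre duality pairing, hence surjective onto $\ker(V_0\to V)=V_0$. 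So the image of $\nabla_\omega$ is not ``residue-free'' along $\cD_0$, and indeed $H^{\overline{\cO}}_{\xi}\cong\bc$ whereas $H^{\overline{\cO}}_{\xi_0}=0$: the isomorphism simply fails for this choice of $\hat{\xi}$. Passing to $\bar{\xi}$ is therefore not a symmetric afterthought but is forced, and the relevant dichotomy is whether (possibly after conjugation) the unitary part of a unitary-holomorphic decomposition has non-trivial Deligne extension $\tilde{L}_\psi$; this is exactly Proposition~\ref{prop-iso0} and Step~1 of the paper's proof, which your proposal does not identify.

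Even once $\hat{\xi}$ is chosen correctly, the vanishing of the connecting map in your residue exact sequence is precisely where the work lies in the surface case, and it does not follow from Timmerscheidt's degeneration together with the absence of residues of $\omega$ along $\cD_0$. The delicate situation is when $\psi$ restricts trivially to a component $D$ of $\cD-\cD^{\hat{\xi}}$, so that $H^0(D;(\tilde{L}_\psi)_{|D})\cong\bc$ and one must decide which of the two maps $(\spadesuit)$, $(*)$ in \eqref{eq-lex} vanishes. The paper settles this by a genuinely topological argument (Steps~5--7): a Mayer--Vietoris computation producing a twisted cycle $\zeta_D$ on a meridian of $D$, a limiting argument along the characters $\xi_t=\psi\exp(t\omega)$, and the naturality of the decomposition for the unitary character $\psi$, carried out by induction one component of $\cD-\cD^{\hat{\xi}}$ at a time. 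Your proposal replaces all of this with an assertion (``should force''), so besides the incorrect uniform claim above it leaves the essential step unproved.
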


In order to prove this proposition we need to recover more information on the
Deligne's decomposition of the twisted cohomology of a Riemann surface, which has already been
computed in~\S\ref{sec-orb}. The proof will be postponed to the end of the section.
Note also that Proposition~\ref{prop-orbi-riemann} allows to extend the concept of decomposition into a holomorphic and 
an anti-holomorphic part to $H^1(X_\varphi;\CC_\xi)$ for orbifolds.

We consider now some computations of meromorphic extensions for the particular case
$X:=\bp^1\setminus\{p_1,\dots,p_n\}$, $n>0$, i.e., $\cD\neq \emptyset$ is the reduced divisor supported on 
$\{p_1,\dots,p_n\}$. The group $G:=\pi_1(X)$ is generated by meridians
$\mu_j$, $j=1,\dots,n$. For a suitable choice of these meridians the only relation
is $\mu_1\cdot\ldots\cdot \mu_n=1$. Let us fix a character $\xi\in\bt_G$; an extension $\bar{L}_\xi$ to $\bp^1$
of $L_\xi:=\bc_\xi\otimes\cO_X$ (with a meromorphic extension $\bar{\nabla}$ of the connection of $L_\xi$)
is determined by the choice of $\alpha_j\in \CC$, $j=1,\dots,n$, 
such that $\xi(\mu_j)=\exp(-2\sqrt{-1}\pi\alpha_j)$. Note that $k:=-\sum_{j=1}^n \alpha_j\in\bz$. From these choices,
the line bundle $\bar{L}_\xi$ admits a \emph{multivalued flat meromorphic section}~$\sigma$ having \emph{complex order} 
$\alpha_j$ at $p_j$. We deduce from this fact that $\bar{L}_\xi\cong\cO_{\bp^1}(-k)$.
Using this idea for arbitrary Riemann surfaces we obtain this useful result which is well known.

\begin{prop}\label{prop-neg}
Let $X$ be a quasi-projective Riemann surface with compactification~$\bar{X}$, $n:=\#(\bar{X}\setminus X)$. 
Let $\xi$ be a character on $X$ and let $\tilde{L}_\xi$ be the Deligne extension of~$L_\xi$ to~$\bar X$. 
The following results hold:
\begin{enumerate}
\enet{\rm(\arabic{enumi})} 
\item If $n=0$, then $\deg\tilde{L}_\xi=0$;
\item if $n>0$, then $-n< \deg\tilde{L}_\xi\leq 0$;
\item if $n>0$ and $\xi$ is unitary, then $\xi$ is trivial if and only if $\deg\tilde{L}_\xi= 0$;
\item if $n=0$ and $\xi$ is unitary, then $\xi$ is trivial if and only if $\tilde{L}_\xi\cong\cO_{\bar{X}}$;
\item $\tilde{L}_\xi$ admits non-zero holomorphic sections if and only if $\tilde{L}_\xi\cong\cO_{\bar{X}}$.
\end{enumerate}
\end{prop}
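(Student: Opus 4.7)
My strategy centers on a single degree formula and then uses it to read off all five statements. The key identity is
\[
\deg \tilde L_\xi = -\sum_{j=1}^{n}\alpha_j,
\]
where $\alpha_j$ is the Deligne residue of $\bar\nabla$ at $p_j$, normalized so that $\operatorname{Re}(\alpha_j)\in[0,1)$ and $\exp(-2\pi\sqrt{-1}\alpha_j)=\xi(\mu_j)$. To establish this, I would follow the recipe sketched in the paragraph just before the proposition: any suitable extension $\bar L_\xi$ admits a multivalued flat meromorphic section $\sigma$ whose complex order at each $p_j$ is precisely $\alpha_j$, and comparing $\sigma$ via any local holomorphic trivialization to a genuine single-valued meromorphic section of the bundle on $\bar X$ produces the identity. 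Note that $\sum_j\alpha_j\in\bz$ automatically, because the product of meridians is a commutator in $\pi_1(X)$, hence vanishes in $H_1(X;\bz)$, forcing $\prod_j\xi(\mu_j)=1$ and making the right-hand side integral.

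Statements~(1) and~(2) would then fall out by inspection of this formula. For~(1), $n=0$ yields an empty sum. For~(2), the Deligne convention gives $\operatorname{Re}\bigl(\sum_j\alpha_j\bigr)\in[0,n)$, and integrality pins this sum down to one of $0,1,\dots,n-1$, so $-n<\deg\tilde L_\xi\leq 0$.

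For the unitary cases~(3) and~(4), each $\alpha_j$ is forced to be real in $[0,1)$. In~(3), $\deg\tilde L_\xi=0$ is then equivalent to all $\alpha_j=0$, i.e.\ to the vanishing of the monodromy on every meridian; in the genus-zero setting highlighted just above the proposition this already forces $\xi$ itself to be trivial since $\pi_1(\bar X)=1$, while in the general case the remaining step is the rank-one bijection $\Hom(\pi_1(\bar X),U(1))\cong\operatorname{Pic}^0(\bar X)$ between unitary characters and degree-zero holomorphic line bundles. The same bijection is precisely the content of~(4), since when $n=0$ the Deligne extension is just the flat holomorphic line bundle attached to $\xi$, trivial as a bundle if and only if $\xi$ is trivial as a character.

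Finally, for statement~(5) I would argue by divisor degree: a nonzero global section $s$ of $\tilde L_\xi$ has zero divisor of degree $\deg \tilde L_\xi\leq 0$ by~(1)--(2), but effective divisors have nonnegative degree, so both must vanish; then $s$ is nowhere vanishing and trivializes $\tilde L_\xi$. The converse is immediate. The one genuinely delicate step in this plan is Step~1, where the global algebraic invariant $\deg\tilde L_\xi$ must be extracted from the purely local analytic residue data, especially since $\alpha_j$ may be non-real in the non-unitary range; once that identity is in place, the remaining statements reduce to arithmetic of the residues together with a single invocation of the classical rank-one Hodge-theoretic dictionary between unitary characters and $\operatorname{Pic}^0$.
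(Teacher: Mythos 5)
Your route is essentially the paper's: the text only carries out the residue computation explicitly for $\bp^1\setminus\{p_1,\dots,p_n\}$ and then declares the general statement well known, and your identity $\deg\tilde{L}_\xi=-\sum_j\alpha_j$, the arithmetic of Deligne residues, the correspondence between unitary characters of $\pi_1(\bar{X})$ and $\operatorname{Pic}^0(\bar{X})$, and the effective-divisor argument for item (5) constitute exactly that ``well known'' proof; items (1), (2), (4), (5) are fine as you argue them. For the degree identity itself in positive genus, your comparison of the multivalued flat section with a single-valued meromorphic section is better phrased through the residue theorem: for any meromorphic section $s$ of $\tilde{L}_\xi$ the quotient $\bar{\nabla}s/s$ is a single-valued meromorphic $1$-form whose residue at a point off $\cD$ is $\operatorname{ord}_p(s)$ and at $p_j$ is $\operatorname{ord}_{p_j}(s)+\alpha_j$; summing residues to zero gives the formula and automatically takes care of the monodromy of $\sigma$ around the handles of $\bar{X}$, which your trivialization argument glosses over.

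The genuine gap is in item (3). From $\deg\tilde{L}_\xi=0$ and unitarity you correctly conclude that all $\alpha_j=0$, i.e.\ that $\xi$ factors through $\pi_1(\bar{X})$; but the bijection $\Hom(\pi_1(\bar{X}),U(1))\cong\operatorname{Pic}^0(\bar{X})$ then only identifies $\tilde{L}_\xi$ with the (possibly nontrivial) degree-zero bundle attached to that character --- it does not force $\xi=\one$ unless one also knows $\tilde{L}_\xi\cong\cO_{\bar{X}}$. In fact the implication ``$\deg\tilde{L}_\xi=0\Rightarrow\xi$ trivial'' fails whenever $g(\bar{X})\geq 1$ and $\xi$ is unramified at the punctures: the paper's own Example~\ref{ex-elliptic} (elliptic curve minus one point, so $n=1>0$) exhibits a nontrivial unitary $\psi$ whose Deligne extension has degree~$0$. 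So your argument for (3), as written, only proves it in the genus-zero situation of the discussion preceding the proposition (where $\pi_1(\bar{X})=1$ and triviality of the meridian values does give $\xi=\one$); in general the statement one needs --- and the one actually invoked later, e.g.\ in Step~4 of the proof of Proposition~\ref{prop-iso} --- is the equivalence of triviality of $\xi$ with $\tilde{L}_\xi\cong\cO_{\bar{X}}$, which your items (4) and (5) already yield. You should either restrict (3) to $g=0$ or prove it in that bundle-theoretic form; invoking the unitary correspondence alone does not close the step.
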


Let us continue for a moment our discussion above where the Deligne extension $\tilde{L}_\xi$ for the unitary 
character $\xi$ has been fixed (hence $\alpha_j\in\RR$). If $\xi$ is not the trivial character, then $0<k<n$.
Using Theorem~\ref{prop-dlg}\ref{prop-dlg-unit}, one has 
$H^{\cO}_\xi=H^0(\bp^1;\Omega_{\bp^1}^1(\log\cD)\otimes\tilde{L}_\xi)$. 
Since $\Omega_{\bp^1}^1(\log\cD)\otimes\tilde{L}_\xi=\cO_{\bp^1}(-2+n-k)$,  one can deduce that $\dim H^{\cO}_\xi=n-k-1$.
Also $H^{\overline{\cO}}_\xi=H^1(\bp^1;\tilde{L}_\xi)$; by Serre
duality, this space has the same dimension as $H^0(\bp^1;\Omega_{\bp^1}^1\otimes(\tilde{L}_\xi)^{-1})$
and $\Omega_{\bp^1}^1\otimes(\tilde{L}_\xi)^{-1}=\cO_{\bp^1}(-2+k)$. Hence $\dim H^{\overline{\cO}}_\xi=k-1$.
Also note that $\dim H^{\overline{\cO}}_{\bar{\xi}}=(n-k)-1$ by Serre duality, which agrees with the
isomorphism in Theorem~\ref{prop-tm} for the unitary case. See also~\cite{li:09} for this kind of computations.


Following these ideas, a more  general result holds.

\begin{prop}\label{prop-hol-rs}
Let $X$ be a quasi-projective Riemann surface, such that $\bar{X}$ is a curve of genus~$g$, 
and $\cD:=\bar{X}\setminus X$ is a reduced effective divisor of cardinality $n$.  
Let $G:=\pi_1(X)$ and fix $\xi\in\bt_G\setminus\{\one\}$. Let $k$ be the integer which is the
negative of the sum of residues associated with~$\xi$.
Then, $H^{\cO}_\xi\neq 0$ except in the following cases:
\begin{enumerate}
\enet{\rm(\arabic{enumi})}
\item $g=0$ and $n\leq 2$;
\item $g=1$ and $n=0$;
\item $g=0$, $n\geq 3$ and $k=n-1$ (in particular, $X=X^\xi$);
\item $g=1$, $n=1$ and $\xi=\exp(\omega)$ for $\omega\in H^0(\bar{X},\Omega^1_{\bar{X}}(\log\cD))=
H^0(\bar{X},\Omega^1_{\bar{X}})=H^0(\bar{X},\cO_{\bar{X}})$.
\end{enumerate}
\end{prop}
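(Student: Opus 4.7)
The plan is to use Deligne's theorem to represent $H^\cO_\xi$ as the cokernel of a connection map between finite-dimensional spaces of global sections on $\bar X$, and then run a case analysis on the pair $(g,n)$.

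Concretely, I would fix an integrally unramified unitary-holomorphic decomposition $\xi=\psi\exp(\omega)$ as in Section~\ref{sec-deligne} (for unitary $\xi$ one simply takes $\omega=0$), and let $\tilde L_\psi$ be the Deligne extension of $L_\psi$. Combining Theorem~\ref{prop-dlg} with Lemma~\ref{lem-unram}, I identify $H^\cO_\xi$ with the cokernel of
\[
\nabla_\omega=\bar\nabla+\wedge\omega\colon H^0(\bar X;\tilde L_\psi)\longrightarrow H^0(\bar X;\Omega^1_{\bar X}(\log\cD)\otimes\tilde L_\psi).
\]
By Proposition~\ref{prop-neg} the domain has dimension at most one, with equality exactly when $\tilde L_\psi\cong\cO_{\bar X}$, i.e.~when $\psi$ is trivial (which, since $\xi\neq\one$, forces $\omega\neq 0$). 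The target is $H^0$ of a line bundle of degree $2g-2+n-k$, where $k=-\deg\tilde L_\psi$ matches the integer in the statement, and Riemann--Roch makes its dimension explicit.

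I would then analyze by $(g,n)$. For $g\geq 2$, Riemann--Roch gives target dimension at least $g\geq 2$, strictly exceeding the domain, so $H^\cO_\xi\neq 0$. For $g=1$ and $n=0$, the triviality $\Omega^1_{\bar X}\cong\cO_{\bar X}$ forces $\nabla_\omega$ to be surjective in every subcase, yielding exception~(2). For $g=1$ and $n=1$, the puncture meridian is a commutator so $\xi$ cannot ramify and $\omega$ is holomorphic on $\bar X$; the target has dimension one and the cokernel vanishes precisely when $\psi$ is trivial and $\omega\neq 0$, giving exception~(4). For $g=1$ and $n\geq 2$, the target has degree $n-k\geq 1$ on the elliptic curve, hence dimension at least $n-k$, strictly exceeding the domain. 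For $g=0$ and $n\leq 1$ the fundamental group is trivial; for $g=0$ and $n=2$ integral unramification forces $\psi$ non-trivial with $k=1$ and the target is $\cO_{\bp^1}(-1)$, yielding exception~(1). Finally, for $g=0$ and $n\geq 3$, the vanishing $H^0(\bp^1,\Omega^1)=0$ combined with integral unramification forces $\psi$ non-trivial (otherwise $\omega=0$ and $\xi=\one$), so the domain vanishes and the target $H^0(\bp^1;\cO(n-2-k))$ is zero exactly when $k=n-1$; the bounds $1\leq k\leq \#\cD^\psi-1\leq n-1$ combined with $k=n-1$ then force $\#\cD^\psi=n$, i.e.~$X=X^\xi$, giving exception~(3).

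The most delicate step will be the boundary bookkeeping in the cases $g=1,n=1$ and $g=0,n\geq 3$ with $k=n-1$. In these one must simultaneously keep track of the allowed range of $k$ given by Proposition~\ref{prop-neg}, the ramification data encoded by the integrally unramified decomposition, and the precise action of $\nabla_\omega$ when $\psi$ is trivial with $\omega\neq 0$. Lemma~\ref{lem-unram} is used throughout to ensure that $\omega$ has no residues outside $\cD^\xi$ and that the degree computation for $\tilde L_\psi$ is accurate.
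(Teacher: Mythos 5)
Your overall mechanism is the right one, and it is in fact the paper's: identify $H^{\cO}_\xi$ with the cokernel of a map between spaces of global sections on $\bar X$ via \eqref{spseq}, bound the two degrees using Proposition~\ref{prop-neg} and Riemann--Roch, and run the case analysis on $(g,n)$ (the cases $g=0,n\le 2$ and $g=1,n=0$ being trivial since then $\pi_1(X)$ is abelian and $H^1(X;\bc_\xi)=0$ for $\xi\ne\one$). The difference is that the paper applies \eqref{spseq} to the Deligne extension $\tilde L_\xi$ of $\xi$ \emph{itself} (which is automatically suitable, its residues having real parts in $[0,1)$), so that by construction $\deg\tilde L_\xi=-k$ with exactly the $k$ of the statement and $0\le k<n$ by Proposition~\ref{prop-neg}; you instead pass to an integrally unramified decomposition $\xi=\psi\exp(\omega)$ and work with $(\tilde L_\psi,\nabla_\omega)$ as in Remark~\ref{rem-nabla}.

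This substitution creates a genuine gap: your claim that $-\deg\tilde L_\psi$ ``matches the integer in the statement'' is false in general. The residues of $\nabla_\omega$ are the (real) Deligne residues $\beta_j\in[0,1)$ of $\psi$ shifted by the residues of $\omega$, whose real parts are unconstrained; so $(\tilde L_\psi,\nabla_\omega)$ is in general \emph{not} the Deligne extension of $\xi$, and the integer corrections needed to renormalize the real parts into $[0,1)$ need not sum to zero, so $-\deg\tilde L_\psi$ and $k$ can differ. Concretely, on $X=\bp^1\setminus\{p_1,p_2,p_3\}$ take $\psi$ unitary with Deligne residues $(\tfrac12,\tfrac3{10},\tfrac15)$ (so $\deg\tilde L_\psi=-1$) and $\omega$ logarithmic with residues of real parts $(-1,\tfrac12,\tfrac12)$ and imaginary parts $(1,-1,0)$; then $\xi=\psi\exp(\omega)$ is non-unitary, the decomposition is integrally unramified (all three local monodromies of $\psi$ and of $\xi$ are nontrivial), but the Deligne residues of $\xi$ have real parts $(\tfrac12,\tfrac45,\tfrac7{10})$, so $\deg\tilde L_\xi=-2$ and the statement's $k$ equals $2=n-1$: this $\xi$ falls in exception (3) and the paper's computation (domain and target of degrees $-2$ and $-1$) gives $H^{\cO}_\xi=0$, whereas your cokernel (domain $\cO_{\bp^1}(-1)$, target $\cO_{\bp^1}(0)$) is one-dimensional. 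Since $\dim H^1(X;\bc_\xi)=1$, the two extensions split the same space in opposite ways --- precisely the non-canonicity flagged in Remark~\ref{rem-mhs} --- so the quantity you compute depends on the chosen decomposition and does not coincide with the $H^{\cO}_\xi$ and the $k$ of the statement; in particular your criterion for exception (3) would misclassify the character above. The repair is simply to drop the unitary-holomorphic decomposition here and argue directly with $\tilde L_\xi$ and $\bar\nabla$, after which your $(g,n)$ bookkeeping, including the boundary cases $g=1$, $n=1$ and $g=0$, $k=n-1$, goes through essentially verbatim and reproduces the paper's proof.
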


\begin{proof}
The first two cases are immediate since $G$ is abelian.

We recall, see \eqref{spseq}, that $H^{\overline{\cO}}_\xi$ is the cokernel of 
$$
A:=H^0(\bar{X},\tilde{L}_\xi)\overset{\bar{\nabla}}{\longrightarrow} 
H^0(\bar{X},\Omega_{\bar{X}}^1(\log\cD)\otimes\tilde{L}_\xi)=:B,
$$
where $\tilde{L}_\xi$ is, as usual, the Deligne extension of $L_\xi$ to $\bar X$.
Recall that $a:=\deg\tilde{L}_\xi=-k$ and $b:=\deg\Omega_{\bar{X}}^1(\log\cD)\otimes\tilde{L}_\xi=2 g-2+n-k$;
moreover, $0\leq k< n$ if $n>0$ and $k=0$ if $n=0$. By Proposition~\ref{prop-neg}, $A\neq 0$ if and only if
$\tilde{L}_\xi\cong\cO_{\bar{X}}$, in particular, $\dim A=1$. We distinguish several cases.

If $g>1$, since $b>0$ then $B\neq 0$. Then, we get the statement when $A=0$. If $A\neq 0$, it is enough
to prove that $\dim B>1$. In this case, $B$ is the space of holomorphic sections of $\Omega_{\bar{X}}^1(\log\cD)$
which admits the space of holomorphic $1$-forms of $\bar{X}$ as a subspace. Since such a space has dimension $g>1$ 
we are done.

For $g=1$, we assume $n>0$. Hence $0\leq k<n$ and  $b> 0$; if $A=0$ we are done. 
If $A\neq 0$, we have in particular that $B$ is the space of holomorphic sections of 
$\Omega_{\bar{X}}^1(\log\cD)=\cO_{\bar{X}}(\cD)$.
If $n\geq 2$, then $\dim B>1$ and we are done. If $n=1$, then $\dim B=1$ and hence $H^{\cO}_\xi=0$. 
Note that $\tilde{L}_\xi\cong\cO_{\bar{X}}$ is equivalent to the property $\xi=\exp(\omega)$ for 
$\omega\in H^0(\bar{X},\Omega^1_{\bar{X}}(\log\cD))$.

For $g=0$, we assume $n>2$ in which case $b=n-k-2\geq -1$ and hence $\dim B=n-k-1$.
If $k=0$, we have immediately that $\dim B>1=\dim A$. If $k>0$, then $A=0$ and
if  $k<n-1$, then $\dim B>0$.
\end{proof}

\begin{cor}\label{cor-hol-1}
Let us assume that $H^1(X;\bc_\xi)\neq 0$ and $H^{\cO}_\xi= 0$. Consider
$p\in X$, $Y:=X\setminus\{p\}$, and $\xi_1$ the induced character in~$Y$.
Then, $H^{\cO}_{\xi_1}\neq 0$ (on $Y$).
\end{cor}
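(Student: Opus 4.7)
The plan is to use Proposition~\ref{prop-hol-rs} to identify the very few possible $(X,\xi)$ satisfying $H^\cO_\xi=0$ and $H^1(X;\bc_\xi)\neq 0$, and then to show that each of them is cured by removing a point. First I would rule out the trivial character: $H^\cO_\one=H^0(\bar X,\Omega^1_{\bar X}(\log\cD))$ has dimension $g$ when $n=0$ and $g+n-1$ when $n\geq 1$, and this vanishes only for $(g,n)\in\{(0,0),(0,1)\}$, in which case $H^1(X;\bc)=0$ as well. Hence $\xi\neq\one$ and Proposition~\ref{prop-hol-rs} applies. Its cases (1) and (2) concern fundamental groups $\{1\}$, $\bz$ or $\bz^2$ (sphere with at most two punctures, compact elliptic curve), for which a direct Fox-calculus computation as in Example~\ref{ex-cw} shows $H^1(X;\bc_\xi)=0$ for every nontrivial $\xi$. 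So $(X,\xi)$ must sit in case (3) or case (4) of the proposition.

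Next I would track how the Deligne data behave under removing a point. Set $Y:=X\setminus\{p\}$ and let $\xi_1$ be the character induced by $\xi$. The meridian $\mu_p\subset Y$ bounds a small disk in $X$, hence is null-homotopic in $\pi_1(X)$, so $\xi_1(\mu_p)=1$. Thus the Deligne residue of $\xi_1$ at $p$ may be taken to be $0\in[0,1)$, while the residues at the remaining punctures agree with those of $\xi$. Consequently the integer $k_Y$ attached to $\xi_1$ on $Y$ (the negative of the sum of Deligne residues) equals $k_X$, whereas $g_Y=g_X$ and $n_Y=n_X+1$.

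Finally I would verify that $(Y,\xi_1)$ escapes every exceptional case of Proposition~\ref{prop-hol-rs}, which yields $H^\cO_{\xi_1}\neq 0$. If $(X,\xi)$ is in case (3), i.e.\ $g=0$, $n_X\geq 3$, $k_X=n_X-1$, then $(Y,\xi_1)$ has $g=0$, $n_Y\geq 4$ and $k_Y=n_X-1=n_Y-2<n_Y-1$, so case (3) for $Y$ is violated; the values of $g$ and $n_Y$ rule out cases (1), (2), (4) trivially. If $(X,\xi)$ is in case (4), so $g=1$ and $n=1$, then $(Y,\xi_1)$ has $g=1$ and $n_Y=2$, while the only genus-one exceptions in the proposition require $n\in\{0,1\}$. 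In both scenarios Proposition~\ref{prop-hol-rs} applied to $(Y,\xi_1)$ gives the desired $H^\cO_{\xi_1}\neq 0$. No step is particularly difficult; the only subtle point is tracking the Deligne residues on passage from $X$ to $Y$, so that $k$ is preserved while $n$ grows by one---exactly what pushes $(Y,\xi_1)$ out of the most restrictive exception (3).
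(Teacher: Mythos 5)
Your argument is correct and is essentially the paper's own proof, just written out in full: the paper also reduces to the exceptional cases of Proposition~\ref{prop-hol-rs}, dismisses $g=1$ as trivial (since $n_Y=2$ is no longer exceptional), and for $g=0$ observes that $k=n-1$ forces ramification at every puncture, which fails at $p$ because $\mu_p$ is null-homotopic in $X$, so $k$ stays the same while $n$ grows. Your extra bookkeeping (excluding $\xi=\one$ and cases (1)--(2), and tracking the Deligne residues explicitly) only makes explicit what the paper leaves implicit.
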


\begin{proof}
If $g=1$, it is trivial. If $g=0$, note that the maximum value of $k$ equals $n-1$ 
and it can be obtained only when $\xi$ ramifies around all the punctures.
\end{proof}

\begin{example}\label{ex-elliptic}
In Example~\ref{ex-notram}, we showed that the twisted cohomology of a Riemann surface $X\subset\bar{X}$ with 
respect to a character $\xi$ which does not ramify around a point $p\in\bar{X}$
changes if we replace $X$ by $X\cup\{p\}$. This example aims the same purpose, but is more subtle and points
at the root of the relation between the twisted cohomology of a character $\xi$ on $X$ and $\xi_0$ on $X^\xi$.

Let $\bar X$ be an elliptic curve, $p$ a point on it and $X=\bar X\setminus\{p\}$.
The space of characters over $X$ and $\bar{X}$ coincide. Let us fix $\one\neq\xi\in\bt_{\pi_1(X)}$ and let $\xi_0$ be
the corresponding character in $\bt_{\pi_1(\bar{X})}$. Since $\pi_1(\bar{X})$ is Abelian, $H^1(\bar{X};\bc_{\xi_0})=0$.

Let us decompose $\xi=\psi\exp(\omega)$, where $\psi$ is unitary and
$\omega\in H^0(\bar{X},\Omega_{\bar{X}}^1(\log p))= H^0(\bar{X},\Omega_{\bar{X}}^1)$. The Deligne extension
of $\psi$ is a degree~$0$ line bundle $\tilde{L}_\psi$ over $\bar{X}$ which is trivial if and only if $\psi=\one$.
The short exact sequence 
$$
0\to \Omega_{\bar{X}}^1\otimes\tilde{L}_\psi\to\Omega_{\bar{X}}^1(\log p)\otimes\tilde{L}_\psi\to\bc_p\to 0
$$
induces (note that $\Omega_{\bar{X}}^1\cong\cO_{\bar{X}}$):
\begin{equation*}
0\to H^0(\bar{X};\tilde{L}_\psi)\to 
H^0(\bar{X};\cO( p)\otimes\tilde{L}_\psi)\to\bc
\to H^1(\bar{X};\tilde{L}_\psi)\to H^1(\bar{X};\cO( p)\otimes\tilde{L}_\psi)\to 0.
\end{equation*}
Let us assume that $\psi\neq\one$. Then, this sequence implies (using Serre Duality and the Riemann-Roch formula):
\begin{equation*}
H^0(\bar{X};\tilde{L}_\psi)=H^1(\bar{X};\tilde{L}_\psi)=H^1(\bar{X};\cO( p)\otimes\tilde{L}_\psi)=0,\quad
H^0(\bar{X};\cO( p)\otimes\tilde{L}_\psi)\cong \bc.
\end{equation*}
Note also that $H^0(\bar{X};\tilde{L}_\psi)=0=H^1(\bar{X};\tilde{L}_\psi)$. Applying
\eqref{spseq} and \eqref{spseq1} we obtain $H_\xi^{\cO}\cong\bc$
and $H_\xi^{\overline{\cO}}=0$.

If $\psi=\one$ the sequence implies:
\begin{equation*}
H^0(\bar{X};\cO)\cong H^0(\bar{X};\cO( p))\cong H^1(\bar{X};\cO)\cong\bc,\quad
 H^1(\bar{X};\cO( p))=0.
\end{equation*}
Since $H^0(\bar{X};\cO)\cong H^1(\bar{X};\cO)\cong\bc$, applying
\eqref{spseq} and \eqref{spseq1} we obtain $H_\xi^{\cO}=0$
and $H_\xi^{\overline{\cO}}\cong\bc$.

In both cases we obtain that $H^1(X;\bc_\xi)\cong\bc$ but the decomposition into the
spaces $H_\xi^{\cO}$ and $H_\xi^{\overline{\cO}}$ depends on the analytic type of $\bar{X}$.
\end{example}

The situation described in the previous example holds in a more general setting. We
are in position to state the last ingredient needed for the proof of Proposition~\ref{prop-iso}.

\begin{prop}\label{prop-iso0}
Let $X$ be a Riemann surface and let $\xi$ be a character on $X$
such that the Deligne extension $\tilde{L}_\psi$ associated with the unitary part of 
a unitary-holomorphic decomposition of $\xi$ is non-trivial. 
Then
$H^{\overline{\cO}}_{\xi}= H^{\overline{\cO}}_{\xi_0}=H^1(\bar{X};\tilde{L}_\psi)$
where $\xi_0$ is the character induced by~$\xi$ on~$X^{\xi}$.
\end{prop}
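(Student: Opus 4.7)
The plan is to compute $H^{\overline{\cO}}_\xi$ directly via Deligne's theory and show that the potential obstruction vanishes by a Serre duality argument, leaving the full space $H^1(\bar X;\tilde L_\psi)$; the same computation on $X^\xi$ then identifies $H^{\overline{\cO}}_{\xi_0}$ with the same space.

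\textbf{Setup (reduction to an integrally unramified decomposition).} Starting from a unitary-holomorphic decomposition $\xi=\psi\exp(\omega)$, one first normalizes it (following the technique in the proof of Lemma~\ref{lema-char}) to an integrally unramified one, observing that the modification does not affect whether $\tilde L_\psi$ is trivial as a line bundle on~$\bar X$. By Remark~\ref{rem-nabla}, the twisted cohomology of $\xi$ on $X$ is then computed from the two-term complex
$$
\tilde L_\psi\overset{\nabla_\omega}{\longrightarrow}\Omega^1_{\bar X}(\log\cD)\otimes\tilde L_\psi,
$$
and by Lemma~\ref{lem-unram}\eqref{lem-unram2} the hypotheses of Theorem~\ref{prop-dlg}\ref{prop-dlg-pz} are met, so
$$
H^{\overline{\cO}}_\xi=\ker\Bigl(H^1(\bar X;\tilde L_\psi)\xrightarrow{\nabla_\omega}H^1(\bar X;\Omega^1_{\bar X}(\log\cD)\otimes\tilde L_\psi)\Bigr).
$$

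\textbf{Main step (vanishing of the target).} I claim $H^1(\bar X;\Omega^1_{\bar X}(\log\cD)\otimes\tilde L_\psi)=0$ whenever $\tilde L_\psi\not\cong\cO_{\bar X}$. By Serre duality on the compact Riemann surface~$\bar X$, this group is dual to $H^0(\bar X;\cO_{\bar X}(-\cD)\otimes\tilde L_\psi^{-1})$. Setting $k:=-\deg\tilde L_\psi$, the degree of this line bundle is $-n+k$. Proposition~\ref{prop-neg} gives $0\le k<n$ when $n>0$ and $k=0$ when $n=0$. If $n>0$, the hypothesis $\tilde L_\psi$ non-trivial forces $\psi\neq\one$ and hence $k>0$ by Proposition~\ref{prop-neg}(3); thus $-n+k<0$ and there are no global sections. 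If $n=0$, then $\cO_{\bar X}(-\cD)\otimes\tilde L_\psi^{-1}=\tilde L_\psi^{-1}$ is a non-trivial degree-zero line bundle on a compact Riemann surface (using Proposition~\ref{prop-neg}(4),(5)), so again $H^0=0$. Consequently the kernel above is the whole domain, giving $H^{\overline{\cO}}_\xi=H^1(\bar X;\tilde L_\psi)$.

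\textbf{Transfer to $X^\xi$.} By Lemma~\ref{lem-unram}\eqref{lem-unram3}, $\psi$ and $\omega$ come from the corresponding data $\psi_0$, $\omega_0$ on $X^\xi$, yielding an integrally unramified unitary-holomorphic decomposition $\xi_0=\psi_0\exp(\omega_0)$. Since $\psi$ has zero residue along the components of $\cD\setminus\cD^\xi$, the Deligne extension $\tilde L_{\psi_0}$ (on the same compactification $\bar X$, with log poles now only along $\cD^\xi$) coincides as a line bundle with $\tilde L_\psi$: a log-pole with residue zero is no pole at all. Hence $\tilde L_{\psi_0}$ is non-trivial too, and applying the preceding argument verbatim on $X^\xi$, with $\cD$ replaced by $\cD^\xi$, yields $H^{\overline{\cO}}_{\xi_0}=H^1(\bar X;\tilde L_{\psi_0})=H^1(\bar X;\tilde L_\psi)$. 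Combining the two equalities proves the proposition.

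\textbf{Expected obstacles.} The principal technical point is the first reduction to an integrally unramified decomposition, together with checking that the identifications in Steps 2 and 3 are natural (so that the two equalities live inside the same ambient object and can be composed). A secondary subtlety is verifying that the Serre duality argument handles uniformly the degenerate cases of low genus (especially $\bar X=\bp^1$ with small~$n$, and the elliptic case with $n=0$), where the numerology of Proposition~\ref{prop-neg} is tightest; the computations of \S\ref{sec-orb} and Proposition~\ref{prop-hol-rs} show that these edge cases pose no genuine difficulty once $\tilde L_\psi$ is assumed non-trivial.
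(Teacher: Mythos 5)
Your argument is essentially the paper's own proof: both identify $H^{\overline{\cO}}_{\xi}$ and $H^{\overline{\cO}}_{\xi_0}$ as kernels of wedge-with-$\omega$ maps out of the common source $H^1(\bar{X};\tilde{L}_\psi)$ and then kill the two targets $H^1(\bar{X};\Omega^1_{\bar{X}}(\log\cD')\otimes\tilde{L}_\psi)$, $\cD'\in\{\cD,\cD^\xi\}$, by Serre duality together with the degree bounds of Proposition~\ref{prop-neg}, using non-triviality of $\tilde{L}_\psi$ precisely in the degree-zero case $\cD^\xi=\emptyset$. The only differences are cosmetic: the paper simply takes $\omega\in H^0(\bar{X};\Omega^1_{\bar{X}}(\log\cD^\xi))$ from the start instead of your preliminary normalization (whose claimed invariance of the triviality of $\tilde{L}_\psi$ you do not substantiate, but which is not needed for the statement as the paper uses it), and your appeal to Proposition~\ref{prop-neg}(3) to get $k>0$ is superfluous, since $0\le k<n$ already makes $\deg\bigl(\cO_{\bar{X}}(-\cD)\otimes\tilde{L}_\psi^{-1}\bigr)$ negative.
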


\begin{proof}
Let us consider the chain $X\subset X^\xi\subset\bar{X}$ and the reduced divisors
$\cD:=\bar{X}\setminus X$ and $\cD^\xi:=\bar{X}\setminus X^\xi$. Since the statement
is trivial when $\cD=\cD^\xi$ we may assume $\cD^\xi\subsetneq\cD$.

We must consider the sequence \eqref{spseq1} applied to $\cD$ and $\cD^\xi$ for the Deligne
extension $\tilde{L}_\psi$ associated with $\xi=\psi\exp(\omega)$, where $\psi$ is unitary
and $\omega\in H^0(\bar{X};\Omega_{\bar{X}}^1(\log\cD^\xi))$. By hypothesis $\tilde{L}_\psi\not\cong\cO_{\bar{X}}$.

This sequence reduces to a morphism, defined by the exterior product by $\omega$,
where the source $H^1(\bar{X};\tilde{L}_\psi)$ is common for $\cD$ and $\cD^\xi$. 
Let us study the targets.
For $\cD$ we have $H^1(\bar{X};\Omega_{\bar{X}}^1(\log\cD)\otimes\tilde{L}_\psi)$, 
which is isomorphic, by Serre Duality, to the dual of 
$H^0(\bar{X};\cO(-\cD)\otimes\tilde{L}_\psi^{-1})$. Since $\deg\cD>0$
and $\deg\tilde{L}_\psi^{-1}< \deg\cD$, see Proposition~\ref{prop-neg}, this space is trivial.

Let us now consider $H^1(\bar{X};\Omega_{\bar{X}}^1(\log\cD^\xi)\otimes\tilde{L}_\psi)$, i.e., 
by duality $H^0(\bar{X};\cO(-\cD^\xi)\otimes\tilde{L}_\psi^{-1})$. Note that the degree $e$
of this line bundle is non-positive. If $e<0$, then the space is trivial again. If $e=0$, then 
we have $\cD^\xi=\emptyset$; since $\tilde{L}_\psi$ is non-trivial, the space is also trivial.

Then, $H^{\overline{\cO}}_{\xi}= H^1(\bar{X};\tilde{L}_\psi)= H^{\overline{\cO}}_{\xi_0}$.
\end{proof}


\begin{proof}[Proof of Proposition{\rm~\ref{prop-iso}}]
The result is trivial for unitary characters, so we assume that $\xi$
is non-unitary. We break the proof in several steps.

\begin{step}\label{step1}
If $X$ is a Riemann surface. 
\end{step}

After Proposition~\ref{prop-iso0} it is enough to prove that either $\xi$ or $\bar{\xi}$
admits a unitary-holomorphic decomposition such that 
the Deligne extension $\tilde{L}_\psi$ associated with the unitary part is non-trivial.

Let us assume that it is not the case for $\xi$. 
Then, $\cD^\xi=\emptyset$, $\bar{X}=X^\xi$ (see Remark~\ref{rem-nabla}), the unitary part 
of $\xi$ is $\one$ (see Proposition~\ref{prop-neg}) and $\xi=\exp(\omega)$, 
where $0\neq\omega\in H^0(\bar{X};\Omega_{\bar{X}}^1)$. We deduce that
$\bar{\xi}=\exp(\bar{\omega})$, where $\bar{\omega}\in H^1 (\bar{X};\cO_{\bar X})$.

Let us consider the decomposition $\bar{\omega}=\alpha+\beta$ associated with 
$$
H^1(\bar{X};\bc)=2\sqrt{-1}\pi H^1(\bar{X};\br)\oplus H^0(\bar{X};\Omega_{\bar{X}}^1).
$$
Note that the spaces 
$H^{\overline{\cO}}_{\xi}$ and $H^{\overline{\cO}}_{\xi_0}$ do not change if 
we replace $\xi$ by $\exp(t\omega)$, $t\in\br^*$; we may assume that 
$\alpha\notin 2\sqrt{-1}\pi H^1(\bar{X};\bz)$, and the unitary-holomorphic decomposition 
$\bar{\xi}=\exp(\alpha)\exp(\beta)$ satisfies the hypothesis in Proposition~\ref{prop-iso0}.
The result follows for~$\bar{\xi}$ and we have achieved Step~\ref{step1}.

\begin{step}
Preparation of an induction process when $X$ is a quasiprojective surface.
\end{step}

Using the arguments of Step~\ref{step1}, we obtain
that either $\xi$ or $\bar \xi$ fits in a unitary-holomorphic decomposition
such that the Deligne extension of the unitary part is non-trivial.
We can assume that it is the case for $\xi=\psi\exp(\omega)$, $\psi$ unitary
with $\tilde{L}_\psi\not\cong\cO_{\bar{X}}$ (in particular, $\psi\neq\one$) and 
$\omega\in H^0(\bar{X};\Omega_{\bar{X}}^1(\log\cD^\xi))$.

We prove the statement by induction on the number $n$ of 
irreducible components of $\cD':=\cD-\cD^\xi$.
If $n=0$ the statement is trivially true. Let us assume $n>0$ and let us fix an irreducible
component $D$ of $\cD'$. Let us denote $\cD'':=\cD-D$ and $\xi''$ the character
induced by $\xi$ on $X'':=\bar{X}\setminus\cD''$. Let
$\check{D}:=D\setminus\cD''$.
Also set: 
$$\cL''=\Omega_{\bar{X}}^1(\log\cD'')\otimes \tilde{L}_\psi,\quad 
\cL=\Omega_{\bar{X}}^1(\log\cD)\otimes \tilde{L}_\psi.
$$
Since $H^0(\bar{X},\tilde{L}_\psi)$, we have that 
$$
\begin{matrix}
H^{\cO}_\xi=&\ker\left(H^0(\bar{X},\cL)
\overset{\wedge \omega}{\longrightarrow} 
H^0(\bar{X},\Omega_{\bar{X}}^2(\log\cD)\otimes \tilde{L}_\psi)\right),\hfill\\
H^{\cO}_{\xi''}=&\ker\left(H^0(\bar{X},\cL'')
\overset{\wedge \omega}{\longrightarrow} 
H^0(\bar{X},\Omega_{\bar{X}}^2(\log\cD'')\otimes \tilde{L}_\psi)\right),\hfill\\
H^{\overline{\cO}}_\xi=&\ker\left(H^1(\bar{X},\tilde{L}_\psi)\overset{\wedge \omega}{\longrightarrow} H^1(\bar{X},\cL)
\right),\hfill\\
H^{\overline{\cO}}_{\xi''}=&
\ker\left(H^1(\bar{X},\tilde{L}_\psi)\overset{\wedge \omega}{\longrightarrow} H^1(\bar{X},\cL'')
\right).\hfill
\end{matrix}
$$ 
Both $\cL$ and $\cL''$ fit in the following short exact sequence 
$$
0\to\cL''\to \cL\to
i_*(\tilde{L}_\psi)_{|D}\to 0
$$
and the associated long exact sequence
\begin{equation}
\label{eq-lex}
0\to H^0(\bar{X},\cL'')\to H^0(\bar{X},\cL)\overset{(\spadesuit)}{\to} H^0(D;(\tilde{L}_\psi)_{|D})
\overset{(*)}{\to} H^1(\bar{X},\cL'')\to H^1(\bar{X},\cL). 
\end{equation}

\begin{step}\label{step3}
If the map~$(*)$ of \eqref{eq-lex} vanishes then $H^{\overline{\cO}}_\xi=H^{\overline{\cO}}_{\xi''}$.
\end{step}
By the exactness of \eqref{eq-lex} the map $H^1(\bar{X},\cL'')\to H^1(\bar{X},\cL)$
is injective; since $\wedge\omega$ factorizes through this mapping in the definition of
$H^{\overline{\cO}}_\xi$, it is immediate that $H^{\overline{\cO}}_\xi=H^{\overline{\cO}}_{\xi''}$.
We have proved Step~\ref{step3}

\begin{step}\label{step4}
If the restriction of $\psi$ to $\check{D}$ is non-trivial then $H^{\overline{\cO}}_\xi=H^{\overline{\cO}}_{\xi''}$.
\end{step}
The third term $H^0(D;(\tilde{L}_\psi)_{|D})$ of \eqref{eq-lex} vanishes by Proposition~\ref{prop-neg}
and hence, also~$(*)$ does. The statement of Step~\ref{step3} implies Step~\ref{step4}. 

\begin{aspt}
According to Step~\ref{step4}, from now on we assume that the restriction of $\psi$ to $\check{D}$ is trivial. 
\end{aspt}

The character $\psi$ acts trivially on $\pi_1(\check{D})$. By Proposition~\ref{prop-neg}
$(\tilde{L}_\psi)_{|D}\cong\cO_D$.
Since $H^0(D;\cO_D)\cong\bc$, either $(\spadesuit)$ or $(*)$ vanishes, i.e.,
either $H^{\cO}_{\xi}\cong H^{\cO}_{\xi''}$ or
$H^{\overline{\cO}}_{\xi}= H^{\overline{\cO}}_{\xi''}$.

\begin{step}\label{step5}
If $\dim H^1(X;\bc_{\psi})=\dim H^1(X'';\bc_{\psi''})$ then $H^{\overline{\cO}}_\xi=H^{\overline{\cO}}_{\xi''}$.
\end{step}

It is immediate from the above property.

\begin{aspt}
According to Step~\ref{step5}, from now on we will assume $\dim H^1(X;\bc_{\xi})>\dim H^1(X'';\bc_{\xi''})$.
\end{aspt}
\begin{step}\label{step6}
$\dim H^1(X;\bc_{\psi})-\dim H^1(X'';\bc_{\psi''})=1$.
\end{step}

The above arguments imply $\dim H^1(X;\bc_{\xi})-\dim H^1(X'';\bc_{\xi''})=1$.
By duality 
$$
\dim H_1(X;\bc_{\xi^{-1}})-\dim H_1(X'';\bc_{(\xi'')^{-1}})=1.
$$
Using a Mayer-Vi{\'e}toris 
exact sequence for $H_1(X'';\bc_{\xi})$, we obtain that a meridian $\gamma$ around 
$D$ defines a twisted cycle $\zeta_D$ which determines a non-trivial homology class 
in the kernel of the natural map
$H_1(X;\bc_{\xi})\to H_1(X'';\bc_{\xi''})$ induced by inclusion.

More precisely, let $N$ be a regular neighborhood of $\check{D}$, and note
that $X''=X\cup N$. Then $X\cap N$ has the homotopy type of a Seifert $3$-manifold $M$, in fact
a circle bundle over $\check{D}$ and $N$ has the homotopy type of~$\check{D}$. 
The character is trivial on both $M$ and $\check{D}$; recall that $\dim H_1(M,\bc)\geq\dim H_1(\check{D};\bc)$
and equality arises only when $\check{D}$ is compact and its Euler number is non-zero.
Since the map $H_1(X,\bc_{\xi})\to H_1(X'',\bc_{\xi''})$ is surjective, the exact sequence
%
$$
H_1(M,\bc_{\xi})\to H_1(X,\bc_{\xi})\oplus H_1(\check{D},\bc_{\xi})\to
H_1(X'',\bc_{\xi''}) \to 0,
$$
can be completed to a short exact sequence and $\dim H_1(M,\bc)=\dim H_1(\check{D};\bc)+1$.
These arguments also hold for the characters $\xi_t:=\psi\exp(t\omega)$, $t\in\bc^*$.
Since the character $\psi$ is limit of the characters~$\xi_t$, we obtain that
the class of $\zeta_D$ is a non-trivial element of $H_1(X;\bc_{\psi})$; since the difference
of dimensions is at most one, Step~\ref{step6} is achieved.

%
%

\begin{step}\label{step7}
The map~$(\spadesuit)$ is non-trivial. 
\end{step}

As stated in Remark~\ref{rem-mhs}, for the character~$\psi$ the decomposition is natural
and we can evaluate the elements of $H^{\overline{\cO}}_{\psi}$ in $H_1(X;\bc_{\psi})$
and the elements of~$H^{\overline{\cO}}_{\psi''}$ in~$H_1(X'';\bc_{\psi''})$. Both spaces are equal to
$H^1(\bar{X},\tilde{L}_\psi)$.
Since $\zeta_D$ is a trivial cycle of $H_1(X'';\bc_{\psi''})$, $\forall\alpha\in H^1(\bar{X},\tilde{L}_\psi)$,
we have that $\alpha(\zeta_D)=0$. Using the duality between twisted homology and cohomology,
there exists $\beta\in H^{\cO}_{\psi}=H^0(D;(\tilde{L}_\psi)_{|D})$ such that 
$\beta(\gamma)\neq 0$. Then $\beta$ is an element of $H^0(\bar{X},\cL)\setminus H^0(\bar{X},\cL'')$.
The exactness of \eqref{eq-lex} implies the statement of Step~\ref{step7}.

We conclude that $(*)$ vanishes and hence  
$H^{\overline{\cO}}_{\xi}\cong H^{\overline{\cO}}_{\xi''}$. 
One can replace $X$ by $X''$ and apply the induction hypothesis once again.
\end{proof}


\section{Characters and orbifold maps}\label{sec-char}

The main tool for the proof of Theorem~\ref{thmprin} is the following result which corresponds to
\cite[Proposition~V.1.4]{ara:97}, except for this statement a weaker hypothesis is required 
(only non-torsion characters as opposed to non-unitary). In this section the notations used in 
\S\ref{sec-prelim} and \ref{sec-orb} will be followed, that is, $X$ is a smooth quasi-projective 
surface, $G:=\pi_1(X)$, $\bar{X}$ is a smooth projective compactification of $X$ such that 
$\cD:=\bar{X}\setminus X$ is a normal crossing divisor, and $C_\varphi$ 
is an orbifold coming from a Riemann surface $C$.

\begin{thm}\label{thm-pullback}
Let $\xi\in\bt_G$ be a non-torsion character in $\cV_1(X)\neq\emptyset$
and $0\neq\theta\in H^1(X;\bc_\xi)$. Then there exist:
\begin{itemize}
\item an orbifold $C_\varphi$,
\item an orbifold map $f:X\to C_\varphi$,
\item a character $\xi_C\in\cV_1(C_\varphi)$,
\item and a cohomology class $\theta_C\in H^1(C_{\varphi};\bc_{\xi_C})$
\end{itemize}
such that
$\xi=f^*(\xi_C)$ and $\theta=f^*(\theta_C)$.
\end{thm}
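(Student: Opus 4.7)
The plan is to exploit the Deligne decomposition $H^1(X;\bc_\xi) = H^{\cO}_\xi \oplus H^{\overline{\cO}}_\xi$ of Theorem~\ref{prop-dlg} to write $\theta = \theta^{\cO} + \theta^{\overline{\cO}}$, and to prove the pullback statement for each summand separately. Once orbifold pullback data are produced for each pure piece, they can be combined by passing (if necessary) to a common refinement of the two orbifold targets. So it suffices to treat the case where $\theta$ is holomorphically pure and the case where it is anti-holomorphically pure.

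For the holomorphically pure case $0 \neq \theta \in H^{\cO}_\xi$, the favorable situation is when $\xi$ admits a strict unitary-holomorphic decomposition $\xi = \psi \exp(\omega)$, which by Lemma~\ref{lema-char}\ref{lema-char1} is automatic whenever $\xi$ is non-unitary. In that case Theorem~\ref{prop-dlg}\ref{prop-dlg-pz} represents $\theta$ by a section $\alpha \in H^0(\bar X; \Omega^1_{\bar X}(\log \cD) \otimes \tilde L_\psi)$ lying in the kernel of the twisted differential, which amounts to a wedge relation $\alpha \wedge \omega = 0$ with $\alpha$ a logarithmic form twisted by the Deligne extension. Apply the orbifold Castelnuovo--de Franchis--Arapura--Beauville result of Proposition~\ref{prop-ara-beau} to the pair $(\omega, \alpha)$ to produce an orbifold $C_\varphi$, a surjective orbifold morphism $f : X \to C_\varphi$, and classes $\omega_C$, $\alpha_C$ on $C_\varphi$ with $\omega = f^* \omega_C$ and $\alpha = f^* \alpha_C$. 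Set $\xi_C := \psi_C \exp(\omega_C)$, where $\psi_C$ is the unitary character on $\pi_1^{\text{\rm orb}}(C_\varphi)$ such that $\psi = f^* \psi_C$, and $\theta_C := [\alpha_C]$. Then $\xi = f^*\xi_C$, $\theta = f^*\theta_C$, and $\xi_C \in \cV_1(C_\varphi)$ because $\theta_C \neq 0$. The orbifold structure $\varphi$ is the maximal one, with $\varphi(p)$ equal to the gcd of the multiplicities of the components of $f^*(p)$ (Remark~\ref{rem-maximal}); this is exactly what is needed for $\xi$ to genuinely pull back from $\pi_1^{\text{\rm orb}}(C_\varphi)$ rather than merely from $\pi_1(C)$.

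The anti-holomorphically pure case $0 \neq \theta \in H^{\overline{\cO}}_\xi$ is reduced to the holomorphic case in two steps. First, Proposition~\ref{prop-iso} identifies $H^{\overline{\cO}}_{\hat\xi}$ with $H^{\overline{\cO}}_{\hat\xi_0}$ on $X^{\hat\xi}$ for a suitable choice of $\hat\xi \in \{\xi, \bar\xi\}$ (namely one whose unitary part has nontrivial Deligne extension, exactly as in Step~1 of the proof of Proposition~\ref{prop-iso}). One may therefore assume that $\hat\xi$ ramifies along every component of $\cD$. Then Theorem~\ref{prop-tm} provides an injection $H^{\overline{\cO}}_{\hat\xi} \hookrightarrow \overline{H^{\cO}_{\hat\xi^{-1}}}$, which converts the nonzero anti-holomorphic class into a nonzero holomorphic class for $\hat\xi^{-1}$. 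Apply the holomorphic case to this class to obtain an orbifold pullback structure $(C_\varphi, f, \xi_C', \theta_C')$ for $\hat\xi^{-1}$, and then invert/conjugate on the orbifold side, using Proposition~\ref{prop-inverse-riemann} to match dimensions, to recover the required data for $\xi$ and~$\theta$.

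The main obstacle is the residual case of a non-torsion \emph{unitary} character $\xi$, where neither clause of Lemma~\ref{lema-char} applies and no strict unitary-holomorphic decomposition is available, so the Deligne-theoretic route through Proposition~\ref{prop-ara-beau} is not directly accessible. This is precisely the gap in Arapura's original argument that is fixed by invoking Proposition~\ref{prop-caso2}, which uses Levitt's interpretation~\cite{lev:94} of exceptional classes together with the main result of Simpson~\cite{Si2} on harmonic maps to produce an orbifold fibration à la Delzant. Aside from this, the routine bookkeeping is to verify that the orbifold morphism produced in each case is compatible with the unitary and holomorphic parts of the decomposition, so that the character on $\pi_1^{\text{\rm orb}}(C_\varphi)$ actually pulls back to $\xi$ and not merely to an integrally equivalent character.
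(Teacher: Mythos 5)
Your treatment of the two pure cases and of the residual case follows the paper's route (Proposition~\ref{prop-ara-beau} together with Lemma~\ref{lema-char} for holomorphically pure classes, Proposition~\ref{prop-iso} and Theorem~\ref{prop-tm} for anti-holomorphically pure ones, and Proposition~\ref{prop-caso2} for non-torsion unitary characters with $b_1(X^\xi)=b_1(\bar X)$). The genuine gap is in your opening reduction: you split $\theta=\theta^{\cO}+\theta^{\overline{\cO}}$, treat each summand separately, and then claim the two pullback structures ``can be combined by passing to a common refinement of the two orbifold targets''. No such refinement exists in general: if $f_1:X\to C_1$ and $f_2:X\to C_2$ are dominant morphisms onto curves and some $g:X\to C$ factors both of them, then the generic fibers of $g$ lie inside generic fibers of both $f_1$ and $f_2$, which forces $f_1$ and $f_2$ to be equivalent fibrations. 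So your combination step is exactly the assertion that both pure pieces come from the \emph{same} orbifold morphism, and that is the non-trivial point you have not proved; were the two morphisms genuinely different, the mixed class $\theta$ would not be a pullback under any single morphism and the theorem would fail for it.

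This is precisely the content of the paper's Lemma~\ref{lema-pure}. Its proof first turns the anti-holomorphic pullback into a holomorphically pure pullback of the \emph{same} morphism by deleting a point of the target curve and invoking Corollary~\ref{cor-hol-1} together with Proposition~\ref{prop-orbi-riemann}; then, assuming the two morphisms were inequivalent, the holomorphically pure pencil $t_1\theta_1+t_2\theta_2$ would yield, through the foliations $\ker(\theta_{(t_1,t_2)})_p=\ker(df_{[t_1:t_2]})_p$, a $\bp^1$-parametrized family of pairwise inequivalent orbifold morphisms onto hyperbolic orbifolds, contradicting the countability of equivalence classes of such pencils (\cite[Lemma~V.1.5]{ara:97}). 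You need this uniqueness argument (compare Lemma~\ref{lema-unico}) in place of the ``common refinement'' step; with it inserted, the rest of your outline coincides with the paper's proof.
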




\begin{lem}\label{lema-pure}
It is enough to prove Theorem{\rm~\ref{thm-pullback}} for holomorphically pure elements 
in~$H^1(X;\bc_\xi)$.
\end{lem}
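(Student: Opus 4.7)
The approach is to reduce the general case of Theorem~\ref{thm-pullback} to the holomorphically pure case using the Deligne decomposition $\theta = \theta_\cO + \theta_{\overline{\cO}}$ together with complex conjugation.

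If $\theta_{\overline{\cO}} = 0$ then $\theta = \theta_\cO$ is itself holomorphically pure and the conclusion is the hypothesis. The substantive step is the anti-holomorphically pure case $\theta_\cO = 0$. Here the plan is to pass to the conjugate character: complex conjugation on coefficients induces an anti-linear isomorphism $H^1(X;\bc_\xi) \to H^1(X;\bc_{\bar\xi})$, $\theta \mapsto \bar\theta$, and by Hodge symmetry it interchanges the two Deligne summands, so $\bar\theta$ is holomorphically pure in $H^1(X;\bc_{\bar\xi})$. Since $\xi$ non-torsion implies $\bar\xi$ non-torsion, and the conjugation isomorphism shows $\bar\xi \in \cV_1(X)$, the assumed holomorphically pure case of Theorem~\ref{thm-pullback} applied to $(\bar\xi,\bar\theta)$ yields an orbifold $C_\varphi$, an orbifold morphism $f\colon X\to C_\varphi$, a character $\eta \in \cV_1(C_\varphi)$, and a class $\mu \in H^1(C_\varphi;\bc_\eta)$ with $\bar\xi = f^*\eta$ and $\bar\theta = f^*\mu$. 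Conjugating both identities returns $\xi = f^*\bar\eta$ and $\theta = f^*\bar\mu$, as required.

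For the mixed case in which both $\theta_\cO$ and $\theta_{\overline{\cO}}$ are non-zero, my plan is to run both arguments in parallel: apply the holomorphically pure hypothesis to $\theta_\cO$ to obtain an orbifold morphism $f$ with $\xi = f^*\xi_C$ and $\theta_\cO = f^*\theta_{C,\cO}$, and apply the conjugation argument above to $\theta_{\overline{\cO}}$ to obtain a second orbifold morphism $f'$ with $\xi = (f')^*\xi_C'$ and $\theta_{\overline{\cO}} = (f')^*\theta_{C',\overline{\cO}}$. Since the pullback by an orbifold morphism respects the Deligne decomposition, it then suffices to identify the two factorizations $f$ and $f'$ of $\xi$. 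The main obstacle lies precisely here: establishing that the orbifold pencils extracted from the holomorphic and anti-holomorphic summands can be taken to coincide. This is a rigidity statement about the orbifold factorization of a non-torsion character, essentially the maximality property of Remark~\ref{rem-maximal} combined with uniqueness of the base through which a non-torsion $\xi$ factors. Once that identification is in place, setting $\theta_C := \theta_{C,\cO} + \theta_{C,\overline{\cO}}$ realizes $\theta = f^*\theta_C$ and completes the reduction.
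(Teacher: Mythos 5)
Your overall skeleton (reduce anti\--holomorphically pure classes to holomorphically pure ones by a conjugation\--type symmetry, then combine the two summands) does parallel the paper's strategy, but both substantive steps are gapped. The first gap is the appeal to ``Hodge symmetry''. For an open variety the splitting $H^1(X;\bc_\xi)=H^{\cO}_\xi\oplus H^{\overline{\cO}}_\xi$ is defined through logarithmic forms and the extension $\tilde{L}_\psi$ attached to a unitary-holomorphic decomposition (Remark~\ref{rem-nabla}); it is not the Hodge decomposition of a compact manifold, it is non-canonical in general (Remark~\ref{rem-mhs}), and conjugation does not formally interchange the summands. The symmetry actually available is Theorem~\ref{prop-tm}, which relates $H^{\overline{\cO}}_\xi$ to $\overline{H^{\cO}_{\xi^{-1}}}$ --- the \emph{inverse} character, which coincides with $\bar\xi$ only in the unitary case --- and which holds only when $\xi$ ramifies along every component of $\cD$, a hypothesis that cannot simply be arranged (Examples~\ref{ex-notram} and~\ref{ex-elliptic}); supplying the symmetry in the unramified directions is precisely the role of Proposition~\ref{prop-iso}, whose proof may trade $\xi$ for $\bar\xi$ or rescale the holomorphic part and works on $X^{\hat\xi}$. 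Your intermediate claim that $\bar\theta$ is holomorphically pure for $\bar\xi$ can in fact fail: on a once-punctured elliptic curve take $c\in H^1(\bar{X};\bz)$ nonzero, let $\omega\neq 0$ be the $(1,0)$-part of $2\sqrt{-1}\pi c$ and $\xi:=\exp(\omega)$; then $\xi$ is non-torsion, non-unitary, $\bar\xi=\xi$, and $H^{\cO}_\xi=0$ while $H^1(X;\bc_\xi)\cong\bc$ (Example~\ref{ex-elliptic}, Proposition~\ref{prop-hol-rs}), so conjugation carries the anti-holomorphically pure class to another anti-holomorphically pure class. The paper's reduction of this case therefore goes through Theorem~\ref{prop-tm} and Proposition~\ref{prop-iso} (inverse character, passage to $X^{\hat\xi}$ and back), not through naive conjugation.

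The second, larger gap is the mixed case: you correctly identify the crux --- that the pencil produced by $\theta_{\cO}$ and the one produced by $\theta_{\overline{\cO}}$ must coincide --- but you do not prove it, and the tools you cite do not supply it. Remark~\ref{rem-maximal} only asserts maximality of the orbifold structure induced by one fixed morphism, and ``uniqueness of the base through which a non-torsion character factors'' is Lemma~\ref{lema-unico}, which comes \emph{after} this lemma and is proved by exactly the argument you are missing, so invoking it here is circular. The paper argues instead by contradiction: if the two classes came from different pencils $f_1,f_2$, one deletes a point of $C_2$ and uses Corollary~\ref{cor-hol-1} with Proposition~\ref{prop-orbi-riemann} to replace the anti-holomorphic class by a nonzero holomorphically pure class $\theta_2$ pulled back by $f_2$; since $\theta_1$ and $\theta_2$ induce distinct foliations, applying the assumed pure case to the family $t_1\theta_1+t_2\theta_2$ of holomorphically pure classes yields a $\bp^1$-family of pairwise non-equivalent pencils onto hyperbolic orbifolds, contradicting the countability of such pencils \cite[Lemma~V.1.5]{ara:97}. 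Some form of this foliation/countability input (or an independent proof of Lemma~\ref{lema-unico}) is indispensable for your step ``identify $f$ and $f'$'' and is absent from your proposal; your auxiliary claim that pull-back by an orbifold morphism respects the Deligne splitting also needs justification in view of Remark~\ref{rem-mhs}, though that is not where the argument principally breaks.
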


\begin{proof}
First note that if Theorem{\rm~\ref{thm-pullback}} is true for holomorphically pure elements 
in $H^1(X;\bc_\xi)$, then it is also true for anti-holomorphically pure elements by Theorem~\ref{prop-tm} 
and Proposition~\ref{prop-iso}.

Let us assume that the claim in Theorem{\rm~\ref{thm-pullback}} holds for holomorphically pure elements 
but not in general. Then there exist non-zero $\theta_1,\alpha_2\in  H^1(X;\bc_\xi)$, where $\theta_1$ is
holomorphically pure, and $\alpha_2$ is anti-holomorphically pure, coming from different orbifold morphisms, 
say $f_1:X\to C_1$ and $f_2:X\to C_2$ respectively. 
We may replace $C_2$ by $C_2\setminus\{p\}$ ($p\in C_2$) and $X$ by $f_2^{-1}(C_2\setminus\{p\})$.
By Corollary~\ref{cor-hol-1} and Proposition~\ref{prop-orbi-riemann} there exists a holomorphically 
pure element $\theta_2\neq 0$ which is a pull-back by~$f_2$.

Since $\theta_1$ is a twisted logarithmic $1$-form, $p\mapsto\ker(\theta_1)_p=\ker (df_1)_p$ 
defines a foliation
$\cF_1$ which determines and is determined by $f_1$.
Analogously, we construct a foliation $\cF_2\neq\cF_1$. 
Choose a point $q\in X$ such that $(\cF_2)_q\neq(\cF_1)_q$.
Let $(t_1,t_2)\in\bc^2\setminus\{(0,0)\}$ and consider
the holomorphically pure element $\theta_{(t_1,t_2)}:=t_1\theta_1+t_2\theta_2$.
Since we claim that Theorem{\rm~\ref{thm-pullback}} holds for holomorphically pure elements
and two proportional elements come from the same orbifold morphism, for any $[t_1:t_2]\in\bp^1$
we obtain an orbifold morphism $f_{[t_1:t_2]}:X\to C_{[t_1:t_2]}$ such that
for generic $p\in X$ we have $\ker(\theta_{(t_1,t_2)})_p=\ker (df_{[t_1:t_2]})_p$.
Since these morphisms have distinct fibers we have obtained a
family of pairwise non-equivalent orbifold morphisms onto an hyperbolic orbifold parametrized
by $\bp^1$. The set of equivalence classes of such morphism is at most countable,
see~\cite[Lemma~V.1.5]{ara:97} and we obtain 
a contradiction. 

\end{proof}

The following key result is the orbifold version of~\cite[Proposition~V.1.4]{ara:97}, which in turn, 
is the quasi-projective version of~\cite[Proposition~2.1]{be}. We adapt Arapura's 
proofs~\cite[Propositions~V.1.3,~V.1.4]{ara:97} stressing the details required for the orbifold version.

\begin{prop}\label{prop-ara-beau}
Let $\xi=\psi\exp(\omega)\in \bt_G$ be a strict unitary-holomorphic decomposition of the character $\xi$
and let $\theta\in H^1(X;\bc_\xi)$ be a holomorphically pure element. 
Then, there exist:
\begin{enumerate}
\enet{\rm(\arabic{enumi})} 
\item an orbifold map $f:X\to C_\varphi$,
\item $\xi_C\in\bt_\Pi$, $\Pi:=\pi_1^{\text{\rm orb}}(C_\varphi)$, and
\item $\theta_C\in H^1(C_\varphi;\bc_{\xi_C})$
\end{enumerate}
such that $\xi=f^*(\xi_C)$ and $\theta=f^*(\theta_C)$.
\end{prop}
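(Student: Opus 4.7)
The plan is to follow the classical Castelnuovo--de Franchis / Catanese / Arapura route \cite[Proposition~V.1.3]{ara:97}, \cite[Proposition~2.1]{be}, with two modifications: first, use strictness of the decomposition (rather than non-unitarity of $\xi$) to secure the differential-geometric input in the twisted setting; second, encode the multiplicities of the fibers of the resulting fibration into the orbifold structure on the target curve. Concretely, I would represent $\theta$ as a concrete twisted logarithmic one-form, extract a Castelnuovo--de Franchis type wedge relation between it and $\omega$, use that relation to produce a foliation with algebraic leaves and hence a fibration $f:X\to C$, and finally transfer the data $(\xi,\theta)$ to $C_\varphi$ through $f_\ast$.

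\textbf{Step 1 (representing $\theta$).} Since $\xi=\psi\exp(\omega)$ is strict it is, in particular, integrally unramified; Lemma~\ref{lem-unram}\eqref{lem-unram2} then allows Theorem~\ref{prop-dlg}\ref{prop-dlg-pz} to be applied with the Deligne extension $\tilde L_\psi$ and the meromorphic connection $\nabla_\omega=\tilde\nabla+\omega\wedge\cdot\,$ of Remark~\ref{rem-nabla}. Thus $\theta$ is represented by a section $\tilde\theta\in H^0(\bar X;\Omega^1_{\bar X}(\log\cD)\otimes\tilde L_\psi)$ with $\nabla_\omega\tilde\theta=0$, i.e.\ $\tilde\nabla\tilde\theta+\omega\wedge\tilde\theta=0$, modulo the image of $H^0(\bar X;\tilde L_\psi)$, which vanishes unless $\psi=\one$ (Proposition~\ref{prop-neg}).

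\textbf{Step 2 (the wedge relation and the foliation).} The key geometric output, obtained by the $L^2$-harmonic analysis of Arapura \cite[Proposition~V.1.4]{ara:97} applied to $\tilde L_\psi$ and $\omega$ on the quasi-projective surface $X$, is the Castelnuovo--de Franchis-type identity $\omega\wedge\tilde\theta=0$. Strictness is what makes this work: $\omega$ is a genuinely non-integral logarithmic form, not absorbable into the unitary piece, so $\omega$ is non-zero and $\tilde\theta$ is forced to be proportional to it pointwise. This common kernel defines a holomorphic foliation $\cF$ on $X$ whose leaves are algebraic (by Bogomolov / Castelnuovo--de Franchis), and Stein factorization yields a surjective morphism $f:X\to C$ onto a smooth curve with connected generic fiber such that both $\omega$ and $\tilde\theta$ are constant along the fibers.

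\textbf{Step 3 (orbifold structure and descent).} Following Remark~\ref{rem-maximal}, endow $C$ with the orbifold structure $\varphi(p):=\gcd\{\text{multiplicities of components of }f^{\ast}(p)\}$, so that $f:X\to C_\varphi$ is a surjective orbifold morphism and, by Proposition~\ref{prop-orb}, $f_\ast:\pi_1(X)\to\pi_1^{\text{\rm orb}}(C_\varphi)=:\Pi$ is surjective. The multiplicities are designed precisely so that $\omega$ and $\tilde\theta$ descend to a logarithmic one-form $\omega_C$ and a twisted one-form $\tilde\theta_C$ on $C_\varphi$: residues along a component $D$ of $\cD$ with $f$-multiplicity $m$ translate into residues at $f(D)$ compatible with the cyclic stabilizer of order $m$. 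Moreover the unitary flat line bundle $\tilde L_\psi$, whose monodromy is trivial on the generic fiber of $f$ (since $\tilde\theta$ is a non-zero holomorphic section transverse to the leaves), descends by surjectivity of $f_\ast$ to a unitary character $\psi_C$ on $\Pi$ with $\psi=f^{\ast}\psi_C$.

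\textbf{Step 4 (conclusion).} Set $\xi_C:=\psi_C\exp(\omega_C)\in\bt_\Pi$, so that $f^\ast\xi_C=\xi$ by construction. The descended class $\tilde\theta_C$ defines, via Proposition~\ref{prop-orbi-riemann}, a non-zero element $\theta_C\in H^1(C_\varphi;\bc_{\xi_C})$, witnessing $\xi_C\in\cV_1(C_\varphi)$. Naturality of the Deligne identification \eqref{eq-hypercohomology} under pullback then gives $\theta=f^{\ast}\theta_C$. The step I expect to be the genuine obstacle is Step~2: upgrading the cocycle equation $\nabla_\omega\tilde\theta=0$ to the sharp pointwise identity $\omega\wedge\tilde\theta=0$ requires the $L^2$/harmonic input of \cite[V.1.4]{ara:97} together with the correct control of $\omega$ along $\cD-\cD^{\xi}$, which is exactly what strictness (Lemma~\ref{lema-char}) is designed to provide; the orbifold bookkeeping in Step~3 is then relatively mechanical, but must be executed carefully to match the residues of $\omega_C$ at orbifold points with the cyclic orders $\varphi(p)$.
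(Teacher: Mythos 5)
Your proposal follows essentially the same route as the paper's proof: represent $\theta$ by a logarithmic section of $\Omega^1_{\bar X}(\log\cD)\otimes\tilde L_\psi$ via the Deligne extension of the unitary part, extract the wedge relation with $\omega$, invoke \cite[Proposition~V.1.3]{ara:97} to get the pencil $f:X\to C$ with $\omega$ a pullback, equip $C$ with the maximal orbifold structure of Remark~\ref{rem-maximal}, descend the data, and conclude with Proposition~\ref{prop-orbi-riemann}. The differences are in where you place the work. First, the step you single out as the genuine obstacle (upgrading $\nabla_\omega\eta=0$ to $\omega\wedge\eta=0$ via $L^2$/harmonic theory) is actually the easy one here: since the extension is the Deligne extension of the \emph{unitary} character $\psi$, Timmerscheidt's degeneration (Theorem~\ref{prop-dlg}\ref{prop-dlg-unit}) gives $\bar\nabla=0$ on the relevant terms, so $\nabla_\omega\eta=0$ literally reads $\omega\wedge\eta=0$; the $L^2$ input of \cite{ara:97} is only needed for the anti-holomorphic side (Theorem~\ref{prop-tm}), which belongs to Lemma~\ref{lema-pure}, not to this proposition. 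Second, before invoking Castelnuovo--de Franchis/Stein factorization you must dispose of the degenerate case $\tilde L_\psi\cong\cO_{\bar X}$ with $\eta$ a scalar multiple of $\omega$ (then $\ker\omega$ alone gives no fibration); this is excluded because $\theta\neq0$ means $\eta$ is not in the image of $\nabla_\omega$, a point the paper states explicitly and your Step~2 leaves implicit. Third, the descent of $\psi$ is not ``by surjectivity of $f_*$'': the paper writes $\eta=\omega\otimes\beta$ for a meromorphic section $\beta$ of $\tilde L_\psi$, notes that $\beta_{|\bar F}$ is a nonzero holomorphic section on a generic fiber so that $(\tilde L_\psi)_{|\bar F}\cong\cO_{\bar F}$ by Proposition~\ref{prop-neg}, hence $\psi$ is trivial on $\pi_1(F)$, then uses the exact sequence $\pi_1(F)\to\pi_1(\check X)\to\pi_1(\check C)\to1$ (\cite{nr:83}) to descend to $\check C$, and finally checks the orbifold relations via $1=\psi(\mu_i)=\psi_{\check C}(\mu_p)^{n_i}$ and $\varphi(p)=\gcd(n_i)$; your phrase ``$\tilde\theta$ is a nonzero holomorphic section transverse to the leaves'' gestures at this degree argument but should be replaced by it, and the gcd check is what makes $\psi_C$ a character of $\Pi$ rather than merely of $\pi_1(\check C)$.
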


\begin{proof} 
Recall that the Deligne extension $\tilde{L}_\psi$ is an extension
for $L_\xi$ with a meromorphic connection $\nabla_\omega$. Using \eqref{spseq}
$\theta$ can be represented by a section
$\eta\in H^0(\bar{X};\Omega_{\bar{X}}^1(\log\cD)\otimes\tilde{L}_\psi)$
such that $\nabla_\omega(\eta)=0$ and it is not the image by $\nabla_\omega$ of a 
holomorphic section of $\tilde{L}_\psi$. 

According to Theorem~\ref{prop-dlg}, $\eta$ is so that $\eta\wedge\omega=0$. In addition, if 
$\tilde{L}_\psi=\cO_{\bar{X}}$, then $\eta$ is not a complex multiple of $\omega$. 

Using~\cite[Proposition~V.1.3]{ara:97}, a holomorphic mapping $f:X\to C$
onto a quasi-projective Riemann surface $C$ exists with the following properties:

\begin{enumerate}
\item 
The mapping is the restriction of $\bar{f}:\bar{X}\to\bar{C}$ (possibly after 
performing additional blowing-ups on $\bar{X}$), $\cD_1:=\bar{C}\setminus C$, 
\item
there is a logarithmic $1$-form $\omega_1\in H^0(\bar{C};\Omega_{\bar{C}}^1(\log\cD_1))$ 
such that $\omega=\bar{f}^*(\omega_C)$.
\end{enumerate}

At this point it is worth mentioning that~\cite[Proposition~V.1.3]{ara:97} also ensures
the existence of a character on $C$, whose pull-back by $f$ translated by a torsion element,
equals $\psi$. Alternatively, we will use a more detailed description of $\psi$ to describe it 
as the pull-back of a character on a certain orbifold structure on $C$.

Let $\varphi$ be the maximal orbifold structure on $C$ naturally induced by $f$ as described
in Remark~\ref{rem-maximal}, and let $\check{C}$ be the set 
of non-multiple points for $\varphi$. Let us write $\bar{C}\setminus\check{C}=\cD_1+\cD_2$,
$\check{X}:=f^{-1}(\check{C})$, and $\check{f}:=f_{|}:\check{X}\to\check{C}$.
Let $\check{\psi}$ denote the induced character by $\psi$ on~$\check X$. 


Let us consider $F$ a generic fiber of $\check{f}$ and consider its closure $\bar{F}$ 
in $\bar{X}$. Outside the multiple fibers there is an exact sequence (see~\cite[Lemma 1.5C]{nr:83} and~\cite{cko})
\[
\pi_1(F)\to\pi_1(\check{X})\to\pi_1(\check{C})\to 1. 
\]
In order to check that $\check{\psi}$ is the pullback of a character $\psi_{\check{C}}$ of 
$\pi_1(\check{C})$ it is enough to check that $\pi_1(F)\subset\ker\check{\psi}$. Arapura proceeds 
as follows: there is a meromorphic section $\beta$ of $\tilde{L}_\psi$ such that $\eta=\omega\otimes\beta$.
Since $F$ is generic, one may assume $\omega_{|\bar{F}}\neq 0$. Hence, $\beta_{|\bar{F}}$ 
is holomorphic. This fact has two consequences according to Proposition~\ref{prop-neg} applied to~$F$:
\begin{itemize}
\item $(\tilde{L}_\psi)_{|\bar{F}}=\cO_{\bar F}$ and hence $\psi$ and $\check{\psi}$ are 
trivial on $F$, thus $\check{\psi}$ is a pullback, say $\check \psi=\check f^*(\psi_{\check C})$.
It follows that $\check{\xi}:=\check{\psi}\exp(\omega)$ is also a pullback, say
$\check \xi=\check f^*(\xi_{\check C})$, where $\xi_{\check C}=\psi_{\check C}\exp(\omega_{\check C})$.
\item The meromorphic section~$\beta$ is holomorphic and constant on~$\bar{F}$, i.e.,
$\beta$ is a pullback by $\bar{f}$, say $\beta=\bar{f}^*(\beta_{\bar C})$. 
Hence  $\eta$ is the pullback by $\bar{f}$ of a logarithmic form, that is 
$\eta=\bar f^*(\omega_{\bar C}\otimes \beta_{\bar C})$ (with poles along $\cD_1+\cD_2$).
\item Moreover, the character $\psi_{\check C}$ induces a character $\psi_{C_\varphi}$ on $C_\varphi$.
In order to see this note that, if $f^*(p)=\sum n_iE_i$, $\mu_i$ is a meridian around $E_i$, and 
$\mu_p$ is a meridian around $p$, then
$1=\psi(\mu_i)=\check \psi(\mu_i)=\psi_{\check C}(\mu_p^{n_i})=\left(\psi_{\check C}(\mu_p)\right)^{n_i}$.
Therefore $\left(\psi_{\check C}(\mu_p)\right)^{\varphi(p)}=1$, since 
by construction $\varphi(p)=\gcd(n_i)$.
\end{itemize}

The existence of $\check C$, $\xi_{\check C}$, and $\theta_{\check C}$ proves the result for 
$\check{X}$ with respect to a Riemann surface. Finally, the discussion above and 
Proposition~\ref{prop-orbi-riemann} show the existence of $C_\varphi$, $\xi_{C}$, and $\theta_{C}$ 
which proves the result for $X$ with respect to orbifolds.
\end{proof}

The non-unitary case for Theorem~\ref{thm-pullback} is partially proved 
in~\cite[Proposition~V.1.4]{ara:97}. In fact, the argumentation line in the proof only
establishes the statement for characters that ramify over all components of the divisor 
$\cD=\bar{X}\setminus X$. 
More precisely, since Theorem~\ref{prop-tm} is needed, Arapura 
replaces the quasi-projective manifold~$X$ by the bigger manifold $X^\xi\subset \bar{X}$ but, 
as Examples~\ref{ex-notram} and~\ref{ex-elliptic} show, this must be done carefully. 
Proposition~\ref{prop-iso} solves these issues.


\begin{cor}\label{cor-caso1}
Theorem{\rm~\ref{thm-pullback}} is true when the character~$\xi$ is either non-unitary
or $b_1(X^\xi)>b_1(\bar{X})$.
\end{cor}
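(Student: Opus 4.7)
The plan is to combine the three main ingredients established earlier in the section and in \S\ref{sec-deligne}: Lemma~\ref{lema-pure}, Lemma~\ref{lema-char}, and Proposition~\ref{prop-ara-beau}. Each hypothesis in the corollary is precisely one of the two sufficient conditions identified in Lemma~\ref{lema-char} for the existence of a strict unitary-holomorphic decomposition of~$\xi$, so once reduction to the holomorphically pure case is carried out, Proposition~\ref{prop-ara-beau} applies directly.

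More concretely, first I would invoke Lemma~\ref{lema-pure} to reduce Theorem~\ref{thm-pullback} to the case where $0\neq\theta\in H^1(X;\bc_\xi)$ is holomorphically pure. Then, under either of the two hypotheses of the corollary, Lemma~\ref{lema-char} yields a strict unitary-holomorphic decomposition $\xi=\psi\exp(\omega)$ with $\psi$ unitary and $\omega\in H^0(\bar X;\Omega^1_{\bar X}(\log\cD))$. With such a decomposition in hand, one is exactly in the setting of Proposition~\ref{prop-ara-beau}, whose conclusion provides an orbifold $C_\varphi$, an orbifold morphism $f:X\to C_\varphi$, a character $\xi_C\in\bt_\Pi$ for $\Pi=\pi_1^{\text{\rm orb}}(C_\varphi)$, and a class $\theta_C\in H^1(C_\varphi;\bc_{\xi_C})$ with $\xi=f^*(\xi_C)$ and $\theta=f^*(\theta_C)$.

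The only point that requires verification is that $\xi_C\in\cV_1(C_\varphi)$, which follows tautologically since $\theta_C\neq 0$: if $\theta_C$ vanished then $\theta=f^*(\theta_C)$ would vanish as well, contradicting the choice of $\theta$. The non-triviality of $\xi_C$ in $\cV_1$ is therefore built into the construction.

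The main obstacle (already handled in the lead-up to this corollary) is not in the proof of the corollary itself, which is essentially an assembly statement, but rather in verifying that Proposition~\ref{prop-ara-beau} genuinely applies with an \emph{orbifold} target: this is where the passage from Arapura's character $\psi$ on $\check C$ to an honest character $\psi_{C_\varphi}$ on $C_\varphi$ uses the maximality of $\varphi$ and the divisibility of multiplicities along fibers. That piece of bookkeeping is already carried out in the proof of Proposition~\ref{prop-ara-beau}, so here I would simply cite it. The remaining non-unitary, non-integrally-ramified characters not covered by Lemma~\ref{lema-char} (equivalently, unitary characters with $b_1(X^\xi)=b_1(\bar X)$) are precisely the residual case, which will be handled separately in the sequel via the Levitt/Simpson approach alluded to in the introduction.
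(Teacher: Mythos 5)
Your proposal is correct and follows exactly the paper's own route: the published proof of this corollary is a one-line assembly of Proposition~\ref{prop-ara-beau} with Lemmas~\ref{lema-pure} and~\ref{lema-char}, which is precisely what you do, including the (correct) observation that $\theta_C\neq 0$ forces $\xi_C\in\cV_1(C_\varphi)$. No discrepancies to report.
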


\begin{proof}
It is a direct consequence of Proposition~\ref{prop-ara-beau} and Lemmas~\ref{lema-pure} and~\ref{lema-char}.
\end{proof}

\begin{rem}
This corollary is enough if we restrict our attention to the case when $b_1(\bar{X})=0$ as in~\cite{li:09}. 
\end{rem}

The remaining case, that is, non-torsion unitary characters~$\xi$ such that 
$b_1(X^\xi)=b_1(\bar{X})$, will be treated with different arguments, following Delzant's
technique in~\cite{delzant} for K\"ahler manifolds. This strategy uses results by 
Levitt~\cite{lev:94} and Simpson~\cite{Si1} which can be adapted to our case.

\begin{prop}\label{prop-caso2}
Let $\xi$ be a non-torsion unitary character such that $b_1(X^\xi)=b_1(\bar{X})$ and 
let $0\neq\theta\in H^1(X;\bc_\xi)$. Then, Theorem{\rm~\ref{thm-pullback}} holds.
\end{prop}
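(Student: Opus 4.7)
The plan is to adapt Delzant's strategy~\cite{delzant} for compact K\"ahler manifolds to our quasi-projective setting, using the hypothesis $b_1(X^\xi)=b_1(\bar{X})$ as the bridge to the compact world where Simpson's theorem~\cite{Si2} is available.

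First I reduce via Lemma~\ref{lema-pure} to the holomorphically pure case. Since $\xi$ is unitary and non-torsion, I write $\xi=\tau\exp(2\pi\sqrt{-1}\chi)$ with $\tau$ torsion and $\chi\in H^1(X;\br)$ non-integral. The hypothesis $b_1(X^\xi)=b_1(\bar{X})$ is exactly the negation of the condition in Lemma~\ref{lema-char}\ref{lema-char2}, and via Proposition~\ref{hd} it says that the $H^{11}_{\br}(X)$ piece relevant to $\cD\setminus\cD^\xi$ is trivial, so that up to adding an integer class, the real class $\chi$ lives on $\bar{X}$. This is precisely what enables the coming reduction to the compact case.

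Using a cocycle $\alpha\colon G\to\bc$ representing the holomorphically pure class $\theta$ (in the sense of~\S\ref{alg-const}), I form the affine isometric action $\rho_{\xi,\alpha}\colon G\to\mathrm{Isom}(\bc)$ sending $g\mapsto (z\mapsto \xi(g)z+\alpha(g))$; it is isometric because $\xi$ is unitary. The nontriviality of $\theta$ combined with the irrationality of $\chi$ forces $\rho_{\xi,\alpha}$ to be an \emph{exceptional class} in Levitt's sense~\cite{lev:94}. Applying Levitt's construction produces a nontrivial minimal isometric action of $G$ on an $\br$-tree $T$ with virtually abelian arc stabilizers, whose translation length function is governed by $\chi$ and $\alpha$. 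By the first paragraph's lifting of $\chi$ to $\bar{X}$, the relevant data descend (up to torsion adjustment of $\tau$) so that the $T$-action factors through $\pi_1(\bar{X})$.

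I then invoke Simpson's main result in~\cite{Si2}: a compact K\"ahler group with a nontrivial small isometric action on an $\br$-tree factors through the orbifold fundamental group of a hyperbolic orbifold curve, the factorization being realized by a holomorphic map. Applied to the action of $\pi_1(\bar{X})$ on $T$, this yields a holomorphic $\bar{f}\colon\bar{X}\to\bar{C}$. Equipping $\bar{C}$ with the maximal orbifold structure induced by $\bar{f}$ (Remark~\ref{rem-maximal}) and restricting to $X$ gives the orbifold morphism $f\colon X\to C_\varphi$. The characteristic variety computations of~\S\ref{sec-orb}, together with Proposition~\ref{prop-orbi-riemann}, then supply the character $\xi_C\in\cV_1(C_\varphi)$ and the class $\theta_C\in H^1(C_\varphi;\bc_{\xi_C})$ whose pullbacks are $\xi$ and $\theta$.

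The main obstacle is the transfer from $X$ to $\bar{X}$: Simpson's theorem in~\cite{Si2} is stated only for compact K\"ahler manifolds, whereas our natural setting is the open manifold $X$. The hypothesis $b_1(X^\xi)=b_1(\bar{X})$ is exactly what makes the transfer possible, since it guarantees that the real cohomology data used to build the $\br$-tree action already lives on $\bar{X}$. Without it, one would need the extension of~\cite{Si2} to quasi-projective manifolds, which the authors explicitly note is currently unavailable; this is why Proposition~\ref{prop-caso2} must be isolated from Corollary~\ref{cor-caso1} and handled by a different technique.
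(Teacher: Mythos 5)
There is a genuine gap at the heart of your argument: the passage from the unitary character to an exceptional (Bieri--Neumann--Strebel) class. The affine isometric action $g\mapsto(z\mapsto\xi(g)z+\alpha(g))$ is an action on the Euclidean line $\bc$, not on an $\br$-tree, and Levitt's results in~\cite{lev:94} do not convert it into one; moreover, mere irrationality of $\chi$ (i.e.\ $\xi$ being non-torsion) does not force any exceptional class to exist. The mechanism that actually works, and the one the paper uses following Delzant, is non-archimedean: one first reduces to the case where $\xi$ is \emph{algebraic and non-integral} --- if $\xi$ is transcendental it cannot be an isolated point of $\cV_k$ (which is defined over $\bq$), so its component contains non-unitary characters and one is back in Corollary~\ref{cor-caso1}; if $\xi$ takes algebraic-integer values one either concludes $\xi$ is torsion by Kronecker's theorem (excluded) or passes to a Galois-conjugate character that is non-integral. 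Only then does one get a valuation $\nu$ of the number field $\bk$ containing $\xi(G)$ and $\theta(G)$ with $\omega:=\nu\circ\xi\neq 0$; the action on $\bp^1(\bk)$ induces an action on the Bruhat--Tits tree $T_\nu$, whose Busemann cocycle is $\omega$, and this is what makes $\omega$ exceptional. Your proposal skips these reductions entirely, and without them there is no valuation and no tree.

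A second, related gap is the descent to $\bar{X}$. What descends is not the tree action but only the real class $\omega$: since the value group of $\nu$ is torsion-free, every $g$ with $\xi(g)$ of finite order lies in $\ker\omega$, and the hypothesis $b_1(X^\xi)=b_1(\bar{X})$ is used precisely to see that meridians of $\cD^\xi$ have torsion $\xi$-image, hence lie in $\ker\omega$, so $\omega$ extends to $\omega_{\bar{X}}\in H^1(\bar{X};\br)\setminus\{0\}$. One must then still check that $\omega_{\bar{X}}$ remains exceptional, which the paper does via Levitt's criterion (counting unbounded components of $F^{-1}(\br_{>0})$ for a primitive $F$ on an abelian cover, unaffected by removing the real-codimension-two set $\cD$), before invoking Simpson's theorem in~\cite{Si2} --- which concerns integral leaves of holomorphic one-forms and exceptional classes on compact K\"ahler manifolds, not isometric actions on $\br$-trees as you state --- to produce $\bar{f}:\bar{X}\to\bar{C}$. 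Your claim that ``the $T$-action factors through $\pi_1(\bar{X})$'' is both unjustified and unnecessary in the correct argument; finally, the identification of $\xi$ and $\theta$ as pullbacks again uses the tree action (triviality of the action of the fiber subgroup on $\bp^1(\bk)$), not merely the orbifold characteristic-variety computations of~\S\ref{sec-orb}.
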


\begin{proof}
Let us assume that $\xi$ is not algebraic. Since $\cV_k:=\cV_k(X)$ is an algebraic 
subvariety of $\bt_G$, $G=\pi_1(X)$, defined with rational coefficients, one can deduce 
that $\xi$ cannot be an isolated point of $\cV_k$. Hence, if $k=\dim H^1(X;\bc_\xi)$ and 
$V$ is the irreducible component of $\cV_k$ containing $\xi$, this component contains 
non-unitary elements and the result follows easily.

From now on, $\xi$ will be assumed to be algebraic. If $\xi(G)$ only contains algebraic 
integers, two cases may happen: either all the conjugates of these algebraic integers are 
also algebraic integers, or not. In the first case Kronecker Theorem implies that $\xi$ is 
torsion and this case has been excluded. In the second case, a conjugate character 
$\tilde{\xi}$ is non-integral. Since all the statements are algebraic, it is enough to 
prove the result for~$\tilde{\xi}$.

Then, we can assume $\xi$ is algebraic and non-integral and we can follow the proof 
of~\cite[Proposition~2]{delzant}. The first step is to construct an
\emph{exceptional} class $\omega\in H^1(X;\br)\setminus\{0\}$ in the sense
of Bieri-Neumann-Strebel~\cite{bns}; in fact, $\omega$ is defined 
with integer coefficients. We identify $H^1(X;\br)$ with
$\Hom(G,\br)$. 

We sketch the construction of~$\omega$, see~\cite{delzant} for more details.
Since $H^1(X;\bc_\xi)$ is generated by elements in a number field we may assume that $\theta$ 
is represented by a cocycle (see~\ref{alg-const}) in a number field (also denoted by~$\theta$). 
We fix such a number field~$\bk$ in order to have that $\forall g\in G$ the matrix 
$\left(\begin{smallmatrix}
\xi(g)&\theta(g)\\
0&1
\end{smallmatrix}\right)$
has coefficients in~$\bk$. This defines an action of $G$ on $\bk^2$ and $\bp^1(\bk)$. 
The fact that $\xi$ is not integral guarantees the existence of a valuation~$\nu$ such 
that $\omega:=\nu\circ\xi$ is not trivial. This element is the Busemann cocycle of an 
exceptional action of $G$ on the Bruhat-Tits tree $T_\nu$ (see~\cite{bt:72}). As a 
consequence, the class~$\omega$ is exceptional. 

Since $\omega:=\nu\circ\xi$, 
we deduce that $\ker\xi\subset\ker\omega$. As a consequence, $\omega$ is the restriction of 
$0\neq\omega_\xi\in H^1(X^\xi;\br)$. We can strengthen this argument;
since the target of $\nu$ is an ordered group, without torsion. Thus for any $g\in G$ whose image
$\xi(g)$ is a torsion element, one has that $g\in\ker\omega$. Hence, let 
$\mu$ be the meridian of an irreducible component
of $\cD^\xi$. Since $b_1(X^\xi)=b_1(\bar{X})$, $\xi(\mu)$ is a torsion element and,
applying last comment, $\mu\in\ker\omega$. Hence, there exists 
$\omega_{\bar{X}}\in H^1(\bar{X};\br)\setminus\{0\}$ such that 
$\omega_\xi$ is the restriction of $\omega_{\bar{X}}$ (in the same way,
$\omega$ is the restriction of $\omega_{\bar{X}}$). We represent
$\omega_{\bar{X}}$ by a closed differential $1$-form (and $\omega$ by its restriction to $X$).
For the sake of simplicity we keep the notation of $\omega_{\bar{X}}$ and $\omega$
for the differential $1$-forms.

Let us consider an unramified abelian 
covering $\bar{\pi}:\bar{X}_\omega\to\bar{X}$ such that $\bar{\pi}^*\omega_{\bar{X}}$
is an exact $1$-form (e.g, the one determined by $\ker\omega_{\bar{X}}$ as a character of
$\pi_1(\bar{X})$); let $\bar{F}:X_\omega\to\br$ a primitive of $\bar{\pi}^*\omega_{\bar{X}}$. 
Let $X_\omega:=\bar{\pi}^{-1}(X)$, since $\pi_1(X)\to\pi_1(\bar{X})$ is surjective
the manifold  $X_\omega$ is connected. Let us denote $\pi:X_\omega\to X$ the induced covering
and $F:X_\omega\to\br$ the restriction of $\bar{F}$; $F$ is a primitive of $\pi^*\omega$.

Following~\cite{lev:94}, we have a characterization of exceptional elements of $H^1(X;\br)$ 
when they are represented by~$1$-forms:  $\omega$ is exceptional if and only 
if $F^{-1}(\br_{>0})$ has more than one connected component where $F$ is unbounded. Since 
$\bar{X}\setminus X$ is a union of real codimension $2$ varieties
the number of connected components of $F^{-1}(\br_{>0})$ and $\bar{F}^{-1}(\br_{>0})$ is the same.
We conclude that $\omega_{\bar{X}}$ is also exceptional.

Using Simpson's results~\cite{Si2}, one obtains a surjective morphism 
$\bar{f}:\bar{X}\to\bar{C}$ with connected fibers, where $\bar{C}$ is a compact Riemann surface. 
Let $C:=\bar{f}(\bar{C})$ and let $f:=\bar{f}_{|}:X\to C$ be the restriction morphism.
Here $C$ is considered with the maximal orbifold structure $\varphi$ defined by~$f$ (see Remark~\ref{rem-maximal}).
We follow again the arguments of the proof of~\cite[Proposition~2]{delzant}. For this orbifold
$C_\varphi$, the character $\xi$ is in the pull-back of the characteristic variety $\Sigma(C_\varphi)$. 
This is proved using the action on the tree and showing that the action on the projective line~$\bp^1(\bk)$ 
is trivial. As a consequence $\theta$ is in the pull-back of $\Sigma(C_\varphi)$.
\end{proof}

\begin{proof}[Proof of Theorem{\rm~\ref{thm-pullback}}]
Let $\xi\in \bt_G$ be a character on $X$. If $\xi$ is not unitary, then Corollary~\ref{cor-caso1} gives
the result. Hence we might assume that $\xi$ is a unitary character.

On the other hand, if $b_1(X^\xi)>b_1(\bar X)$, then again Corollary~\ref{cor-caso1} gives the result.
Finally, if $b_1(X^\xi)=b_1(\bar X)$, then Proposition~\ref{prop-caso2} shows the statement.
\end{proof}

\begin{rem}
\label{rem-4.1}
Note that the proof of Theorem{\rm~\ref{thm-pullback}} in fact shows a sharper result, namely, it is
also true for unitary characters (torsion or non-torsion) such that $b_1(X^\xi)>b_1(\bar X)$. In other
words, the only case missing is unitary torsion characters for which $b_1(X^\xi)=b_1(\bar X)$. In fact,
the statement is not true in general for this case as illustrated in Example~\ref{ex-aarhus}.
\end{rem}

\begin{proof}[Proof of Theorem{\rm~\ref{thmprin}}]
Once Theorem~\ref{thm-pullback} is proved, the arguments of~\cite[Section~V]{ara:97} 
give the result (they formally apply to $\cV_k$ for any~$k$).
\end{proof}

\section{Behavior of torsion characters and further applications}\label{sec-torsion}

The results of \S~\ref{sec-char} do not apply to general torsion characters
of a quasi-projective manifold~$X$. Some of them may appear as isolated points of 
some characteristic variety $\cV_k(X)$ and in that case they may fall either in 
case~\ref{thmprin-orb} or in case~\ref{thmprin-tors} of Theorem~\ref{thmprin}. Since 
the irreducible components of Theorem~\ref{thmprin}\ref{thmprin-orb} are torsion-translated 
subtori of $\bt_G$, $G:=\pi_1(X)$, even if Theorem~\ref{thm-pullback} does not apply
to torsion characters, the properties of \emph{close} non-torsion points imply
that some of the elements of $H^1(X;\bc_\xi)$, for $\xi$ torsion, do come from an orbifold map.
In this section we study the behavior of torsion characters as well as show some properties of 
characteristic varieties which can be derived from Theorem~\ref{thmprin}. 

\begin{prop}
Let $\xi\in\bt_G$ be a torsion character such that $b_1(X^\xi)>b_1(\bar{X})$.
Then $\xi$ cannot be an isolated component of any characteristic variety of~$X$.
\end{prop}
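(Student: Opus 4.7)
Since $\xi$ is a torsion character it is unitary, and the hypothesis $b_1(X^\xi)>b_1(\bar X)$ places $\xi$ in the regime where Theorem~\ref{thm-pullback} applies by Remark~\ref{rem-4.1}. Note first that $\xi\neq\one$: otherwise $\cD^\xi=\emptyset$ and $X^\xi=\bar X$, contradicting the hypothesis. Suppose for contradiction that $\xi$ is an isolated irreducible component of some $\cV_k(X)$. Pick any nonzero $\theta\in H^1(X;\bc_\xi)$; Theorem~\ref{thm-pullback} produces an orbifold morphism $f\colon X\to C_\varphi$ (which we may assume has connected generic fiber so that Proposition~\ref{prop-orb} applies), a character $\xi_C\in\cV_1(C_\varphi)$, and $\theta_C\in H^1(C_\varphi;\bc_{\xi_C})$ with $\xi=f^*\xi_C$ and $\theta=f^*\theta_C$. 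The surjectivity of $f_*$ gives the injectivity of both $f^*\colon\bt_\Pi\hookrightarrow\bt_G$ (as an algebraic map onto a torsion-translated subtorus) and of the inflation $f^*\colon H^1(C_\varphi;\bc_{\xi_C'})\hookrightarrow H^1(X;\bc_{f^*\xi_C'})$ for every $\xi_C'\in\bt_\Pi$.

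The heart of the argument is to show that the connected component $\bt_\Pi^{\xi_C}$ of $\bt_\Pi$ containing $\xi_C$ is positive-dimensional. From Propositions~\ref{charvar-libre}--\ref{charvar-compacto}, $\bt_\Pi$ has only zero-dimensional components precisely when $C_\varphi$ is of the form $\bp^1_{\bar m}$ (compact projective line carrying orbifold weights) or $\bc_{\bar m}$ (affine line with orbifold weights), the two cases in which $\pi_1(C)$ is trivial. To rule these out, I would use Proposition~\ref{prop-orbi-riemann} to \emph{open up} $C_\varphi$ at the orbifold points $p_j$ where $\xi_C(\mu_j)\neq 1$, identifying the twisted cohomology of $(C_\varphi,\xi_C)$ with that of the Riemann surface $Y:=C\setminus\{p_j:\xi_C(\mu_j)\neq 1\}$ equipped with the induced character. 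The hypothesis $b_1(X^\xi)>b_1(\bar X)$ yields (by the long exact sequence of the pair $(\bar X,\cD^\xi)$) free cycles in $H_1(X^\xi)$ coming from meridians of components of $\cD^\xi$; these meridians map via $f_*$ to powers of meridians of the removed points in $Y$, and one checks that at least one such image has infinite order in $H_1(Y)$, forcing $Y$ to be non-compact with non-trivial free abelianization. Therefore $\bt_{\pi_1(Y)}$---which coincides with $\bt_\Pi^{\xi_C}$ up to the opening-up identification---has positive dimension.

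Once $\bt_\Pi^{\xi_C}$ is positive-dimensional, the length $\ell$ is constant on it, so by Propositions~\ref{charvar-libre}--\ref{charvar-compacto} the dimension $\dim H^1(C_\varphi;\bc_{\xi_C'})$ is constant along this component, equal to some $j\geq 1$. By the injective inflation $f^*$, one has $f^*(\bt_\Pi^{\xi_C})\subseteq\cV_j(X)$, a positive-dimensional subtorus of $\bt_G$ through $\xi$. Applying Theorem~\ref{thmprin} to the irreducible component of $\cV_k(X)$ through $\xi$: it cannot be of type~\ref{thmprin-tors} (since $\xi$ is realized as $f^*\xi_C$), hence it is of type~\ref{thmprin-orb}, namely a pullback $f'^*(W)$ with $W$ a component of the characteristic variety of an orbicurve; by the same positive-dimensionality argument applied to $f'$, this pullback is positive-dimensional, contradicting the isolation of $\xi$.

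The main obstacle is the middle paragraph: translating the purely Hodge-theoretic inequality $b_1(X^\xi)>b_1(\bar X)$ into the non-finiteness of $\bt_\Pi$ on the orbifold side. This is where the specific hypothesis is essential, and it requires the careful meridian analysis via Proposition~\ref{prop-orbi-riemann} together with the long exact sequence computing $b_1(X^\xi)-b_1(\bar X)$ as the dimension of the kernel $H^0(\cD^\xi)\to H^2(\bar X)$.
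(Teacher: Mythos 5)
Your first step is fine: by Remark~\ref{rem-4.1} (i.e.\ Corollary~\ref{cor-caso1} together with Lemma~\ref{lema-char}\ref{lema-char2}), Theorem~\ref{thm-pullback} does apply to a torsion $\xi$ with $b_1(X^\xi)>b_1(\bar X)$. But the two steps you build on it both have genuine gaps. The crucial middle step is wrong as stated: Proposition~\ref{prop-orbi-riemann} identifies \emph{twisted cohomology groups}, not character tori, and $\bt_{\pi_1(Y)}$ does not coincide with $\bt_\Pi^{\xi_C}$. The component $\bt_\Pi^{\xi_C}$ is a torsion translate of $\bt_\Pi^\one\cong(\bc^*)^{\rk H_1(\Pi)}$ (Remark~\ref{rem-ses-torus}, Proposition~\ref{prop-tg}), hence it is a single point whenever the underlying curve is $\bp^1$ or $\bc$, no matter how many orbifold points carry nontrivial $\xi_C(\mu_j)$: for $C_\varphi=\bc_{2,3}$ the torus $\bt_\Pi$ is finite, although $Y=\bc\setminus\{p_1,p_2\}$ has character torus $(\bc^*)^2$. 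So what you must prove is $\rk H_1(\Pi)>0$, i.e.\ positive genus or positive first Betti number of the underlying open curve; nontrivial free abelianization of $\pi_1(Y)$ is not enough, and the key assertion of your sketch (``at least one such image has infinite order in $H_1(Y)$'') is both unproved and aimed at the wrong group. In this regime the fact that actually forces $b_1(C)>0$ is analytic, not purely topological: the holomorphic part $\omega$ of a strict unitary-holomorphic decomposition is the pullback of a \emph{nonzero} logarithmic form on $\bar C$ (Proposition~\ref{prop-ara-beau}), which your meridian count does not see.

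The final step also fails: ``the component through $\xi$ cannot be of type~\ref{thmprin-tors} since $\xi=f^*(\xi_C)$'' misreads Theorem~\ref{thmprin}. Type~\ref{thmprin-orb} requires $V=f'^*(W)$ with $W$ a component of $\cV_k$ of the orbifold for the \emph{same}~$k$; a character can be the pullback of an element of $\cV_1(C_\varphi)$ and still be an isolated point of type~\ref{thmprin-tors} in $\cV_k(X)$ for larger $k$, exactly as happens for $\xi_6\in\Sigma_2$ in Example~\ref{ex-c6-9k}. Moreover, type~\ref{thmprin-orb} components may themselves be isolated torsion points (pullbacks of zero-dimensional components $\bt_\Pi^\lambda$), so invoking Theorem~\ref{thmprin} buys nothing without once more proving $\rk H_1(\Pi)>0$. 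Finally, even with both issues repaired, your construction only yields a positive-dimensional family inside $\cV_j(X)$ with $j=\dim H^1(C_\varphi;\bc_{\xi_C})$; for a torsion character different classes $\theta\in H^1(X;\bc_\xi)$ may come from different pencils (Lemma~\ref{lema-unico} needs $\xi$ non-torsion), so $j$ can be smaller than $k$, and a family in $\cV_j$ does not contradict isolation in $\cV_k$. The paper's proof avoids all of this and is much shorter: Lemma~\ref{lema-char}\ref{lema-char2} gives a strict decomposition $\xi=\psi\exp(\omega)$ with $\omega$ nontrivial, and the one-parameter family $\xi_t=\psi\exp(t\omega)$ stays in the \emph{same} $\cV_k(X)$ (Remark~\ref{rem-nabla} and the Arapura--Beauville deformation argument), so $\xi$ cannot be isolated.
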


\begin{proof}
It is a direct consequence of Lemma~\ref{lema-char} and Corollary~\ref{cor-caso1}:
there is a strict unitary-holomorphic decomposition $\xi=\psi\exp(\omega)$.
Let us assume that $\xi\in\cV_k(X)$.
Following Remark~\ref{rem-nabla} and the ideas in~\cite{be} and in~\cite[Proposition~V.1.3]{ara:97},
for any $t\in\bc$, we have $\xi_t:=\xi=\psi\exp(t\omega)\in\cV_k(X)$, and then
$\xi$ cannot be an isolated point of~$\cV_k(X)$.
\end{proof}

Next we give a generalization of the results about essential coordinate components 
of the characteristic varieties of the complement of hypersurfaces in $\bp^n$, 
see~\cite{acc:pcps,li:01}. Let $X$ be a quasi-projective manifold and let $V$ be an irreducible 
component of $\cV_k(X)$. Let $X^V$ be the maximal subvariety $X\subset X^V\subset\bar{X}$
such that any character in $V$ is defined over $X^V$. 
Recall that $H^1(X^V;\bc^*)\subset H^1(X;\bc^*)$ and that
$V\subset H^1(X^V;\bc^*)$. The following result
is a generalization of a Libgober's result in~\cite[Lemma~1.4.3]{li:01}.

\begin{prop}
If $V$ is not contained in $\cV_k(X^V)$ then $V$ is a torsion
point of type \ref{thmprin-tors} in Theorem{\rm~\ref{thmprin}}.
\end{prop}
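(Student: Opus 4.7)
The plan is to argue by contradiction combined with Theorem~\ref{thmprin}. Suppose $V$ is of type~\ref{thmprin-orb}; I will show that this already forces $V\subset\cV_k(X^V)$, contradicting the hypothesis. Theorem~\ref{thmprin} then leaves only case~\ref{thmprin-tors}, i.e.\ $V$ must be an isolated torsion point not of type~\ref{thmprin-orb}.

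Assume there exist an orbifold $C_\varphi$, an orbifold morphism $f:X\to C_\varphi$ with connected generic fiber, and an irreducible component $W$ of $\cV_k(\pi_1^{\text{orb}}(C_\varphi))$ such that $V=f^*(W)$. Write $E:=X^V\setminus X\subset\cD$, so $\bar X\setminus X^V=\cD\setminus E$; by the definition of $X^V$, every character of $V$ is trivial on the meridians of components of $E$. The first task is to extend $f$ to an orbifold morphism $f^V:X^V\to C_\varphi$. Geometrically $f^V$ is just the restriction of $\bar f:\bar X\to\bar C$ to $X^V$; at the level of fundamental groups what must be verified is that $f_*:\pi_1(X)\to\pi_1^{\text{orb}}(C_\varphi)$ factors through the surjection $\pi_1(X)\twoheadrightarrow\pi_1(X^V)$, i.e.\ that every meridian $\mu_D$ with $D$ a component of $E$ is killed by~$f_*$. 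If $\bar f(D)=\{p\}$ is a point in $\bar C$, then $f_*(\mu_D)=\mu_p^{m_D}$ where $m_D$ is the multiplicity of $D$ in $\bar f^*(p)$; since $f$ is an orbifold morphism one has $\varphi(p)\mid m_D$, and $\mu_p^{\varphi(p)}=1$ in $\pi_1^{\text{orb}}(C_\varphi)$. If instead $\bar f|_D$ dominates $\bar C$, then a generic point of $D$ maps to a regular value in $C$, and a small transverse loop around such a point is sent to a nullhomotopic loop based at an ordinary (non-orbifold) point of $C_\varphi$.

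Once $f^V_*:\pi_1(X^V)\to\pi_1^{\text{orb}}(C_\varphi)$ is in place, it remains surjective since the generic fiber is the same as that of $f$, so by Proposition~\ref{prop-orb} the pullback $(f^V)^*:H^1(C_\varphi;\bc_{\xi_C})\hookrightarrow H^1(X^V;\bc_\xi)$ is injective for every $\xi_C\in W$ with $\xi:=f^*(\xi_C)\in V$. Therefore
\[
\dim H^1(X^V;\bc_\xi)\;\ge\;\dim H^1(C_\varphi;\bc_{\xi_C})\;\ge\;k,
\]
so $\xi\in\cV_k(X^V)$; running over all $\xi\in V$ gives $V\subset\cV_k(X^V)$, contradicting the hypothesis. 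Hence $V$ cannot be of type~\ref{thmprin-orb}, and Theorem~\ref{thmprin} places $V$ in case~\ref{thmprin-tors}.

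The step I expect to require the most care is the meridian analysis, particularly the horizontal case where $\bar f|_D$ dominates $\bar C$: one must ensure that the orbifold structure $\varphi$ on $C$ is defined only by vertical multiple fibers, so that a transverse meridian to such $D$ lands in the smooth locus of $C_\varphi$ and vanishes in the orbifold fundamental group. A clean packaging is to bypass the horizontal case entirely by observing that, since every $\xi\in V$ factors through both $\pi_1(X^V)$ (by definition of $X^V$) and $\pi_1^{\text{orb}}(C_\varphi)$ (because $V=f^*(W)$), the characters of $W$ that sweep out $V$ already annihilate the relevant meridians, so the factorization is forced by $V$ itself without additional geometric input.
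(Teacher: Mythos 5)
Your overall strategy (contrapositive via Theorem~\ref{thmprin}, extend the orbifold map over the added boundary components, then pull back cohomology) is the same as the paper's, but the step on which everything rests --- that $f_*\colon\pi_1(X)\to\pi_1^{\text{\rm orb}}(C_\varphi)$ factors through $\pi_1(X^V)$ \emph{with the same target orbifold} --- has a genuine gap. Your vertical-case analysis only makes sense when $\bar f(D)=p$ lies in $C$: for $p\in\bar C\setminus C$ the multiplicity $\varphi(p)$ is not defined and the meridian $\nu_p$ of the puncture typically has infinite order in $\pi_1^{\text{\rm orb}}(C_\varphi)$, so $f_*(\mu_D)=\nu_p^{m_D}\neq 1$; yet such a $D$ can perfectly well lie in $E$, because characters only see the abelianization and $\nu_p$ can be torsion in $H_1^{\text{\rm orb}}$ (this happens whenever $C$ has a single puncture). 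A concrete instance: $X=C\times\bc$ with $C$ a once-punctured curve of genus $g\ge 2$, $f$ the first projection, $D=\{q\}\times\bp^1$; then $f_*(\mu_D)=\nu_q=\prod_i[a_i,b_i]\neq 1$ in $\pi_1(C)$, although every character of every component $V$ is trivial on $\mu_D$, so $D\subset E$ but your factorization fails. For the same reason your closing ``clean packaging'' is a non sequitur: triviality of all characters of $V$ on $\mu_D$ does not force $f_*(\mu_D)=1$. Finally, even over $p\in C$ the divisibility $\varphi(p)\mid m_D$ is unjustified: the orbifold-morphism condition constrains only the components of $f^*(p)$ contained in $X$, not the pieces of $\bar f^*(p)$ lying in $\cD$.

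The paper's route (following Libgober's Lemma~1.4.3 in \cite{li:01}) is to extend the map to $X^V$ allowing the target to change: one takes the curve obtained from $C$ by filling in the points of $\bar C\setminus C$ over which components of $E$ lie, equipped with the maximal orbifold structure induced by the extended map, and uses the definition of $X^V$ to check that the characters of $W$ descend (the orders of their values on the relevant meridians divide the new multiplicities, since they already kill the meridians $\mu_D$ for $D\subset E$). The remaining, genuinely delicate, point is that the descended component still has depth $k$ on the new orbifold --- this is where filling in punctures on which the characters are trivial can lower $\dim H^1$ and where the hypotheses are really used. Your argument bypasses this entirely by asserting the (generally false) factorization with unchanged target, so the chain of inequalities $\dim H^1(X^V;\bc_\xi)\ge\dim H^1(C_\varphi;\bc_{\xi_C})\ge k$ is not available as written.
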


\begin{proof}
If $V$ is not isolated then it comes from an orbifold map
$X\to C_\varphi$. Following the ideas in~\cite[Lemma~1.4.3]{li:01}
we can extend this map to $X^V$ using the definition of this variety.
\end{proof}

We recall a definition introduced in~\cite{ACM-artin,DPS4}.

\begin{dfn}
For $V$ an irreducible component of $\cV_k(G)$ such that $\dim_\bc V\geq 1$, consider
$\Sh V$ (not necessarily in $\cV_k(G)$) parallel to $V$ ($\Sh V=\rho V$ for some 
$\rho\in \bt_G$) and such that $\one \in \Sh V$. Such a subtorus $\Sh V$ will be 
referred to as the \emph{shadow} of~$V$.
\end{dfn}

The result and the proof of Theorem~\ref{thmprin} provide many obstructions 
for the quasi-projectivity of a group. We start with a very useful result
(see~Lemma~\ref{lema-pure}).

\begin{lem}\label{lema-unico}
Let $\xi$ be a non-torsion character such that $H^1(X;\bc_\xi)\neq 0$. Then there is a 
unique \emph{maximal} orbifold mapping $f:X\to C_\varphi$ such that $\xi=f^*\xi_C$ 
and $H^1(X;\bc_\xi)=f^*H^1(C_\varphi;\bc_{\xi_C})$.
\end{lem}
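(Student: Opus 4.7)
The plan is to combine Theorem~\ref{thm-pullback}, applied class-by-class, with the countability of equivalence classes of orbifold maps onto hyperbolic orbifolds recorded in~\cite[Lemma~V.1.5]{ara:97}. Existence is immediate: by Theorem~\ref{thm-pullback}, every nonzero $\theta\in H^1(X;\bc_\xi)$ can be written as $\theta=f_\theta^{*}(\theta_C)$ for some orbifold map $f_\theta:X\to C_{\varphi,\theta}$ with $\xi=f_\theta^{*}(\xi_C)$, and after Stein factorization one may assume $f_\theta$ is surjective with connected generic fibers, so $\varphi$ is maximal by Remark~\ref{rem-maximal}. The whole content of the lemma is therefore that a \emph{single} $f$ works for all of $H^1(X;\bc_\xi)$ simultaneously.

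First I would reduce to the holomorphically pure case. By Theorem~\ref{prop-tm} and Proposition~\ref{prop-iso}, any anti-holomorphically pure class in $H^1(X;\bc_\xi)$ corresponds, after passing to $X^\xi$ and conjugating, to a holomorphically pure class for a (still non-torsion) twin character, and this correspondence is compatible with pullback by orbifold morphisms. It therefore suffices to prove that a unique maximal $f$ captures the holomorphically pure part of $H^1(X;\bc_\xi)$ and, independently, that of $H^1(X;\bc_{\hat\xi})$ for $\hat\xi\in\{\xi^{-1},\bar\xi\}$; the same map will then handle the anti-holomorphic part of $H^1(X;\bc_\xi)$.

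For the core step, I would imitate the argument from the proof of Lemma~\ref{lema-pure}. Suppose $\theta_1,\theta_2\in H^1(X;\bc_\xi)$ are holomorphically pure classes arising from orbifold maps $f_1,f_2$. Each $\theta_i$ is represented by a twisted logarithmic $1$-form whose kernel at a generic point defines a foliation $\cF_i=\ker(df_i)$ that determines and is determined by $f_i$. If $\cF_1\neq\cF_2$, then at a point $q\in X$ where the two foliations differ, the family $\theta_{(t_1,t_2)}:=t_1\theta_1+t_2\theta_2$, indexed by $[t_1:t_2]\in\bp^1$, remains holomorphically pure and produces, via Theorem~\ref{thm-pullback}, a $\bp^1$-family of pairwise non-equivalent surjective orbifold maps of $X$ onto hyperbolic orbicurves, contradicting~\cite[Lemma~V.1.5]{ara:97}. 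Hence $\cF_1=\cF_2$, the maps $f_1$ and $f_2$ agree up to an isomorphism of the target orbicurve, and a single maximal $f$ works for every holomorphically pure class.

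The main obstacle is checking that the anti-holomorphically pure part of $H^1(X;\bc_\xi)$ is really captured by the \emph{same} $f$ that appears on the holomorphic side: one has to verify that the identifications of Proposition~\ref{prop-iso} and Theorem~\ref{prop-tm} are natural with respect to orbifold pullback, so that the foliation determined by a class on the anti-holomorphic side coincides with the one determined by its image on the holomorphic side of $\hat\xi$. Once this naturality is in place, the countability argument forces a single $f$ on both sides, and the resulting maximal orbifold map $f:X\to C_\varphi$ satisfies $f^{*}H^1(C_\varphi;\bc_{\xi_C})=H^1(X;\bc_\xi)$, proving the lemma.
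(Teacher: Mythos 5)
Your core mechanism is the paper's: reduce to holomorphically pure classes and rule out two non-equivalent maps by the foliation argument plus the countability of equivalence classes of maps onto hyperbolic orbicurves from~\cite[Lemma~V.1.5]{ara:97}. The genuine gap is exactly the step you yourself flag and then leave open: how the anti-holomorphically pure part of $H^1(X;\bc_\xi)$ gets captured by the \emph{same} $f$. Your plan -- prove uniqueness of a maximal map for the holomorphic side of $\xi$ and, \emph{independently}, for the holomorphic side of $\xi^{-1}$ or $\bar\xi$, and then transport via Theorem~\ref{prop-tm} and Proposition~\ref{prop-iso} -- does not close by itself: even granting both uniqueness statements, the map obtained for the twin character could a priori differ from the one obtained for $\xi$, and the identifications in Theorem~\ref{prop-tm} and Proposition~\ref{prop-iso} change both the space ($X$ versus $X^\xi$) and the character, so their asserted ``compatibility with pullback by orbifold morphisms'' is precisely what would have to be proved, and you do not prove it.

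The paper sidesteps this entirely, with the same trick already used in the proof of Lemma~\ref{lema-pure}: after exchanging $\xi$ with $\bar\xi$ so that $H^{\cO}_\xi\neq 0$, given two candidate maps $f_1,f_2$ one removes points from the target curves $C_j$ (equivalently, replaces $X$ by a Zariski-dense open subset); by Corollary~\ref{cor-hol-1} and Proposition~\ref{prop-orbi-riemann} each $f_j$ then pulls back a nonzero \emph{holomorphically} pure class for the \emph{same} (restricted) character $\xi$, and the foliation/countability argument compares $\ker df_1$ and $\ker df_2$ directly. Since passing to a dense Zariski-open subset changes neither the maps up to equivalence nor their foliations, the conclusion descends to $X$, and no naturality of the conjugation isomorphisms is ever needed. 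To repair your write-up, replace your two-character reduction by this puncturing step (which you quote from Lemma~\ref{lema-pure} for the foliation but not for the anti-holomorphic class); the rest of your argument, including the existence and maximality remarks via Theorem~\ref{thm-pullback}, Stein factorization and Remark~\ref{rem-maximal}, is fine.
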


\begin{proof}
Exchanging $\xi$ by $\bar{\xi}$ if necessary we may assume that $H_\xi^{\cO}\neq 0$.
Let us assume that $f_j:X\to C_{j,\varphi}$, $j=1,2$, satisfies $\xi=f^*\xi_{C_j}$ and
$f_j^*H^1(C_{j,\varphi};\bc_{\xi_{C_j}})\subset H^1(X;\bc_\xi)$.

One can remove points from $C_j$ to ensure $H_{\bar{\xi}_j}^{\cO}\neq 0$ (replacing $X$ 
by a dense Zariski-open set). The foliation argument used in Lemma~\ref{lema-pure} gives 
a contradiction.
\end{proof}

\begin{prop}\label{prop-obs1}
Let $G$ be a quasi-projective group and $V_1,V_2$ different irreducible components of 
$\cV_k(G)$, $k\geq 1$ of positive dimension. Then
\begin{enumerate}
\enet{\rm(\arabic{enumi})}
\item\label{prop-obs1-1} If the intersection $V_1\cap V_2$ is non-empty, then it consists 
of isolated torsion points. 
\item\label{prop-obs1-2} 
Their shadows are either equal or have $\one$ as an isolated intersection point.
\item\label{prop-obs1-3} 
If $V_1$ is not a component of $\cV_{k+1}(G)$ and $p\in V_1\cap\cV_{k+1}(G)$, then 
$p$~is a torsion point.
\item\label{prop-obs1-4} $V_1$ is an irreducible component of $\cV_\ell(G)$, $1\leq\ell\leq k$. 
\end{enumerate}
\end{prop}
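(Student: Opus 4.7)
The plan is to exploit Theorem~\ref{thmprin} to realize each positive-dimensional component as a pull-back $V_i = f_i^*(W_i)$ from an orbifold morphism $f_i\colon X\to C_{i,\varphi_i}$, with $W_i$ an irreducible component of $\cV_k(\Pi_i)$ where $\Pi_i:=\pi_1^{\text{\rm orb}}(C_{i,\varphi_i})$. By Propositions~\ref{charvar-libre} and~\ref{charvar-compacto} each such $W_i$ is either $\{\one\}$ (excluded since $V_i$ is positive-dimensional) or a full connected component $\bt_{\Pi_i}^{\lambda_i}$ of the character torus; thus each $V_i$ is a torsion-translated subtorus of $\bt_G$ with shadow $\Sh V_i = f_i^*(\bt_{\Pi_i}^{\one})$. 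All four parts will then follow by combining this pull-back description with the uniqueness statement of Lemma~\ref{lema-unico}.

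For~\ref{prop-obs1-1} I would argue by contradiction. If $\xi\in V_1\cap V_2$ is non-torsion, then $H^1(X;\bc_\xi)\neq 0$ and Lemma~\ref{lema-unico} produces a unique maximal orbifold morphism $f$ whose pull-back realises the full twisted cohomology at $\xi$. Since both $f_1$ and $f_2$ (being maximal by construction in Theorem~\ref{thmprin}) qualify, $f_1=f_2=:f$; using Proposition~\ref{prop-orb} to get injectivity of $f^*$ on characters, the same $\xi_C$ produces $\xi$, and both $W_1,W_2$ are connected components of $\bt_\Pi$ through $\xi_C$. Hence $W_1=W_2$ and $V_1=V_2$, a contradiction. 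Thus every point of $V_1\cap V_2$ is torsion, and since every positive-dimensional closed subvariety of $\bt_G$ contains non-torsion points (torsion being countable), $V_1\cap V_2$ is zero-dimensional, i.e.\ a finite set of isolated torsion points.

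Parts~\ref{prop-obs1-4} and~\ref{prop-obs1-3} will follow from a common uniqueness principle. For~\ref{prop-obs1-4}, $V_1\subset\cV_\ell(G)$ since $\cV_\ell(G)\supset\cV_k(G)$; if $V_1\subsetneq V'$ for an irreducible component $V'$ of $\cV_\ell(G)$, then $V'$ is positive-dimensional, Theorem~\ref{thmprin} gives $V'=g^*(W')$, and choosing a non-torsion $\xi\in V_1\subset V'$ and applying Lemma~\ref{lema-unico} forces $g=f_1$; hence $V'=f_1^*(W')$ with $W'\supset W_1$, and since $W_1$ is a connected component of $\bt_{\Pi_1}$ it is maximal among irreducible subvarieties, so $W'=W_1$ and $V'=V_1$, a contradiction. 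For~\ref{prop-obs1-3}, let $d$ be the generic value of $\dim H^1(X;\bc_\xi)$ on non-torsion $\xi\in V_1$; Lemma~\ref{lema-unico} identifies it with $\dim H^1(\Pi_1;\bc_{\xi_{C_1}})$, which is constant on $W_1$ by Propositions~\ref{charvar-libre}/\ref{charvar-compacto}. Clearly $d\geq k$; and if $d\geq k+1$ then $V_1\subset\cV_{k+1}(G)$, so the same argument as in~\ref{prop-obs1-4} makes $V_1$ a component of $\cV_{k+1}(G)$, contradicting the hypothesis. Therefore $d=k$, no non-torsion point of $V_1$ lies in $\cV_{k+1}(G)$, and any $p\in V_1\cap\cV_{k+1}(G)$ must be torsion.

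The main obstacle is~\ref{prop-obs1-2}. Writing $V_i=\tau_i\Sh V_i$ and computing in the abelian group $\bt_G$, a short verification gives $V_1\cap V_2=\eta_0(\Sh V_1\cap\Sh V_2)$ for any $\eta_0\in V_1\cap V_2$; hence, whenever $V_1\cap V_2\neq\emptyset$, part~\ref{prop-obs1-1} already forces $\Sh V_1\cap\Sh V_2$ to be zero-dimensional, with $\one$ isolated. The genuinely delicate subcase is $V_1\cap V_2=\emptyset$, and here I propose to adapt the foliation strategy of Lemma~\ref{lema-pure}. A non-torsion $\sigma\in\Sh V_1\cap\Sh V_2$ produces holomorphically pure classes at $\tau_1\sigma\in V_1$ and $\tau_2\sigma\in V_2$, and by Lemma~\ref{lema-unico} these correspond to the foliations $\cF_1,\cF_2$ cut out by $f_1,f_2$ respectively. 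If $\cF_1\neq\cF_2$, a $\bp^1$-family of linear combinations of the two twisted logarithmic $1$-forms, applied through Lemma~\ref{lema-unico} at nearby non-torsion characters, would yield uncountably many pairwise non-equivalent orbifold morphisms onto hyperbolic orbifolds, contradicting the countability of such morphisms in~\cite[Lemma~V.1.5]{ara:97}. Hence $\cF_1=\cF_2$, the two orbifold morphisms are equivalent, and $\Sh V_1=\Sh V_2$. The technical heart of the argument is making this $\bp^1$-parameter deformation rigorous across distinct base characters $\tau_1\sigma\neq\tau_2\sigma$, rather than at a common character as in the proof of Lemma~\ref{lema-pure}.
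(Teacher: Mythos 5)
Your arguments for parts~\ref{prop-obs1-1}, \ref{prop-obs1-3} and~\ref{prop-obs1-4} are sound and are essentially the paper's own (very terse) proof: every positive-dimensional component is $f^*(\bt_{\Pi}^{\lambda})$ for an orbifold map by Theorem~\ref{thmprin} together with Propositions~\ref{charvar-libre} and~\ref{charvar-compacto}, and the uniqueness of the maximal orbifold map at a non-torsion character (Lemma~\ref{lema-unico}) forces two components meeting at a non-torsion point, or a larger component of $\cV_\ell(G)$ containing $V_1$, to come from one and the same orbifold map, hence to be parallel translates of the same subtorus; this gives \ref{prop-obs1-1} and~\ref{prop-obs1-4}, and your constancy-of-dimension argument for~\ref{prop-obs1-3} is a fine way to make "immediate from \ref{prop-obs1-1} and Theorem~\ref{thmprin}" precise. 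Your reduction of~\ref{prop-obs1-2} to~\ref{prop-obs1-1} when $V_1\cap V_2\neq\emptyset$ is also correct.

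The gap is exactly where you flag it: part~\ref{prop-obs1-2} when $V_1\cap V_2=\emptyset$, which is the generic and substantive case. The proposed $\bp^1$-family $t_1\theta_1+t_2\theta_2$ does not exist: $\theta_1$ and $\theta_2$ are cocycles twisted by the distinct characters $\tau_1\sigma$ and $\tau_2\sigma$, so no linear combination of them is a class in any single twisted cohomology group, and the mechanism of Lemma~\ref{lema-pure} -- which works only because the whole pencil lives inside one $H^1(X;\bc_\xi)$ with $\xi$ non-torsion, so that each member is itself a pull-back -- cannot be run across two base characters; comparing the foliations of $f_1$ and $f_2$ gives nothing without such a pencil. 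The paper's route is different: when the orbifolds giving $V_1,V_2$ have underlying curve neither $\bc^*$ nor an elliptic curve, the trivial component $\bt_{\Pi_j}^{\one}$ lies in $\cV_1(\Pi_j)$ by Propositions~\ref{charvar-libre} and~\ref{charvar-compacto}, so each shadow $\Sh V_j=f_j^*(\bt_{\Pi_j}^{\one})$ is itself an irreducible component of $\cV_1(G)$ (by the same maximality argument you use in~\ref{prop-obs1-4}); applying part~\ref{prop-obs1-1} to these two components of $\cV_1(G)$, which both contain $\one$, yields precisely the dichotomy of~\ref{prop-obs1-2}. For the exceptional bases ($\bc^*$ or an elliptic curve, where the shadow need not lie in any characteristic variety) the paper does not argue directly but appeals to Dimca's result. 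To complete your proof you should either reproduce this two-case argument (supplying the reference or a proof for the exceptional bases) or find a genuinely different argument for disjoint components; the cross-character pencil as written is not one.
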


\begin{proof}
The property~\ref{prop-obs1-4} follows from Lemma~\ref{lema-unico} since it is true for 
orbifolds. Each irreducible component comes from an orbifold; if two different components 
come from the same orbifold, they are parallel, hence disjoint. Lemma~\ref{lema-unico} 
gives~\ref{prop-obs1-1}.

If the underlying Riemann surface of the orbifold giving $V_j$ is not $\bc^*$ or an 
elliptic curve, then their shadows are in $\cV_1$, and \ref{prop-obs1-2} follows 
from~\ref{prop-obs1-1}. For the general case, it is proved by Dimca. Finally~\ref{prop-obs1-3} 
is an immediate consequence of \ref{prop-obs1-1} and Theorem~\ref{thmprin}.
\end{proof}

\begin{rem}
Parts~\ref{prop-obs1-1} and~\ref{prop-obs1-2} in Proposition~\ref{prop-obs1} can also 
be found in~\cite{DPS4}, Part~\ref{prop-obs1-3} is proved in~\cite{Di3}.
\end{rem}

\begin{prop}\label{prop-obs2}
Let $G$ be a quasi-projective group and let $V$ be an irreducible component of $\cV_k(G)$, 
$k\geq 1$ of positive dimension~$d$. Then:
\begin{enumerate}
\enet{\rm(\arabic{enumi})}
\item \label{prop-obs2-1} 
If $\one\in V $, then $k\leq d-1$. Moreover, one can ensure that $V$ is a component of 
$\cV_{d-e}(G)$, where $e=2$ if $d$ even and $e=1$ if $d$ odd.
\item\label{prop-obs2-2} 
If $\one\notin V $, then $ V $ is a component of~$\cV_d(G)$. 
\item\label{prop-obs2-3} 
If $\one\notin V $ and $d>2$, then its shadow is an irreducible component of~$\cV_1(G)$.
\item\label{prop-obs2-5} 
If $\one\notin V $ and $d=2$, then its shadow
is an irreducible component of $\cV_1(G)$ if and only if its shadow is an irreducible component 
of~$\cV_2(G)$.
\item\label{prop-obs2-6} 
If $\one\notin V $ and $d=1$, then its shadow is not an irreducible component of~$\cV_1(G)$.
\end{enumerate}
\end{prop}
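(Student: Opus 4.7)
The approach for all five parts rests on a uniform application of Theorem~\ref{thmprin}. For any positive-dimensional irreducible component $V$ of $\cV_k(G)$, one writes $V=f^*W$ where $f:X\to C_\varphi$ is an orbifold morphism, $\Pi:=\pi_1^{\mathrm{orb}}(C_\varphi)$, and $W=\bt_\Pi^\lambda$ is a positive-dimensional irreducible component of $\cV_k(\Pi)$. Depending on whether the underlying Riemann surface $C$ is non-compact or compact, $\Pi$ is $\bff^r_{\bar m}$ with $d=r$, or $\GG^g_{\bar m}$ with $d=2g$, and the shadow takes the form $\Sh V=f^*\bt_\Pi^\one$. Throughout I rely on the explicit tables in Propositions~\ref{charvar-libre} and~\ref{charvar-compacto}, together with Lemma~\ref{lema-unico} for the uniqueness of the maximal orbifold.

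Parts (1) and (2) are direct table look-ups. In (1), the condition $\one\in V$ forces $\lambda=\one$, and the tables list $\bt_\Pi^\one$ as an irreducible component of $\cV_k(\Pi)$ exactly for $k\leq r-1$ (non-compact) or $k\leq 2g-2$ (compact), yielding $k\leq d-1$ uniformly. For the moreover statement, $d$ odd excludes the compact case (since $2g$ is even), so $V$ is a component of $\cV_{d-1}$; $d$ even permits either possibility, so only $\cV_{d-2}$ is universally guaranteed. In (2), $\lambda\neq\one$ and substituting $k=d$ amounts to checking $\ell(\lambda)\geq 1$ (automatic for $\lambda\neq\one$) or $\ell(\lambda)\geq 2$ (automatic in the compact case, where length~$1$ is forbidden by Proposition~\ref{prop-tg}).

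For parts (3) and (5) I plan to apply Theorem~\ref{thmprin} to $\Sh V$ itself: if $\Sh V$ is a positive-dimensional irreducible component of $\cV_j(G)$, it must equal $g^*\bt_{\Pi'}^\one$ for some orbifold map $g$, with $\bt_{\Pi'}^\one$ a dim-$d$ irreducible component of $\cV_j(\Pi')$. Matching dimensions against the tables restricts the possible orbifold types. In (3), $d>2$ forces non-compact $r'\geq 3$ or compact $g'\geq 2$; both give $\bt_{\Pi'}^\one$ as a component of $\cV_1(\Pi')$, and the orbifold coming from $V$ itself realizes such a $g$, so $\Sh V$ is a component of $\cV_1(G)$. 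In (5), $d=1$ forces non-compact $r'=1$, but then $\bt_{\Pi'}^\one$ is not a component of $\cV_1(\Pi')$, so no admissible $g$ exists and $\Sh V$ is not a component.

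The delicate case is (4). Here $d=2$ forces either non-compact $r'=2$ (where $\bt_{\Pi'}^\one$ is a component of $\cV_1(\Pi')$ but not of $\cV_2(\Pi')$) or compact $g'=1$ (where $\bt_{\Pi'}^\one$ is a component of neither). The direction ``$\Sh V$ a component of $\cV_2(G)$ implies $\Sh V$ a component of $\cV_1(G)$'' is automatic from Proposition~\ref{prop-obs1}\ref{prop-obs1-4}. The reverse implication is the main obstacle: assuming $\Sh V$ is a component of $\cV_1(G)$, the asymmetric non-compact $r'=2$ case must be ruled out by showing that the presence of $V$ with $\lambda\neq\one$ and $d=2$ forces $\Sh V$ either to sit inside a strictly larger orbifold pullback or to simultaneously lie in $\cV_2(G)$. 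This requires invoking Lemma~\ref{lema-unico} to control the foliations associated to the competing orbifold maps, and constitutes the hardest ingredient in the proof.
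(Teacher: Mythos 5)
Your overall strategy---Theorem~\ref{thmprin} combined with the orbifold tables of Propositions~\ref{charvar-libre} and~\ref{charvar-compacto}, plus Lemma~\ref{lema-unico}---is exactly the route the paper intends (its proof is literally ``direct consequence of Theorem~\ref{thmprin} and the properties of the characteristic varieties of orbifolds''), and your treatment of parts \ref{prop-obs2-1}, \ref{prop-obs2-2}, \ref{prop-obs2-3} and \ref{prop-obs2-6} is essentially right, modulo one recurring point you state but do not argue: the table look-ups only give \emph{containment} of $V$ (or of $\Sh V$) in $\cV_j(G)$ for the new index $j$, while the propositions assert it is a \emph{component}. Since Proposition~\ref{prop-obs1}\ref{prop-obs1-4} only lets you descend in the index, the maximality (e.g.\ that $f^*(\bt_\Pi^\one)\subseteq\cV_1(G)$ in part \ref{prop-obs2-3} is not strictly contained in a bigger component) has to be extracted from the uniqueness/foliation argument of Lemma~\ref{lema-unico} at a generic non-torsion point; you cite the lemma in the preamble but never run this step, e.g.\ in \ref{prop-obs2-3} you jump from ``the orbifold coming from $V$ realizes such a $g$'' to ``so $\Sh V$ is a component of $\cV_1(G)$''.

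The genuine gap is part \ref{prop-obs2-5} (your ``case (4)''), which you explicitly leave open. As you yourself note, if $\Sh V$ were a component of $\cV_2(G)$ it would have to be $g^*\bt_{\Pi'}^\one$ with $\bt_{\Pi'}^\one$ a component of $\cV_2(\Pi')$, which by the tables forces $\dim\bt_{\Pi'}\geq 3$; so for a two-dimensional shadow the right-hand side of the equivalence can only hold vacuously, and the whole content of \ref{prop-obs2-5} is to decide when $\Sh V$ can or cannot be a component of $\cV_1(G)$, i.e.\ to handle the non-compact $b_1(C')=2$ realization of the shadow against the compact elliptic one. Your proposal stops precisely there: ``the asymmetric non-compact $r'=2$ case must be ruled out \dots this requires invoking Lemma~\ref{lema-unico} \dots and constitutes the hardest ingredient.'' That is a statement of what needs to be proved, not a proof; no mechanism is given for why the existence of the translated component $V$ (with $\lambda\neq\one$, $d=2$) constrains the shadow in the asserted way, and the foliation argument you invoke does not apply directly because the pencil giving $V$ and a putative pencil giving $\Sh V$ contribute cohomology at disjoint sets of characters (a torsion-translated component versus the identity component), so Lemma~\ref{lema-unico} cannot be applied at a common non-torsion character without further work. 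Until this implication is actually argued, part \ref{prop-obs2-5} is unproved in your write-up.
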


\begin{proof}
It is a direct consequence of Theorem~\ref{thmprin} and the properties of the characteristic 
varieties of orbifolds.
\end{proof}

\begin{rem}
The results \ref{prop-obs2-5} and~\ref{prop-obs2-6} in Proposition~\ref{prop-obs2} can 
be found in~\cite{Di3}. The cases where the shadow is not in the characteristic variety
correspond, according to Theorem~\ref{thmprin}, to either orbifold pencils over $\bc^*$ 
or elliptic pencils.
\end{rem}

\begin{prop}
\label{prop-suma}
Let $G$ be a quasi-projective group, and let $V_1$ and $V_2$ be two distinct irreducible components 
of $\cV_k(G)$, resp. $\cV_\ell(G)$. If $\xi\in V_1\cap V_2$, then this torsion point satisfies $\xi\in\cV_{k+\ell}(G)$.
\end{prop}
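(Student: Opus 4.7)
The plan is to apply Theorem~\ref{thmprin}\ref{thmprin-orb} to both $V_1$ and $V_2$ (assumed positive-dimensional; the case where one of them is an isolated torsion point is either trivial or reduces similarly). This yields orbifold morphisms $f_i\colon X\to C_{i,\varphi_i}$ and irreducible components $W_i\subset \cV_k(\pi_1^{\text{\rm orb}}(C_{i,\varphi_i}))$ resp.\ $\cV_\ell$, with $V_i=f_i^*(W_i)$. Writing $\xi=f_i^*(\xi_{C_i})$ for $\xi_{C_i}\in W_i$, we have $\dim H^1(C_{i,\varphi_i};\bc_{\xi_{C_i}})\ge k$ resp.\ $\ell$. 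Because $f_i$ has connected generic fibers, Proposition~\ref{prop-orb} gives a surjection of orbifold fundamental groups, whence an injective pullback $f_i^*\colon H^1(C_{i,\varphi_i};\bc_{\xi_{C_i}})\hookrightarrow H^1(X;\bc_\xi)$. Denote its image by $A_i$. The statement then reduces to the linear-algebra claim $\dim(A_1+A_2)\ge k+\ell$, equivalently $A_1\cap A_2=0$.

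Before attacking that intersection, I would verify that $f_1$ and $f_2$ are genuinely distinct orbifold morphisms (up to equivalence) whenever $V_1\ne V_2$. This follows from the explicit description of characteristic varieties of orbicurves in Propositions~\ref{charvar-libre} and~\ref{charvar-compacto}: the connected components of $\bt_{\pi_1^{\text{\rm orb}}(C_\varphi)}$ are pairwise disjoint, so once the target orbicurve and the character $\xi_{C_i}$ are fixed, the component $W_i$ containing $\xi_{C_i}$ is uniquely determined, which would force $V_1=V_2$ if the $f_i$ were equivalent. In particular, the generic fibers of $f_1$ and $f_2$, and therefore the foliations $\ker df_1,\ker df_2$ on $X$, differ generically.

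To establish $A_1\cap A_2=0$ I would invoke the Deligne decomposition $H^1(X;\bc_\xi)=H^{\cO}_\xi\oplus H^{\overline{\cO}}_\xi$ of~\eqref{eq-hypercohomology}, which respects each $A_i$ since pullback preserves the holomorphic type. For the holomorphic part, a pure class $\theta\in A_i\cap H^{\cO}_\xi$ is represented, following the proof of Proposition~\ref{prop-ara-beau}, by a twisted logarithmic $1$-form $\eta_i=f_i^*(\eta'_i)$ whose kernel at a generic point of $X$ agrees with $\ker df_i$. A nonzero element of $A_1\cap A_2\cap H^{\cO}_\xi$ would therefore force $\ker df_1=\ker df_2$ generically, contradicting the distinctness of the two foliations. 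The antiholomorphic part is handled symmetrically by applying Theorem~\ref{prop-tm} and Proposition~\ref{prop-iso} to transfer the argument to the conjugate character $\bar{\xi}$, which is again torsion of the same order. This yields $A_1\cap A_2=0$ and hence $\xi\in\cV_{k+\ell}(G)$.

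The main obstacle is that Proposition~\ref{prop-ara-beau} and Lemma~\ref{lema-unico}, on which the foliation argument rests, are formulated for characters admitting a strict unitary-holomorphic decomposition, a hypothesis a torsion $\xi$ need not satisfy. The remedy is a deformation argument: since each $V_i$ is positive-dimensional and irreducible, the non-torsion (in fact non-unitary) locus of $V_i$ accumulates at $\xi$. At each such nearby $\xi^{(n)}_i\in V_i$, Lemma~\ref{lema-unico} identifies $f_i$ as the unique maximal orbifold morphism giving rise to the pull-back, and the foliations $\ker df_i$ on $X$ are independent of the character. By upper semicontinuity of $\dim H^1(X;\bc_{(\cdot)})$, the bound $\dim(A_1+A_2)\ge k+\ell$ obtained at the nearby non-torsion points passes to the torsion point $\xi$ in the limit.
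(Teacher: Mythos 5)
Your first three paragraphs are essentially the paper's own argument: the paper likewise takes $H_j\subset H^1(X;\bc_\xi)$ to be the pull-back of $H^1(C_{j,\varphi_j};\bc_{\xi_{C_j}})$ under the orbifold maps furnished by Theorem~\ref{thmprin}, notes $\dim H_j\geq k$ (resp.\ $\ell$), and proves $H_1\cap H_2=\{0\}$ by the foliation/uniqueness argument of Lemma~\ref{lema-unico} (i.e.\ of Lemma~\ref{lema-pure}). Up to that point your reasoning is sound, including the reduction to pure classes via the Deligne decomposition (legitimate because a torsion character is unitary, so by Remark~\ref{rem-mhs} the decomposition is natural and compatible with pull-backs) and the treatment of the anti-holomorphic part through the conjugate character.

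The difficulty is your last paragraph. First, the ``obstacle'' you identify is not really there: you never need Proposition~\ref{prop-ara-beau} or Theorem~\ref{thm-pullback} at $\xi$ itself, because you are not producing the orbifold maps from cohomology classes --- $f_1,f_2$ are already given by Theorem~\ref{thmprin} for the whole components $V_1,V_2$, torsion point included, and every class in $A_i$ is a pull-back by construction; a holomorphically pure pull-back class is represented by $f_i^*$ of a twisted logarithmic form on the orbicurve (Theorem~\ref{prop-dlg} together with Proposition~\ref{prop-orbi-riemann}), hence cuts out the foliation $\ker df_i$, for any character, strict decomposition or not. Second, and more seriously, the proposed ``remedy'' fails. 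By Proposition~\ref{prop-obs1}\ref{prop-obs1-1} the intersection $V_1\cap V_2$ consists of isolated points, so the non-torsion characters accumulating at $\xi$ lie in only one of the two components; at such a character $\xi'$ there is no analogue of the second subspace, and generically $\dim H^1(X;\bc_{\xi'})$ equals $k$ (or $\ell$), not $k+\ell$. Semicontinuity of $\dim H^1(X;\bc_{(\cdot)})$ at $\xi$ therefore only yields $\dim H^1(X;\bc_\xi)\geq\max(k,\ell)$; moreover $A_1$ and $A_2$ live only inside $H^1(X;\bc_\xi)$, so there is no family of spaces ``$A_1+A_2$'' over nearby characters whose dimension bound could ``pass to the limit''. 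Deleting that paragraph and keeping the direct argument (with the observation that torsion implies unitary, hence naturality of the decomposition) recovers exactly the paper's proof.
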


\begin{proof}
Let $H_j\subset H^1(X;\bc_\xi)$ be the subspace obtained by the pull-back of the orbifold
giving $V_j$, $j=1,2$, $\dim H_1=k$, $\dim H_2=\ell$.
Using the arguments of the proof of Lemma~\ref{lema-unico} we prove $H_1\cap H_2=\{0\}$.
\end{proof}

\begin{rem}
\label{rem-k+l}
A careful look at the proof of Proposition~\ref{prop-suma} shows stronger consequences.
In particular, the hypothesis that the irreducible components $V_1$ and $V_2$ be \emph{distinct}
can be weakened as follows: $V_1$ and $V_2$ are \emph{$\xi$-distinct} as long as the spaces 
$H_j\subset H^1(X;\bc_\xi)$ obtained in the proof are different. Analogously, using
the arguments of the proof of Lemma~\ref{lema-unico} one obtains that $H_1\cap H_2=\{0\}$
and hence~$\xi\in\cV_{k+\ell}(G)$.

This subtle improvement is illustrated in Example~\ref{ex-c6-9k}.
\end{rem}

\section{Examples}
\label{sec-exam}

\begin{example}
Consider the curve $C$ with equation
$$
x y z\left(x^2+y^2+z^2-2 x y -2 x z-2 y z\right)=0.
$$
The fundamental group~$G$ of $X:=\bp^2\setminus C$ is the Artin group
of the triangle with weights~$(2,4,4)$, see~\cite{ACM-artin}. The first characteristic
variety of~$X$ consists of three irreducible components of dimension~$1$,
intersecting at one character~$\xi$ of order~$2$. The second characteristic variety
equals~$\{\xi\}$ and the third one is empty. This illustrates Proposition~\ref{prop-suma}.
\end{example}

\begin{example}
\label{ex-c6-9k}
Let us consider a curve $C\subset\bp^2$ which is the dual curve of a smooth cubic and
let $X:=\bp^2\setminus C$. Note that $C$ is a curve of degree~$6$ with nine ordinary cusps
and $\bt_G$ is the set of sixth roots of unity, where $G:=\pi_1(X)$.
The fundamental group of this curve $G$ was computed by Zariski~\cite{zr:37}.
It is not difficult to prove that $\cV_k(X)$, $k=1,2,3$, equals 
the set~$\{\xi_6,\bar \xi_6\}$ of points of order~$6$, 
$\xi_6:=\exp\left(\frac{\sqrt{-1}\pi}{3}\right)$.
Concretely, let us define a curve~$C$ of equation:
$$
f(x,y,z):=x^6+y^6+z^6-2(x^3 y^3+ x^3 z^3+ y^3 z^3)=0.
$$
Since 

\begin{equation}
\label{eq-23-1}
f(x,y,z)=(x^3+y^3-z^3)^2-4 (x y)^3,
\end{equation}
the map $[x:y:z]\mapsto [(x^3+y^3-z^3)^2:(x y)^3]$ induces an orbifold rational map 
$\bp^2\dasharrow \bp^1_{2,3}$, for which $C\mapsto [4:1]$. Therefore it also determines an 
orbifold map $X\to\bc_{2,3}$ which produces (by pull-back) a non-zero element 
$\theta_z\in H^\cO_{\xi_6}\subset H^1(X;\bc_{\xi_6})$. From the symmetry of the equation one can
obtain two other elements $\theta_y$ and $\theta_x$. Even though $\theta_x$, $\theta_y$, and 
$\theta_z$ satisfy the statement of Theorem~\ref{thm-pullback}, one can easily construct 
1-forms not satisfying it by simply considering a generic linear combination of $\theta_z$ 
and $\theta_y$. 

Also note that one can find other toric decompositions, like the one defined by
\begin{equation}
\label{eq-23-2}
\begin{split}
f(x,y,z)&=4\left({x}^{2}+xz+xy+{z}^{2}+zy+{y}^{2}\right)^3\\
-&3\left({z}^{3}+2\,x{z}^{2}+2\,{x}^{2}z+{x}^{3}+2\,{z}^{2}y+2\,zyx+2\,{x}^{2}y
+2\,z{y}^{2}+2\,x{y}^{2}+{y}^{3}
\right)^2.
\end{split}
\end{equation}
In this case, the forms obtained using these pencils generate $H^1(X;\bc_{\xi_6})$.
This example can also be found in~\cite{CL-prep}, where an infinite number of decompositions
of type $fh^6=g_3^2+g_3^2$ are described. Such decompositions are called \emph{quasi-toric}
decompositions and they produce morphisms onto $\bp^1_{(2,3,6)}$.

This illustrates Remark~\ref{rem-k+l}, since for instance 
$V_1=V_2=\{\xi_6\}\subset \Sigma_1(X)$ can be chosen such that $V_1$ (resp. $V_2$) is associated 
with the orbifold map coming from~\eqref{eq-23-1} (resp.~\eqref{eq-23-2}). This automatically 
implies that $\xi_6\in \Sigma_2(X)$. 

Note that $\bt_\Pi=\cup_{\lambda} \bt_\Pi^{\lambda}$, where $\Pi=\pi^{\text{\rm orb}}(\bc_{2,3})=\bff^0_{2,3}$,
$\lambda=(\lambda_1,\lambda_2)\in C_2\times C_3$, and $\bt_\Pi^{(1,1)}=\{\one\}$. According
to Proposition~\ref{charvar-libre}, 
$\Sigma_1(\Pi)=\bt_\Pi^{(-1,\xi_3)} \cup \bt_\Pi^{(-1,\bar \xi_3)}=\{\xi_6, \bar \xi_6\}$ and
$\Sigma_2(\Pi)=\emptyset$ ($\xi_n:=\exp\left(\frac{2\pi \sqrt{-1}}{n}\right)$).

Summarizing, $\xi_6$ as an element of $\Sigma_1$ is of type~\ref{thmprin-orb}. 
However, if we consider $\xi_6$ as an element of $\Sigma_2$, then it is of type~\ref{thmprin-tors}.
\end{example}

\begin{example}
\label{ex-aarhus}
The affine Degtyarev curve $C\subset \bc^2$ in~\cite[Section~3]{AC-prep} provides an example of a space 
$X=\bc^2\setminus C$ such that $\Sigma_1(X)=\{\xi \mid \xi^4-\xi^3+\xi^2-\xi+1=0\} \cup \{\one\}$,
but $\xi$ is of type~\ref{thmprin-tors} in~Theorem~\ref{thmprin}. Note that, in this case 
$\xi$ is a torsion character and both $b_1(X^{\xi})=b_1(\bar X)=0$, which is necessary 
by Remark~\ref{rem-4.1}.
\end{example}

\begin{example}
Consider the following arrangement 
(cf.~\cite{hirzebruch-geraden}) given by equations 
$C_n:=(y^n-x^n)(y^n-z^n)(z^n-x^n)$, $n\geq 2$, where
$$
\begin{matrix}
\ell_{1+3k}:=y-\xi_n^k z, &
\ell_{2+3k}:=z-\xi_n^k x, &
\ell_{3+3k}:=y-\xi_n^k x.
\end{matrix}
$$ 
This arrangement can be seen as the Kummer covering 
$[x:y:z]\mapsto [x^n:y^n:z^n]$ of a Ceva arrangement ramified 
along $\{xyz=0\}$. It has been considered in~\cite{cohen-torsion,cohen-triples,Di4}
where mainly the components in $\Sigma_1$ have been accounted for. 
Here we interpret the essential components in $\Sigma_k$ for $k>1$.

Note the following pencils associated with the arrangement $C_n$:
$$
F_{\alpha,\beta}=\alpha f_1+\beta f_2:=
\alpha x^n \prod_{k=1}^n \ell_{1+3k}+ 
\beta y^n \prod_{k=1}^n \ell_{2+3k} = 
\alpha x^n(y^n-z^n)+\beta y^n(z^n-x^n),
$$
where 
$$
f_1+f_2=z^n(y^n-x^n)=
z^n \prod_{k=1}^n \ell_{3+3k},
$$
and
$$
G_{\alpha,\beta}=
\alpha g_1+\beta g_2:=
\alpha \prod_{k=1}^n \ell_{1+3k}+ 
\beta \prod_{k=1}^n \ell_{2+3k}
=\alpha (y^n-z^n)+\beta (z^n-x^n),
$$
where
$$
g_1+g_2=(y^n-x^n)=
\prod_{k=1}^n \ell_{3+3k}.
$$
The rational maps
$f_i:\mathbb P^2 \dasharrow \mathbb P^1$, $i=1,2$ defined by
$f_1([x:y:z]):=[f_1:f_2]$, $f_2([x:y:z]):=[g_1:g_2]$, 
are such that 
$\{xyz=0\}\cup C_n=f_1^{-1}(\{[0:1],[1:0],[1:-1]\})$
and
$C_n=f_2^{-1}(\{[0:1],[1:0],[1:-1]\})$.
Moreover, after resolving the base points of the rational
maps one obtains two morphisms:
$$\tilde f_1:X_n\to \Omega_1:=\mathbb P^1_{n,n,n},$$
$$\tilde f_2:X_n\to 
\Omega_2:=\mathbb P^1\setminus \{[0:1],[1:0],[1:-1]\},$$
where $X_n=\mathbb P^2\setminus C_n$.
By Proposition~\ref{charvar-compacto}, 
$\Sigma_1(\Omega_1)=\{\lambda=(\xi_n^i,\xi_n^j,\xi_n^{-(i+j)})\mid
\ell(\lambda)\geq 3\}\supset \Sigma_2(\Omega_1)=\emptyset$,
$\Sigma_1(\Omega_2)=\mathbb T_{\mathbb F^2}
\supset \Sigma_2(\Omega_2)=\emptyset$.
The injection 
$\tilde f_1^*(\Sigma_1(\Omega_1))\subset \Sigma_1(X_n)$
produces a zero-dimensional embedded component inside the one-dimensional
component $\tilde f_2^*(\Sigma_2(\Omega_2))\subset \Sigma_2(X_n)$
(except for $n=2$). Using Proposition~\ref{prop-suma} one can deduce that 
in fact $\tilde f_1^*(\Sigma_1(\Omega_1))\subset \Sigma_2(X)$. This was 
already pointed out in~\cite{Di4}, but here we give a different approach.
\end{example}


\providecommand{\bysame}{\leavevmode\hbox to3em{\hrulefill}\thinspace}
\providecommand{\MR}{\relax\ifhmode\unskip\space\fi MR }
\providecommand{\MRhref}[2]{%
  \href{http://www.ams.org/mathscinet-getitem?mr=#1}{#2}
}
\providecommand{\href}[2]{#2}

\end{document}